\newtheorem{thm}{Theorem}[section]
\newtheorem{fact}[thm]{Fact}
\newtheorem{prop}[thm]{Proposition}
\newtheorem{lemma}[thm]{Lemma}
\newtheorem{cor}[thm]{Corollary}
\newtheorem{question}{Question}
\newtheorem*{thm*}{Theorem}
\theoremstyle{definition}
\newtheorem{defin}[thm]{Definition}
\theoremstyle{remark}
\newtheorem{remark}[thm]{Remark}
\newcommand{\setsep}{:\;}
\newcommand{\floor}[1]{\lfloor #1 \rfloor}
\newcommand{\Lip}{\operatorname{Lip}}
\newcommand{\Span}{\operatorname{span}}
\newcommand{\Id}{\operatorname{Id}}
\newcommand{\Ext}{\operatorname{\operatorname{Ext}^{pt}_0}}
\newcommand{\Exto}{\operatorname{\operatorname{Ext}_0}}
\newcommand{\Rea}{\mathbb{R}}
\newcommand{\Nat}{\mathbb{N}}
\newcommand{\zet}{\mathbb{Z}}
\newcommand{\U}{\mathcal{U}}
\newcommand{\F}{\mathcal{F}}
\newcommand{\ex}{\mathbb{X}}
\newcommand{\Net}{\mathcal{N}}
\newcommand{\complemented}{\stackrel{c}{\hookrightarrow}}
\begin{document}
\title[Isomorphic spaces of Lipschitz functions]{Isomorphisms between spaces of Lipschitz functions}

\author[L. Candido]{Leandro Candido}
\address{Universidade Federal de S\~ao Paulo - UNIFESP. Instituto de Ci\^encia e Tecnologia. Departamento de Matem\'atica. S\~ao Jos\'e dos Campos - SP, Brasil}
\email{leandro.candido@unifesp.br}
\author[M. C\'uth]{Marek C\'uth}
\address{Faculty of Mathematics and Physics, Department of Mathematical Analysis\\
Charles University\\ 
186 75 Praha 8\\
Czech Republic}
\email{cuth@karlin.mff.cuni.cz}
\author[M. Doucha]{Michal Doucha}
\address{Institute of Mathematics\\
 Czech Academy of Sciences\\
115 67 Praha 1\\
Czech Republic}
\email{doucha@math.cas.cz}

\subjclass[2010]{46B03 (primary), and 22E40 (secondary)}

\thanks{L. Candido was supported by FAPESP No. 2016/25574-8. M. C\'uth has been supported by Charles University Research program No. UNCE/SCI/023 and by the Research grant GA\v{C}R 17-04197Y. M. Doucha was supported by the GA\v CR project 16-34860L and RVO: 67985840.}

\keywords{Lipschitz functions, isomorphisms between Banach spaces, Carnot groups, Lipschitz-free spaces, Orlicz spaces}

\begin{abstract}We develop tools for proving isomorphisms of normed spaces of Lipschitz functions over various doubling metric spaces and Banach spaces. In particular, we show that $\Lip_0(\zet^d)\simeq\Lip_0(\Rea^d)$, for all $d\in\Nat$. More generally, we e.g. show that $\Lip_0(\Gamma)\simeq \Lip_0(G)$, where $\Gamma$ is from a large class of finitely generated nilpotent groups and $G$ is its Mal'cev closure; or that $\Lip_0(\ell_p)\simeq\Lip_0(L_p)$, for all $1\leq p<\infty$.

We leave a large area for further possible research.
\end{abstract}
\maketitle

\section*{Introduction}
The goal of the paper is to develop and apply tools that in some situations allow us to identify, up to isomorphism, two normed spaces of Lipschitz functions over two different metric spaces - provided that these metric spaces share a similar geometry when one looks only at their finite subsets. A canonical result of this type, which stimulated our more general considerations, is that the spaces of Lipschitz functions vanishing at zero over $\zet^d$ and $\Rea^d$ are isomorphic, for any $d\in\Nat$. The tools that we develop along the way, proving that result, have much greater applicability. We use them to investigate spaces of Lipschitz functions over spaces coming both from finite-dimensional (sub-)Riemannian geometry, such as various Lie groups, and from infinite-dimensional functional analysis, such as various Banach spaces.
\subsection*{Spaces of Lipschitz functions.}
Let $M$ be a metric space, often considered with a distinguished point that is usually denoted by $0$, and consider the set of all real-valued Lipschitz functions on $M$, denoted by $\Lip(M)$, and the set of all real-valued Lipschitz functions on $M$ that vanish at $0$, denoted by $\Lip_0(M)$. These spaces have an obvious vector space structure and one can equip them with the (pseudo-)norm $\|\cdot\|_{\Lip}$ of `smallest Lipschitz constant', i.e. $$\|f\|_{\Lip}=\sup_{x\neq y\in M} \frac{|f(x)-f(y)|}{d(x,y)},$$ for $f\in\Lip(M)$. The space $(\Lip_0(M),\|\cdot\|_{\Lip})$ then becomes a Banach space, while $\Lip(M)$, because of the constant functions on which $\|\cdot\|_{\Lip}$ vanishes, is more often considered with the norm $\|\cdot\|_{\Lip}+|f(0)|$; then it becomes a Banach space as well.

There is an apparent relation of these spaces to spaces of differentiable functions on $\Rea^n$ as $\|f'\|_\infty$ is nothing but $\|f\|_{\Lip}$. However in our setting we can work with metric spaces that have no obvious differentiable structure. Let us note that spaces of Lipschitz functions are widely studied in applied mathematics, they naturally appear in many areas of computer science, including for example in machine learning (see e.g. \cite{vLBou}, or other work of these authors). We also note that the spaces $\Lip(M)$ and $\Lip_0(M)$ are also studied when equipped with a richer structure on them than just Banach space structure. They have the lattice structure (although they are not Banach lattices since the Lipschitz norm is not increasing on the positive cone), and when $M$ is bounded, they are also topological algebras (they \emph{almost} are Banach algebras with their norm and are sometimes called \emph{weak Banach algebras}); we refer the reader to the monograph \cite{Weaver}.

In Banach space theory, $\Lip_0(M)$ has been more studied recently because of its canonical predual space, the \emph{Lipschitz-free space} over $M$ (also called the \emph{Arens-Eells space}), denoted further by $\F(M)$. This is a Banach space together with an isometric embedding, preserving $0$, $\delta:M\rightarrow \F(M)$, characterized by the universal property that for any Lipschitz function $f:M\rightarrow X$ to a Banach space preserving $0$ there is a unique extension $F:\F(M)\rightarrow X$ with $\|F\|=\|f\|_{\Lip}$ satisfying $f=F\circ\delta$. This property also allows one to isometrically identify $\Lip_0(M)$ with the dual Banach space $\F(M)^*$ - a fact that we will use.
\medskip

Let us provide some sample theorems that we prove here.
\begin{thm*}
For any $d\in\Nat$, we have $\Lip_0(\zet^d)\simeq \Lip_0(\Rea^d)$.

More generally, let $\Gamma$ be a finitely generated nilpotent torsion-free group equipped with its word metric and let $G$ be its Mal'cev closure, the simply connected Lie group containing $\Gamma$ as a discrete subgroup which is also a net. Suppose that $G$ is Carnot and it is equipped with its Carnot-Carath\' edory distance. Then $\Lip_0(\Gamma)\simeq \Lip_0(G)$
\end{thm*}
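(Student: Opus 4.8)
The plan is to realize both $\Lip_0(\Gamma)$ and $\Lip_0(G)$ through a single decomposition and then close with a Pe\l czy\'nski-type argument. First I would dispose of the two competing metrics on $\Gamma$. Since $\Gamma$ is a net in $G$, the inclusion of $(\Gamma,d_{\mathrm{word}})$ into $(G,d_{CC})$ is a quasi-isometry; as $\Gamma$ is uniformly discrete the additive constant is harmless on $\Gamma$, so the identity map $(\Gamma,d_{\mathrm{word}})\to(\Gamma,d_{CC}|_\Gamma)$ is bi-Lipschitz. A bi-Lipschitz bijection induces an isomorphism of the corresponding $\Lip_0$ spaces, so I may henceforth treat $\Gamma$ as a net inside $(G,d_{CC})$ with the induced metric. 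I also record the only feature of $G$ that will be used: a Carnot group with its Carnot--Carath\'eodory distance is Ahlfors regular of its homogeneous dimension, hence a doubling metric space.

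Next I would project $\Lip_0(G)$ onto a copy of $\Lip_0(\Gamma)$. Restriction $R\colon\Lip_0(G)\to\Lip_0(\Gamma)$, $Rf=f|_\Gamma$, is a norm-one operator. Because $G$ is doubling, the Lee--Naor linear extension theorem supplies a bounded linear $E\colon\Lip_0(\Gamma)\to\Lip_0(G)$ with $(Ef)|_\Gamma=f$, i.e. $RE=\Id$. Then $P:=ER$ is a bounded idempotent on $\Lip_0(G)$ whose range is isomorphic to $\Lip_0(\Gamma)$, and since $E$ is injective $\ker P=\{f\in\Lip_0(G):f|_\Gamma=0\}=:Y$. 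Thus
\[
\Lip_0(G)\simeq\Lip_0(\Gamma)\oplus Y.
\]

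I would then identify $Y$ by tiling. Cocompactness of $\Gamma$ yields a uniformly bounded Borel tiling $G=\bigcup_{\gamma\in\Gamma}K_\gamma$ with $\gamma\in K_\gamma$, the tiles being uniformly bi-Lipschitz to a fixed compact fundamental domain $K$. A function $f\in Y$ vanishes at every $\gamma$, so each restriction $f|_{K_\gamma}$ lies in $\Lip_0(K_\gamma)$ based at $\gamma$; conversely, using a Lipschitz partition of unity subordinate to a bounded-overlap refinement of the tiling -- again available by doubling -- one reassembles a uniformly Lipschitz family $(f_\gamma)_\gamma$ into a single Lipschitz function, the norm being comparable to $\sup_\gamma\|f_\gamma\|_{\Lip}$. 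This gives
\[
Y\simeq\Big(\sum_{\gamma\in\Gamma}\Lip_0(K_\gamma)\Big)_{\ell_\infty}\simeq\ell_\infty\bigl(\Gamma,\Lip_0(K)\bigr).
\]

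The main obstacle is the absorption step: I must show $\Lip_0(\Gamma)\simeq\Lip_0(\Gamma)\oplus Y$, for then $\Lip_0(G)\simeq\Lip_0(\Gamma)\oplus Y\simeq\Lip_0(\Gamma)$ and the theorem follows. Here I would exploit the Carnot dilations, which make $G$ self-similar at every scale: performing the same tiling at coarser scales should realize $Y$ as a complemented subspace of $\Lip_0(\Gamma)$ and simultaneously endow both $\Lip_0(\Gamma)$ and $Y$ with the self-absorbing property $Z\simeq(\sum_n Z)_{\ell_\infty}$, after which a Pe\l czy\'nski decomposition delivers the absorption. The delicate point, on which the whole argument rests, is precisely to reproduce the local continuum summand $\Lip_0(K)$ inside the discrete space $\Lip_0(\Gamma)$ -- in effect a bounded, bootstrapped instance of the theorem itself.
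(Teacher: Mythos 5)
Your reduction to the restricted Carnot--Carath\'eodory metric and the use of the Lee--Naor extension to write $\Lip_0(G)\simeq\Lip_0(\Gamma)\oplus Y$ with $Y=\{f\in\Lip_0(G)\setsep f|_\Gamma=0\}$ are both sound and match the paper's first steps (Lemma~\ref{lem:bilipofmetrics} and Proposition~\ref{prop:complInRd}). The trouble begins with the tiling step. The map $f\mapsto(f|_{K_\gamma})_\gamma$ is a bounded injection of $Y$ into $\bigl(\sum_\gamma\Lip_0(K_\gamma)\bigr)_{\ell_\infty}$, but it is far from surjective: any family in its image must agree on the overlaps of adjacent tiles, and the partition-of-unity average $\sum_\gamma\varphi_\gamma f_\gamma$ does not restrict to $f_\gamma$ on $K_\gamma$, so it is not a right inverse of the tile-restriction map. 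Hence the claimed identification $Y\simeq\ell_\infty(\Gamma,\Lip_0(K))$ is not established; at best one could hope that $Y$ is complemented in that sum, and even that would require an argument you have not given.

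More seriously, the absorption step $\Lip_0(\Gamma)\simeq\Lip_0(\Gamma)\oplus Y$, which you correctly identify as the crux, is not carried out, and as you concede it amounts to ``a bootstrapped instance of the theorem itself'': the entire difficulty is to reproduce a continuum-sized summand such as $\Lip_0(K)$ complementably inside the discrete space $\Lip_0(\Gamma)$. The paper's solution to exactly this point is the Key Lemma~\ref{lem:key}: one restricts $f\in\Lip_0(G)$ to the rescaled copies $\lambda^{-n}\Gamma$, embedded in $G$ via the Carnot dilations so as to become dense as $n\to\infty$, and inverts by an ultrafilter limit $S((f_n))(x)=\lim_\U f_n(x_n)$; this yields $\Lip_0(G)\complemented\bigoplus_{\ell_\infty}\Lip_0(\Gamma)$ and is the device that transfers the continuum into the discrete side. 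It has no counterpart in your argument. The remaining ingredient, $\Lip_0(\Gamma)\simeq\bigoplus_{\ell_\infty}\Lip_0(\Gamma)$ and likewise for $G$, is also nontrivial: the paper obtains it in Theorem~\ref{thm:isoToEllInftySum} by moving balls $B(0,r_n)$ into well-separated annuli (using homogeneity, respectively dilations, together with the uniformly bounded extensions from Proposition~\ref{prop:complInRd}) and invoking Kalton's decomposition (Proposition~\ref{prop:kalton}); asserting it from ``self-similarity'' alone leaves a real gap. With those two pieces in hand, Pe\l czy\'nski's method closes the proof without ever needing a tiling decomposition of $Y$.
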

Carnot groups appearing in the previous theorem can be seen as a non-commutative generalization of finite-dimensional Banach spaces. In particular, they admit dilations for any positive factor $\lambda$, they are doubling, homogeneous and geodesic - geometric properties that we use in our proofs. It turns out that these properties characterize Carnot groups among metric spaces (\cite{lD}).

Besides these finite-dimensional spaces which enjoy the doubling condition, essential in many arguments, we can still derive many results when the metric spaces in question are Banach spaces. Some sample results are as follows.

\begin{thm*}
For all $1\leq p<\infty$, we have $\Lip_0(\ell_p)\simeq \Lip_0(L_p)$.
Moreover,
\begin{itemize}
\item Let $X$ be a separable infinite dimensional $\mathcal{L}_p$-space for some $p\in [1,\infty]$. Then $\Lip_0(X)\simeq \Lip_0(\ell_p)$ if $p<\infty$, and $\Lip_0(X)\simeq \Lip_0(c_0)$ if $p=\infty$.
\item If $1<p<\infty$, then there is a sequence $(X_n)_{n=1}^\infty$ of Banach spaces, each of which isomorphic to 
$\ell_2$, such that $\Lip_0(L_p)\simeq\bigoplus_{\ell_\infty} \Lip_0(X_n).$
\item For a Young function $\Phi$ satisfying the $\Delta_2$-condition and any sequence $(\alpha_n)_{n=1}^{\infty}$ of positive reals converging to $0$, $\Lip_0(L_\Phi(\Rea))\simeq \bigoplus_{\ell_\infty} \Lip_0(\ell_{\alpha_n\Phi})$, where $\ell_{\alpha_n\Phi}$ is the Orlicz sequence space associated to the function $\alpha_n\Phi$.
\end{itemize}
\end{thm*}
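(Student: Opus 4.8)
The plan is to treat all four assertions as consequences of two engines that I would set up beforehand: a \emph{scale decomposition} expressing $\Lip_0$ of an ambient (non-doubling) Banach space as an $\ell_\infty$-sum $\bigoplus_{\ell_\infty}\Lip_0(K_n)$ of Lipschitz spaces over simpler pieces $K_n$ that exhaust the space at geometric scales, together with a Pe\l czy\'nski decomposition method for $\Lip_0$-spaces: once I know that $\Lip_0(M)$ embeds complementably into $\Lip_0(N)$ and vice versa, and that each of these spaces absorbs its own countable $\ell_\infty$-sum (i.e. $\Lip_0(M)\simeq\bigoplus_{\ell_\infty}\Lip_0(M)$), I can conclude $\Lip_0(M)\simeq\Lip_0(N)$. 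The dual viewpoint is convenient: since $\Lip_0(M)=\F(M)^*$, an $\ell_\infty$-sum of Lipschitz spaces is the dual of the corresponding $\ell_1$-sum of free spaces, so the decompositions can equally be read as $\ell_1$-decompositions of $\F$.

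First I would observe that the opening assertion $\Lip_0(\ell_p)\simeq\Lip_0(L_p)$ is a special case of the $\mathcal{L}_p$-space statement, because both $\ell_p$ and $L_p$ are separable infinite-dimensional $\mathcal{L}_p$-spaces; the same remark with $c_0$ in place of $\ell_p$ covers the endpoint $p=\infty$. So the heart of the matter is that for every separable infinite-dimensional $\mathcal{L}_{p,\lambda}$-space $X$ one has $\Lip_0(X)\simeq\Lip_0(\ell_p)$. Here I would exploit the defining local structure of $\mathcal{L}_p$-spaces: $X$ contains an increasing chain of finite-dimensional subspaces, each $\lambda$-isomorphic to some $\ell_p^{d_n}$, with dense union. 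Feeding this chain into the scale-decomposition engine expresses both $\Lip_0(X)$ and $\Lip_0(\ell_p)$ as $\ell_\infty$-sums over pieces whose geometry is, uniformly in $n$, that of balls in $\ell_p^{d_n}$; these uniform bi-Lipschitz matchings of the pieces, implemented by Lipschitz extension and retraction operators, give the mutual complementations, and the Pe\l czy\'nski step finishes the isomorphism. The main obstacle I anticipate lies precisely here: $X$ need not be linearly isomorphic to $\ell_p$, so the isomorphism of $\Lip_0$-spaces must be synthesized entirely from \emph{local} ($\ell_p^n$-level) data, and I have to control the bi-Lipschitz and complementation constants uniformly across scales while simultaneously handling base-points and gluing the local extension operators into one globally bounded operator.

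For the third assertion I would again run the scale-decomposition engine on $L_p$ for $1<p<\infty$, this time arranging the decomposition so that the local models $X_n$ carry Hilbertian geometry and are therefore each isomorphic to $\ell_2$ (it is natural here to build the pieces on almost-Euclidean sections of $L_p$, whose dimension I let grow with the scale). The reason one cannot collapse $\bigoplus_{\ell_\infty}\Lip_0(X_n)$ to a single $\Lip_0(\ell_2)$ is that the isomorphism constants between $X_n$ and $\ell_2$ deteriorate with $n$, so the honest output is an $\ell_\infty$-sum of spaces \emph{isomorphic}, but not isometric, to $\ell_2$; tracking this deterioration is the delicate point of this part.

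Finally, the Orlicz statement I would prove by the same scheme applied to $L_\Phi(\Rea)$, but with the scales coming from \emph{domain} dilations $f(t)\mapsto f(t/\lambda)$ rather than scalar multiples: a direct computation with the Luxemburg norm shows that such a dilation converts the $L_\Phi$-norm into the norm governed by a rescaled Young function, which is exactly the mechanism producing the family $\ell_{\alpha_n\Phi}$ with $\alpha_n\to 0$ as the local models. The $\Delta_2$-condition enters to guarantee separability of the relevant Orlicz spaces, the density needed for the exhaustion, and the uniform two-sided norm comparisons that make the $\ell_\infty$-sum converge and the decomposition engine applicable. As in the other parts, the genuine work is the uniform control of constants across the infinitely many scales; everything else is bookkeeping with extension operators and the Pe\l czy\'nski method.
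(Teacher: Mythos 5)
Your treatment of the first, second and fourth assertions follows essentially the same route as the paper. The $\mathcal{L}_p$-statement is proved exactly as in Theorem~\ref{thm:scriptLp}: an increasing chain of finite-dimensional subspaces with dense union, the Key Lemma to get $\Lip_0(X)\complemented\bigoplus_{\ell_\infty}\Lip_0(X_n)$, uniform complementation of the pieces, Kaufmann's theorem and the Pe\l czy\'nski decomposition method; the case of $\ell_p$ versus $L_p$ is then a special instance, and your Orlicz decomposition is the paper's Lemma~\ref{lem:Orliczgoodpair} in different words (isometric embeddings of $\ell_{\alpha_n\Phi}$ as step functions of width $\alpha_n$, conditional expectations for the reverse complementation). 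One point you gloss over in the $\mathcal{L}_p$-case: the definition of an $\mathcal{L}_p$-space hands you subspaces uniformly isomorphic to $\ell_p^n$ but not uniformly \emph{complemented} ones, and the direction $\Lip_0(\ell_p)\complemented\Lip_0(X)$ requires the Pe\l czy\'nski--Rosenthal localization theorem to supply uniformly complemented copies of $\ell_p^{k_n}$ inside $X$. This is a citable classical result, so it is a presentational rather than a mathematical gap.

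The third assertion is where your plan genuinely breaks down. You propose to run the scale decomposition on $L_p$ itself with local models built on almost-Euclidean sections of growing dimension. First, the theorem requires each $X_n$ to be isomorphic to $\ell_2$, hence infinite dimensional, which finite-dimensional Dvoretzky sections cannot provide. More seriously, the decomposition engine needs the pieces to approximate every point of $L_p$ while the embeddings and their inverses keep uniformly bounded (coarse) Lipschitz constants. If an increasing family of subspaces of $L_p$ with dense union were uniformly Euclidean, every finite-dimensional subspace of $L_p$ would be uniformly close to a Hilbertian one, so $L_p$ would be crudely finitely representable in $\ell_2$ and hence, by Kwapie\'n's theorem, isomorphic to a Hilbert space --- false for $p\neq 2$. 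If instead you let the Euclidean constants deteriorate with $n$, as you concede they must, the uniform-constant hypothesis of the Key Lemma fails and the complementation $\Lip_0(L_p)\complemented\bigoplus_{\ell_\infty}\Lip_0(X_n)$ is no longer produced. The missing idea is not a scale decomposition of $L_p$ at all but a change of ambient space: replace $t^p$ by the hybrid Young function $\Psi$ equal to $t^2$ near $0$ and equivalent to $t^p$ near $\infty$, invoke the Johnson--Maurey--Schechtman--Tzafriri theorem to obtain the linear isomorphism $L_p[0,1]\simeq L_\Psi[0,\infty)$, apply your (correct) fourth assertion to $\Psi$ with $\alpha_n=1/n$, and finally use that an Orlicz sequence space depends only on the germ of its Young function at $0$, so that $\ell_{(1/n)\Psi}\simeq\ell_2$. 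Without this detour through $L_\Psi[0,\infty)$ the Hilbertian pieces have no reason to exhaust $L_p$.
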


\subsection*{Preliminaries and notations}

The notation and terminology we use are relatively standard. If $X$ is a topological space and $A\subset X$, $\overline{A}$ stands for the closure of $A$. If $(M,d)$ is a metric space, $x\in M$ and $r\geq 0$, we use $B(x,r)$ to denote the closed ball $\{y\in M\setsep d(x,y)\leq r\}$. A metric space $(M,d)$ is \emph{uniformly discrete} if it is $\varepsilon$-separated for some $\varepsilon > 0$, that is, $d(x,y)\geq \varepsilon$ for every $x,y\in M$, $x\neq y$. A subset $N$ of a metric space $(M,d)$ is \emph{$\varepsilon$-dense}, if for each $x\in M$ there is $y\in N$ with $d(x,y)<\varepsilon$. A \emph{net} in a metric space is a subset which is $\varepsilon$-separated and $\delta$-dense in $M$ for some $\varepsilon, \delta > 0$. If $X$ and $Y$ are Banach spaces, the symbol $Y\complemented X$ means that $Y$ is linearly isomorphic to a complemented subspace of $X$. The symbol $Y\simeq X$ means that $Y$ is linearly isomorphic to $X$.

Our results concerning isomorphisms between spaces of Lipschitz functions are always obtained using the following special case of the standard Pe\l czy\'nski decomposition method proved originally in \cite{P60}. We state it here for the convenience of the reader.
\begin{thm}\label{t:pel}Let $X$, $Y$ be Banach spaces with $X\simeq\bigoplus_{\ell_\infty} X$, $Y\complemented X$ and $X\complemented Y$. Then $X\simeq Y$.
\end{thm}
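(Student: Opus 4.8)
The plan is to prove this classical decomposition statement by exhibiting both $X$ and $Y$ as isomorphic to a single ``master space'' $Z:=\bigoplus_{\ell_\infty} X$, which by hypothesis already satisfies $X\simeq Z$. The whole argument rests on one absorption property of $Z$: any Banach space $W$ that is isomorphic to a complemented subspace of $Z$ is \emph{absorbed} by $Z$, in the sense that $Z\oplus W\simeq Z$. Once this is established, the theorem falls out in essentially one line.

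First I would record the soft structural facts about the $\ell_\infty$-sum that will be used repeatedly, all of which follow from rearranging and regrouping summands: the sum is, up to isomorphism, associative and commutative; one has $\bigoplus_{\ell_\infty}(V\oplus W)\simeq\bigl(\bigoplus_{\ell_\infty}V\bigr)\oplus\bigl(\bigoplus_{\ell_\infty}W\bigr)$; a single copy is absorbed by its own infinite sum, $W\oplus\bigoplus_{\ell_\infty}W\simeq\bigoplus_{\ell_\infty}W$; and, crucially, applying the $\ell_\infty$-sum to the isomorphism $X\simeq Z$ yields $Z=\bigoplus_{\ell_\infty}X\simeq\bigoplus_{\ell_\infty}Z$, so that $Z$ itself again satisfies the infinite-sum hypothesis. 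I will also use that a complemented subspace of $X$ is automatically a complemented subspace of $Z$, since $X$ is itself a complemented summand of $Z$.

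Next I would prove the absorption lemma. Suppose $W\complemented Z$, so that $Z\simeq W\oplus V$ for some $V$. Applying the $\ell_\infty$-sum and the structural facts gives $Z\simeq\bigoplus_{\ell_\infty}Z\simeq\bigoplus_{\ell_\infty}(W\oplus V)\simeq\bigl(\bigoplus_{\ell_\infty}W\bigr)\oplus\bigl(\bigoplus_{\ell_\infty}V\bigr)$. Using the single-copy absorption I can then compute
\[
Z\oplus W\simeq\Bigl(\bigoplus_{\ell_\infty}W\Bigr)\oplus\Bigl(\bigoplus_{\ell_\infty}V\Bigr)\oplus W\simeq\Bigl(\bigoplus_{\ell_\infty}W\Bigr)\oplus\Bigl(\bigoplus_{\ell_\infty}V\Bigr)\simeq Z,
\]
which is the desired absorption.

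Finally I would assemble the pieces. Since $X\complemented Y$, write $Y\simeq X\oplus B$. The summand $B$ is complemented in $Y$, and $Y$ is complemented in $X\simeq Z$, so $B\complemented Z$. Using $X\simeq Z$ and then the absorption lemma with $W=B$,
\[
Y\simeq X\oplus B\simeq Z\oplus B\simeq Z\simeq X,
\]
which is exactly $X\simeq Y$. I do not expect a genuine obstacle, since the theorem is classical; the only point demanding care is the bookkeeping of the second paragraph, namely verifying that the passage from $X\simeq Z$ to $Z\simeq\bigoplus_{\ell_\infty}Z$ together with the regrouping identities is valid for the $\ell_\infty$-sum. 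After that, the absorption lemma and the final chain of isomorphisms are immediate.
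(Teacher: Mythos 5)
Your proof is correct. The paper itself gives no proof of this statement, citing Pe\l czy\'nski's original article [P60], so there is nothing to compare against; your argument is the classical decomposition-method proof, packaged slightly more efficiently than usual: by isolating the absorption property $Z\oplus W\simeq Z$ for $W\complemented Z$ (where $Z=\bigoplus_{\ell_\infty}X$), you need only one application of the ``accordion'' trick rather than the usual two chains $X\simeq X\oplus Y$ and $Y\simeq X\oplus Y$, and all the regrouping identities you invoke for $\ell_\infty$-sums are indeed valid.
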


In order to satisfy the assumption ``$X\simeq\bigoplus_{\ell_\infty} X$'' from Theorem~\ref{t:pel}, we use sometimes the following result by Kaufmann, see \cite[Theorem 3.1]{K15}.
\begin{thm}\label{thm:kaufman}Let $X$ be a Banach space. Then $\F(X)\simeq \bigoplus_{\ell_1}\F(X)$. In particular, $\Lip_0(X)\simeq \bigoplus_{\ell_\infty} \Lip_0(X)$.
\end{thm}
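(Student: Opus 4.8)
The plan is to prove the free-space statement $\F(X)\simeq\bigoplus_{\ell_1}\F(X)$ first and to deduce the statement for $\Lip_0(X)$ by duality: since $\Lip_0(M)$ is isometrically $\F(M)^*$ and the dual of a countable $\ell_1$-sum is the corresponding $\ell_\infty$-sum, one gets $\Lip_0(X)=\F(X)^*\simeq(\bigoplus_{\ell_1}\F(X))^*\simeq\bigoplus_{\ell_\infty}\F(X)^*=\bigoplus_{\ell_\infty}\Lip_0(X)$. For the free-space isomorphism the strategy is to apply the ($\ell_1$-)analogue of the Pe\l czy\'nski decomposition in Theorem~\ref{t:pel} with $Y=\F(X)$ and $W=\bigoplus_{\ell_1}\F(X)$: clearly $W\simeq\bigoplus_{\ell_1}W$ and $Y\complemented W$ (a single coordinate is $1$-complemented), so everything reduces to establishing the reverse complementation $W\complemented\F(X)$, i.e. producing inside $\F(X)$ a complemented copy of the $\ell_1$-sum of countably many copies of $\F(X)$.

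To produce it I would use two ingredients. First, a convex-body lemma: for every ball $B=B(x,r)$ with $r>0$ one has $\F(B)\simeq\F(X)$. After translating $B$ to the origin and dilating to $r=1$ (both operations are isometries, resp. bi-Lipschitz self-maps, of $X$ and induce isomorphisms of free spaces), this amounts to $\F(B_X)\simeq\F(X)$ for the unit ball; the point is that although $B_X$ is bounded and hence not bi-Lipschitz to $X$, the two free spaces are still linearly isomorphic, which I would prove by a self-similar decomposition of $X$ into dyadic shells glued along a bounded radial retraction. Second, a separation estimate: if $(B_n)_{n}$ is a family of unit balls whose pairwise distances are large compared with their (uniformly bounded) diameters, then the molecules supported on distinct balls behave like an $\ell_1$-sum, so that $\bigoplus_{\ell_1}\F(B_n)$ embeds as a complemented subspace of $\F(\bigcup_n B_n)$.

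Concretely I would fix a unit vector $e\in X$, set $x_n=cn\,e$ and $B_n=B(x_n,1)$ for a large constant $c$, and put $K=\bigcup_n B_n$; the balls are then uniformly separated with uniformly bounded diameters. Using the radial projection onto each $B_n$ together with a Lipschitz partition of unity along the ray $\Rea e$, I would assemble a single Lipschitz retraction $\rho\colon X\to K$ whose Lipschitz constant is controlled independently of $n$. As usual a Lipschitz retraction induces a linear projection of $\F(X)$ onto an isometric copy of $\F(K)$, giving $\F(K)\complemented\F(X)$. Combining the separation estimate, the convex-body lemma, and the fact that each $B_n$ is a translate of $B_X$ (so $\F(B_n)\simeq\F(B_X)\simeq\F(X)$ with constants uniform in $n$) then yields $\bigoplus_{\ell_1}\F(X)\simeq\bigoplus_{\ell_1}\F(B_n)\complemented\F(K)\complemented\F(X)$, which is exactly the missing complementation $W\complemented\F(X)$.

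The main obstacle is the quantitative construction underlying the two lemmas. On the one hand, the isomorphism $\F(B_X)\simeq\F(X)$ must exploit the homogeneity of $X$ (dilations by arbitrary positive factors) to glue the free spaces of nested shells while keeping the isomorphism constants bounded. On the other hand, the retraction $\rho\colon X\to K$ onto a union of well-separated balls requires verifying that the radial projections and the interpolation across the gaps can be performed with a uniform Lipschitz bound, and that the resulting decomposition of $\F(K)$ genuinely sees the pieces as an $\ell_1$-sum rather than merely as a direct sum. Once these estimates are in place, the decomposition method finishes the free-space isomorphism, and the $\Lip_0$-statement follows by the duality computation above.
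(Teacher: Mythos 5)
This theorem is not proved in the paper at all: it is quoted from Kaufmann \cite[Theorem 3.1]{K15}, so your attempt has to be measured against that argument. Your overall architecture is in fact the right one and matches Kaufmann's: reduce by duality and by the Pe\l czy\'nski scheme to showing $\bigoplus_{\ell_1}\F(X)\complemented\F(X)$, use the (genuinely hard, but true) convex-body lemma $\F(B_X)\simeq\F(X)$, and realize an $\ell_1$-sum of free spaces of balls complementably inside $\F(X)$. The duality step and the reduction are correct.

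Two of your concrete steps fail, however. First, the Lipschitz retraction $\rho\colon X\to K$ with $K=\bigcup_n B(cne,1)$ does not exist: a retraction restricts to the identity on $K$ and is therefore surjective onto $K$, but $X$ is connected while $K$ is a disjoint union of infinitely many balls at mutual distance at least $c-2$, hence disconnected; a continuous image of $X$ lands in a single ball. So you cannot get $\F(K)\complemented\F(X)$ from Fact~\ref{fact:retractionImpliesComplementedLipSpaces}; what is actually used is a weak*-continuous linear extension operator from $\{0\}\cup K$ built by radial interpolation across the gaps (this is what condition \eqref{cond:extensionFromCoronas} of Theorem~\ref{thm:isoToEllInftySum} encodes, and it is the part of \cite{K15} the paper refers to). Second, your ``separation estimate'' is false as stated: pairwise distances large compared with the diameters do not make $\F(\{0\}\cup\bigcup_n B_n)$ see the $B_n$ as an $\ell_1$-sum. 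With $x_n=cne$ the molecule $\delta(x_n)-\delta(x_{n+1})$ has norm $\approx c$ in $\F(\{0\}\cup K)$, while its image under the coordinatewise decomposition has norm $\approx\|\delta(x_n)\|+\|\delta(x_{n+1})\|\approx 2cn$, so the decomposition map is unbounded. What is needed is \emph{multiplicative} separation relative to the base point, i.e.\ Kalton's condition $\inf_k(s_{k+1}-r_k)>0$ for annuli $2^{s_k}<\|x\|\le 2^{r_k}$ (Proposition~\ref{prop:kalton}); concretely the balls must be exponentially spaced with radii growing proportionally, which is harmless because $\F(B(x,r))\simeq\F(B_X)\simeq\F(X)$ with constants independent of $x$ and $r$ by translation and dilation. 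With these two repairs --- exponentially separated annuli, Kalton's lemma for the $\ell_1$-decomposition, the $2$-Lipschitz radial retraction of each annulus onto an inscribed ball, and a radial linear extension operator in place of your global retraction --- your outline becomes Kaufmann's proof.
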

\noindent Note that the ``In particular'' part of Theorem~\ref{thm:kaufman} is further generalized in our Theorem~\ref{thm:isoToEllInftySum}.

Finally, we use the following two basic strategies in order to find a complemented copy of a $\Lip_0(N)$ space in $\Lip_0(M)$. The first one is a standard way of constructing projections, which we formulate here for the purpose of a further reference. The proof is easy, standard and so we omit it.

\begin{fact}\label{fact:findingProjection}Let $X$, $Y$ be Banach spaces and $S:X\to Y$, $T:Y\to X$ be bounded linear operators with $S\circ T = \Id$. Then $Y\complemented X$. Moreover, $T$ is an isomorphism, $T\circ S$ is a projection onto $T(Y)$ and $Y$ is $\|S\|\|T\|$-isomorphic to a $\|S\|\|T\|$-complemented subspace of $X$.
\end{fact}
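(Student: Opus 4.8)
The plan is to verify the four assertions in sequence, each following quickly from the single algebraic identity $S\circ T=\Id$. The whole content is elementary, so I would organize it so that the quantitative constants fall out for free.

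First I would show that $T$ is an isomorphism onto its image. Since for every $y\in Y$ we have $y=S(T(y))$, the estimate $\|y\|=\|S(T(y))\|\leq\|S\|\,\|T(y)\|$ gives $\|T(y)\|\geq\|y\|/\|S\|$; combined with $\|T(y)\|\leq\|T\|\,\|y\|$ this shows that $T$ is bounded below, hence a linear isomorphism from $Y$ onto the subspace $T(Y)$. In fact the inverse of $T\colon Y\to T(Y)$ is exactly the restriction $S|_{T(Y)}$, because $T(y)\mapsto S(T(y))=y$; thus $\|T^{-1}\|\leq\|S\|$ and the isomorphism constant $\|T\|\,\|T^{-1}\|$ is at most $\|S\|\,\|T\|$.

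Next I would set $P:=T\circ S\colon X\to X$ and check that it is the desired projection. Idempotency is immediate from $P^2=T\circ(S\circ T)\circ S=T\circ\Id\circ S=T\circ S=P$, so $P$ is a bounded projection with $\|P\|\leq\|S\|\,\|T\|$. Its range is contained in $T(Y)$, and conversely $P$ fixes every element $T(y)$ of $T(Y)$, since $P(T(y))=T(S(T(y)))=T(y)$; hence $P$ maps $X$ onto $T(Y)$, which shows that $T(Y)$ is $\|S\|\,\|T\|$-complemented in $X$.

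Finally, combining the two steps, $Y$ is, via $T$, a $\|S\|\,\|T\|$-isomorph of the subspace $T(Y)$, which is $\|S\|\,\|T\|$-complemented in $X$; this is precisely $Y\complemented X$ together with the quantitative refinement. I do not expect any genuine obstacle here: the entire argument reduces to the computation $P^2=P$ and the two-sided norm estimate on $T$, which is exactly why the statement can be recorded as a Fact with its proof omitted.
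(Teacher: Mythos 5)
Your proof is correct and is precisely the standard argument the paper has in mind (the paper explicitly omits the proof of this Fact as "easy, standard"). All four claims --- $T$ bounded below with $\|T^{-1}\|\leq\|S\|$, idempotency of $P=T\circ S$, the identification of the range of $P$ with $T(Y)$, and the resulting quantitative complementation --- are verified exactly as one would expect.
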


The second strategy is based on the existence of linear extension operators. Given a pointed metric space $(M, d, 0)$ and a subset $F$ containing 0, let $\Exto(F,M)$ denote the set of all extensions $E:\Lip_0(F)\to \Lip_0(M)$ which are linear and continuous. By $\Ext(F,M)$ we denote the set of those $E\in \Exto(F,M)$ which are pointwise-to-pointwise continuous. The following is a classical fact (for the proof concerning Lipschitz-free space one may consult e.g. \cite[Section 2.1]{K15}).

\begin{fact}\label{fact:extensions}Let $(M, d, 0)$ be a pointed metric space and let $N\subset M$ be a subset with $0\in N$ and let $T\in \Exto(N,M)$. Then $\Lip_0(N)$ is isometric to a $\|T\|$-complemented subspace of $\Lip_0(M)$. Moreover, if $T\in \Ext(N,M)$, then $\F(N)$ is isometric to a $\|T\|$-complemented subspace of $\F(M)$.
\end{fact}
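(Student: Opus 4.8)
The plan is to prove the two assertions separately: I would realize $\Lip_0(N)$ as a complemented subspace of $\Lip_0(M)$ by means of the restriction operator, and $\F(N)$ as a complemented subspace of $\F(M)$ by passing to the pre-adjoint of $T$. In both cases the complementation will follow from Fact~\ref{fact:findingProjection} once the relevant one-sided inverse is in place, so the work lies in producing the correct operators and in tracking the constants and the isometric identifications.

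For the first assertion, consider the restriction operator $R\colon\Lip_0(M)\to\Lip_0(N)$, $R(g)=g|_N$. Since the supremum defining the Lipschitz constant ranges over a larger set of pairs on $M$ than on $N$, one checks at once that $R$ is linear with $\|R\|\le 1$; in fact, by the McShane extension theorem the smallest Lipschitz constant of an extension of $f\in\Lip_0(N)$ equals $\|f\|_{\Lip}$, so $R$ is a metric quotient map and $\Lip_0(N)$ is isometric to $\Lip_0(M)/\ker R$. As $T$ is an extension operator we have $R\circ T=\Id_{\Lip_0(N)}$, and I would apply Fact~\ref{fact:findingProjection} with $S=R$: then $T\circ R$ is a projection onto $T(\Lip_0(N))$ with $\ker(T\circ R)=\ker R$ and norm at most $\|R\|\,\|T\|\le\|T\|$, which together with the metric-quotient identification above exhibits $\Lip_0(N)$ as the claimed $\|T\|$-complemented copy.

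For the second assertion I would first use the hypothesis $T\in\Ext(N,M)$ to show that $T$ is an adjoint operator. Identifying $\Lip_0(N)=\F(N)^*$ and $\Lip_0(M)=\F(M)^*$, on bounded sets the weak$^*$ topology agrees with the topology of pointwise convergence, because the evaluation functionals $\delta_x$ span a dense subspace of the free space; hence the pointwise-to-pointwise continuity of $T$ gives weak$^*$-to-weak$^*$ continuity on bounded sets, and by the Banach--Dieudonn\'e (Krein--\v{S}mulian) theorem $T$ is weak$^*$-to-weak$^*$ continuous, so $T=\widehat T^{\,*}$ for some bounded $\widehat T\colon\F(M)\to\F(N)$ with $\|\widehat T\|=\|T\|$. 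Next, the isometric, base-point preserving inclusion $N\hookrightarrow M$ extends by the universal property to a linear isometric embedding $\iota\colon\F(N)\to\F(M)$, and testing on the generators $\delta_x$ shows $\iota^*=R$. Dualizing $R\circ T=\Id$ and using injectivity of the adjoint operation, I would obtain $\widehat T\circ\iota=\Id_{\F(N)}$; therefore $\iota\circ\widehat T$ is idempotent with range $\iota(\F(N))$ and norm at most $\|\widehat T\|=\|T\|$, so it is a projection of $\F(M)$ onto $\iota(\F(N))$. Since $\iota$ is an isometric embedding, $\iota(\F(N))$ is the desired $\|T\|$-complemented isometric copy of $\F(N)$, i.e. $\F(N)\complemented\F(M)$ with constant $\|T\|$.

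The routine pieces are the estimate $\|R\|\le 1$ and the computation $\iota^*=R$. The step I expect to be the genuine obstacle is the passage from pointwise-to-pointwise continuity of $T$ to the existence of a pre-adjoint $\widehat T$: one must carefully match the topology of pointwise convergence with the weak$^*$ topology on bounded sets and then invoke Krein--\v{S}mulian to upgrade weak$^*$-continuity on balls to global weak$^*$-continuity. The secondary point demanding attention is the bookkeeping of constants, ensuring that the two projections have norm bounded by exactly $\|T\|$ and that the copies are identified isometrically (through $\iota$ on the free-space side and through the metric quotient $R$ on the $\Lip_0$ side), rather than merely isomorphically.
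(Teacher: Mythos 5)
Your argument is correct and is precisely the classical proof the paper has in mind: the paper states this fact without proof, pointing to \cite[Section 2.1]{K15} for the free-space half, and that reference's argument is the same pre-adjoint construction you give (restriction $R$ with $R\circ T=\Id$ for the $\Lip_0$ part; pointwise-to-weak$^*$ matching on bounded sets plus Banach--Dieudonn\'e to produce $\widehat T$ with $\widehat T\circ\iota=\Id$ for the free-space part). One caveat on the first assertion: your operators exhibit $\Lip_0(N)$ only as $\|T\|$-\emph{isomorphic} to the $\|T\|$-complemented subspace $T(\Lip_0(N))$, since the McShane/metric-quotient identification makes $\Lip_0(N)$ isometric to the quotient $\Lip_0(M)/\ker R$ rather than to that subspace (the quotient map restricted to $T(\Lip_0(N))$ is norm-one but need not be an isometry); this imprecision is inherited from the paper's own wording, and every application in the paper uses only the isomorphic version with controlled constants, so it is harmless. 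The free-space half, by contrast, is genuinely isometric exactly as you argue, because there the embedding is $\iota$ itself.
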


Let us explicitly mention one special case of the above.
\begin{fact}\label{fact:retractionImpliesComplementedLipSpaces}Let $M$ be a metric space, $N$ be a subset of $M$ and let $R:M\to N$ be a Lipschitz retraction.
Then $\Lip_0(N)$ is isometric to a $\|R\|_{Lip}$-complemented subspace of $\Lip_0(M)$.
\end{fact}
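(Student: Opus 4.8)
The plan is to realise this as the special case of Fact~\ref{fact:extensions} in which the extension operator is precomposition with $R$. First I would define the pullback operator $T\colon\Lip_0(N)\to\Lip_0(M)$ by $T(f)=f\circ R$. Linearity is immediate, and since $R$ is a retraction onto $N$ and $0\in N$ we have $R(0)=0$, so that $T(f)(0)=f(R(0))=f(0)=0$; thus the definition really does land in $\Lip_0(M)$, once we check that $T(f)$ is Lipschitz.

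Next I would verify the two properties needed to feed $T$ into Fact~\ref{fact:extensions}. The norm estimate is the chain rule for Lipschitz constants: for $x\neq y$ in $M$,
\[
|T(f)(x)-T(f)(y)|=|f(R(x))-f(R(y))|\le \|f\|_{\Lip}\,d(R(x),R(y))\le \|f\|_{\Lip}\,\|R\|_{\Lip}\,d(x,y),
\]
so $T(f)\in\Lip_0(M)$ and $\|T\|\le\|R\|_{\Lip}$; in particular $T\in\Exto(N,M)$. That $T$ is genuinely an \emph{extension} operator is the one place where the retraction hypothesis is used: for $x\in N$ we have $R(x)=x$, hence $T(f)(x)=f(R(x))=f(x)$, i.e. $T(f)$ restricts to $f$ on $N$.

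With these two observations in hand the statement follows at once: Fact~\ref{fact:extensions} applied to $T$ shows that $\Lip_0(N)$ is isometric to a $\|T\|$-complemented subspace of $\Lip_0(M)$, and since $\|T\|\le\|R\|_{\Lip}$ this subspace is in particular $\|R\|_{\Lip}$-complemented, as claimed. I do not anticipate any genuine obstacle here. The only things to check are the elementary norm bound and the extension identity $T(f)|_N=f$, both of which reduce directly to the defining property $R|_N=\Id_N$ of a retraction; the real content of the argument—constructing the projection and controlling its norm—has already been isolated in Fact~\ref{fact:extensions}, and, behind it, in the decomposition recorded in Fact~\ref{fact:findingProjection}.
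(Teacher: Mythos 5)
Your proposal is correct and coincides with the paper's own argument: the paper likewise defines the extension operator $E(f):=f\circ R$, notes $E\in\Exto(N,M)$, and invokes Fact~\ref{fact:extensions}. You have merely spelled out the routine verifications (the norm bound $\|E\|\le\|R\|_{\Lip}$ and the identity $E(f)|_N=f$) that the paper leaves implicit.
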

\begin{proof}Consider the mapping $E(f):=f\circ R$, $f\in\Lip_0(N)$. Then $E\in\Exto(N,M)$ and we may apply Fact~\ref{fact:extensions}.
\end{proof}

The structure of the paper is following. In the next section, we develop the technical tools needed for the proofs of our main results, and in particular derive that $\Lip_0(\zet^d)\simeq \Lip_0(\Rea^d)$, for all $d\in \Nat$, see Theorem~\ref{cor:RAndZIso}. In the second section, we put that last result into the wider context of Lipschitz spaces on finitely generated groups and Lipschitz spaces on Lie groups. The third section contains applications to Lipschitz spaces on infinite-dimensional Banach spaces. The last section contains several open questions.

\section{Key Lemma and isomorphisms to \texorpdfstring{$\ell_\infty$}--sums}\label{sec:principles}

In this section we summarize the basic principles which lead to the proof of Theorem~\ref{thm:TheIsomorphismTheorem} from which it follows that $\Lip_0(\Rea^d)\simeq \Lip_0(\zet^d)$ for every $d\in\Nat$. Those principles will be used in subsequent sections in various other situations.

\subsection{Key Lemma and its first consequences}

It seems to us that the key result of the current paper which enables us to prove statements concerning isomorphisms between $\Lip_0(M)$ spaces which do not pass to their preduals is Lemma~\ref{lem:key}, which we therefore call the Key Lemma. In order to enlighten its statement, let us start with some definitions.

\begin{defin}\label{defin:rescaling}
 If $(M,d,0)$ is a pointed metric space and $\lambda>0$ is a constant, a \emph{$\lambda$-rescaling} of $M$, denoted by $\lambda M$, is the pointed metric space $(M,\tfrac{d}{\lambda},0)$; that is, we have $\tfrac{d}{\lambda}(x,y)=\frac{d(x,y)}{\lambda}$ for $x,y\in M$. 
\end{defin}

Let us note that the above definition is chosen in such a way that whenever $X$ is a normed linear space, then the mapping $T_\lambda:X\to \lambda X$ defined as $T_\lambda(x):=\lambda x$, $x\in X$ is an isometry.

\begin{defin}
We call a pair of pointed metric spaces $(X,\ex)$ a \emph{good pair} if there exists a sequence of positive real numbers $(r_n)_{n=1}^{\infty}$ such that
\begin{itemize}
\item There is a sequence of bi-Lipschitz embeddings $i_n:r_nX\to \ex$ with $\sup_n\{\Lip(i_n),\Lip(i_n^{-1})\} < \infty$ and $i_n(0)=0$ for each $n\in\Nat$.
\item For each $x\in\ex$ there is a sequence $(x_n)_{n=1}^\infty$ in $X$ such that $i_n(x_n)\to x$.
\end{itemize}
\end{defin}

An important example of a good pair is $(\zet^d,\Rea^d)$. We will see other examples later.

Let us recall that, given $C,D>0$ and metric spaces $(M,\rho)$ and $(N,\sigma)$, a mapping $f:M\to N$ is \emph{$(D,C)$-Coarse Lipschitz} if for each $x,y\in M$ we have  $\sigma(f(x),f(y))\leq C + D\rho(x,y)$.

\begin{lemma}[Key Lemma]\label{lem:key}
Let $(M,d)$ be a pointed metric space and $(M_n)_{n=1}^\infty$ be a sequence of pointed metric spaces such that there are $D_1,D_2>0$, a sequence $(\varepsilon_n)$ of non-negative numbers with $\varepsilon_n\to 0$, and a sequence of injective $D_1$-Lipschitz maps $i_n: M_n\rightarrow M$, with $i_n(0)=0$, such that the inverse mappings $i^{-1}_n:i_n(M_n)\to M_n$ are $(D_2,\varepsilon_n)$-Coarse Lipschitz, and finally for every $x\in M$ there is a sequence $(x_n)_{n=1}^\infty$ with $i_n(x_n)\to x$. Then 
\begin{equation}\label{formula}\Lip_0(M)\complemented\bigoplus_{\ell_\infty}\Lip_0(M_n).
\end{equation}
Quantitatively, $\Lip_0(M)$ is $D_1D_2$-isomorphic to a $D_1D_2$-complemented subspace of $\bigoplus_{\ell_\infty}\Lip_0(M_n)$.

In particular, (\ref{formula}) holds if $(M_n)_{n=1}^\infty$ is an increasing sequence of subsets of $M$ such that $\bigcup_{n \in \Nat}M_n$ 
is dense in $M$.
\end{lemma}
\begin{proof}
Let us consider the operator $T:\Lip_0(M)\to \bigoplus_{\ell_\infty} \Lip_0(M_n)$ defined by
\[
	\big(T(f)(n)\big)(x):=f\big(i_n(x)\big),\quad x\in M_n, n\in\Nat.
\]
Clearly, $T$ is linear and $\|T\|\leq D_1$.

Further, fix a non-principal ultrafilter $\U$ on $\Nat$ and consider $S:\bigoplus_{\ell_\infty} \Lip_0(M_n)\to \Lip_0(M)$ defined by
\[
S((f_n))(x):=\lim_\U f_n(x_n),\quad i_n(x_n)\to x, (f_n)\in \bigoplus_{\ell_\infty} \Lip_0(M_n).
\]
Choose arbitrarily sequences $(x_n)_n$ and $(y_n)_n$ such that $i_n(x_n)\to x$ and $i_n(y_n)\to y$. Then
\begin{align*}
|S((f_n))(x) -  S((f_n))(y)|=|\lim_\U f_n(x_n)-\lim_\U f_n(y_n)|=
 \lim_\U |f_n(x_n)-f_n(y_n)|\leq\\ \|(f_n)\| \lim_\U d_{M_n}(x_n,y_n)= \|(f_n)\| \lim_\U d_{M_n}(i^{-1}_n(i_n(x_n)),i^{-1}_n(i_n(y_n)))\leq\\
\leq \|(f_n)\| \big(\lim_\U D_2d(i_n(x_n),i_n(y_n))+\varepsilon_n\big) = D_2 \|(f_n)\|d(x,y).
\end{align*}

Hence, $S$ is well-defined and $\|S\|\leq D_2$. It is straightforward to see that it is also linear. Moreover, we have $S\circ T = \Id$. Indeed, for $f\in \Lip_0(M)$ and $x\in M$ we have
\[
S(T(f))(x) = \lim_{\U}(T(f)(n))(x_n) = \lim_{\U} f\big(i_n(x_n)\big) = f(x),
\]
where $(x_n)_{n=1}^\infty$ is a sequence satisfying $i_n(x_n)\to x$. Hence, by Fact~\ref{fact:findingProjection}, we have $\Lip_0(M)\complemented\bigoplus_{\ell_\infty}\Lip_0(M_n)$.

The ``In particular'' part follows by taking $i_n$ to be the canonical injection for each $n$.
\end{proof}

\begin{remark}We have been informed by the anonymous referee that the proof of Lemma~\ref{lem:key} bears some similarities with the techniques used in \cite{Go15}.\end{remark}

\begin{prop}\label{prop:PropForGoodPairs}For every good pair $(X,\ex)$ we have $\Lip_0(\ex)\complemented \bigoplus_{\ell_\infty} \Lip_0(X)$.

Moreover, if $\Lip_0(X)\complemented \Lip_0(\ex)$ then $\bigoplus_{\ell_\infty} \Lip_0(\ex)\simeq \bigoplus_{\ell_\infty} \Lip_0(X)$.
\end{prop}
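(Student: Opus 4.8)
The plan is to derive the first assertion directly from the Key Lemma and to obtain the ``Moreover'' part from the Pe\l czy\'nski decomposition method, Theorem~\ref{t:pel}.

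For the first assertion I would apply Lemma~\ref{lem:key} with $M:=\ex$ and $M_n:=r_nX$, where $(r_n)_{n=1}^\infty$ and the bi-Lipschitz embeddings $i_n\colon r_nX\to\ex$ are those provided by the definition of a good pair. Putting $D_1:=\sup_n\Lip(i_n)$ and $D_2:=\sup_n\Lip(i_n^{-1})$, both finite by hypothesis, each $i_n$ is $D_1$-Lipschitz with $i_n(0)=0$, and each inverse $i_n^{-1}$, being $D_2$-Lipschitz, is $(D_2,0)$-Coarse Lipschitz, so one may take $\varepsilon_n\equiv 0$. The convergence requirement of the Key Lemma is precisely the second bullet in the definition of a good pair. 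Hence Lemma~\ref{lem:key} gives $\Lip_0(\ex)\complemented\bigoplus_{\ell_\infty}\Lip_0(r_nX)$. It then remains to note that rescaling does not change the isometry type of the Lipschitz space: for any pointed metric space and any $\lambda>0$ one has $\|f\|_{\Lip(\lambda X)}=\lambda\|f\|_{\Lip(X)}$, so $f\mapsto f/\lambda$ is an isometry of $\Lip_0(X)$ onto $\Lip_0(\lambda X)$. Consequently $\bigoplus_{\ell_\infty}\Lip_0(r_nX)$ is isometric to $\bigoplus_{\ell_\infty}\Lip_0(X)$, which finishes the first assertion.

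For the ``Moreover'' part I abbreviate $A:=\Lip_0(X)$ and $B:=\Lip_0(\ex)$, so that the first part reads $B\complemented\bigoplus_{\ell_\infty}A$ and the extra hypothesis reads $A\complemented B$. I would first record the routine stability of complementation under $\ell_\infty$-sums: if $U\complemented V$ via a projection $P$, then $\bigoplus_{\ell_\infty}U\complemented\bigoplus_{\ell_\infty}V$ by letting $P$ act coordinatewise. Applying this to $A\complemented B$ yields $\bigoplus_{\ell_\infty}A\complemented\bigoplus_{\ell_\infty}B$; applying it to $B\complemented\bigoplus_{\ell_\infty}A$ and using the canonical isometry $\bigoplus_{\ell_\infty}\bigl(\bigoplus_{\ell_\infty}A\bigr)\simeq\bigoplus_{\ell_\infty}A$ yields $\bigoplus_{\ell_\infty}B\complemented\bigoplus_{\ell_\infty}A$.

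Finally I would invoke Theorem~\ref{t:pel} with $P:=\bigoplus_{\ell_\infty}A$ and $Q:=\bigoplus_{\ell_\infty}B$. The same self-absorption identity $\bigoplus_{\ell_\infty}\bigl(\bigoplus_{\ell_\infty}A\bigr)\simeq\bigoplus_{\ell_\infty}A$ supplies the hypothesis $P\simeq\bigoplus_{\ell_\infty}P$, while the two complementations from the previous step supply $Q\complemented P$ and $P\complemented Q$. Theorem~\ref{t:pel} then gives $P\simeq Q$, that is, $\bigoplus_{\ell_\infty}\Lip_0(\ex)\simeq\bigoplus_{\ell_\infty}\Lip_0(X)$. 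I do not expect a genuine obstacle in this argument; the only points that require attention are the bookkeeping of the iterated $\ell_\infty$-sums, whose self-absorption is exactly what makes the Pe\l czy\'nski hypothesis $P\simeq\bigoplus_{\ell_\infty}P$ automatic, and the realization that the decomposition method must be applied to the $\ell_\infty$-sums $P$ and $Q$ rather than to $A$ and $B$ directly.
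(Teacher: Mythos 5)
Your proposal is correct and follows essentially the same route as the paper: the first assertion is the Key Lemma applied to $M_n:=r_nX$ together with the isometry $\Lip_0(X)\simeq\Lip_0(r_nX)$, and the ``Moreover'' part is the chain $\bigoplus_{\ell_\infty}\Lip_0(\ex)\complemented\bigoplus_{\ell_\infty}\bigoplus_{\ell_\infty}\Lip_0(X)\simeq\bigoplus_{\ell_\infty}\Lip_0(X)\complemented\bigoplus_{\ell_\infty}\Lip_0(\ex)$ followed by the Pe\l czy\'nski decomposition method applied to the $\ell_\infty$-sums. Your extra bookkeeping (coordinatewise projections, the self-absorption of iterated $\ell_\infty$-sums) is exactly what the paper leaves implicit.
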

\begin{proof}
Fix an appropriate sequence $(r_n)_{n=1}^\infty$. By Lemma~\ref{lem:key}, we have $\Lip_0(\ex)\complemented \bigoplus_{\ell_\infty} \Lip_0(r_nX)$. Since $\Lip_0(X)$ is isometric to $\Lip_0(r_nX)$ for each $n\in\Nat$, we get $\Lip_0(\ex)\complemented \bigoplus_{\ell_\infty} \Lip_0(X)$. If $\Lip_0(X)\complemented \Lip_0(\ex)$, then we have \[\bigoplus_{\ell_\infty} \Lip_0(\ex)\complemented \bigoplus_{\ell_\infty}\bigoplus_{\ell_\infty} \Lip_0(X)\simeq \bigoplus_{\ell_\infty} \Lip_0(X)\complemented \bigoplus_{\ell_\infty} \Lip_0(\ex).\]
Hence, the ``Moreover'' part follows by the Pe\l czy\'nski decomposition method.
\end{proof}

An important class of metric spaces, where the assumption of the ``Moreover'' part of Proposition~\ref{prop:PropForGoodPairs} is satisfied are the doubling metric spaces, see Proposition~\ref{prop:RdIsomorphicWithLinftyZd} below.

\begin{defin}
A metric space $M$ is \emph{doubling} if there exists a constant $D(M)\in\Nat$, called the \emph{doubling constant}, such that every ball of radius $r>0$ in $M$ can be covered by at most $D(M)$-many balls of radius $r/2$.
\end{defin}
Euclidean spaces are typical examples of doubling spaces. However, not every doubling space can be bi-Lipschitz embedded into an Euclidean space, see \cite[Theorem 7.1]{S96}. Also, every subspace of a doubling metric space is again doubling. This is easy and well-known, but we record it here for the convenience of the reader.

\begin{fact}\label{fact:ballsInDoublingspaces}
Let $M$ be a doubling metric space. Then every subspace of $M$ is again doubling with doubling constant bounded by $D(M)^2$.
\end{fact}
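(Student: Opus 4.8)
The plan is to apply the doubling property of $M$ twice and then \emph{recenter} the resulting small balls so that their centers lie in the subspace. First I would fix a subspace $N\subseteq M$ (with the restricted metric), a point $x\in N$, and a radius $r>0$, and record that the ball of $N$ centered at $x$ of radius $r$ is precisely the trace $B(x,r)\cap N$ of the corresponding ball of $M$.

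Next I would cover $B(x,r)$ inside $M$. Applying the definition of the doubling constant once yields a cover of $B(x,r)$ by at most $D(M)$ balls of radius $r/2$; applying it again to each of these yields a cover of $B(x,r)$ by at most $D(M)^2$ balls $B(z_1,r/4),\dots,B(z_k,r/4)$ with $k\le D(M)^2$. The key point — and the only real subtlety — is that the centers $z_j$ produced this way belong to $M$ and need not lie in $N$, so these balls do not directly witness the doubling property of $N$.

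To fix this, for each index $j$ with $B(z_j,r/4)\cap N\neq\emptyset$ I would choose a point $w_j\in B(z_j,r/4)\cap N$ and use the triangle inequality: any $y\in B(z_j,r/4)\cap N$ satisfies $d(w_j,y)\le d(w_j,z_j)+d(z_j,y)\le r/4+r/4=r/2$, so $B(z_j,r/4)\cap N\subseteq B(w_j,r/2)\cap N$, the latter being a ball of $N$ centered at the point $w_j\in N$. It is exactly this recentering step that costs a factor of $2$ in the radius, which is why one must reach radius $r/4$ (hence apply doubling twice) in order to return to radius $r/2$.

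Intersecting the cover of $B(x,r)$ with $N$ and discarding the indices $j$ for which $B(z_j,r/4)$ misses $N$, I would conclude that $B(x,r)\cap N$ is covered by the at most $D(M)^2$ balls $B(w_j,r/2)\cap N$ of $N$ centered at points of $N$. Since $x$ and $r$ were arbitrary, this shows $N$ is doubling with doubling constant at most $D(M)^2$, completing the argument. I do not anticipate any genuine obstacle; the one thing to be careful about is the recentering, ensuring the covering balls end up centered in $N$ without losing control of their number.
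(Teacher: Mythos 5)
Your argument is correct and is essentially identical to the paper's proof: both apply the doubling condition twice to cover $B(x,r)$ by at most $D(M)^2$ balls of radius $r/4$, then recenter each ball meeting $N$ at a point of $N$ and enlarge the radius to $r/2$ via the triangle inequality. No issues.
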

\begin{proof}
Let $N$ be a subspace of $M$ and let $B_N(x,r)$ be a ball in $N$. Put $C:=D(M)$. Since $M$ is doubling with constant $C$, there are points $y_1,\ldots,y_{C^2}\in M$ such that $B_M(x,r)\subset \bigcup_{i=1}^{C^2} B_M(y_i,r/4)$. For each $i=1,\ldots,C^2$, pick $z_i\in N\cap B(y_i,r/4)$ if it exists (otherwise put $z_i=x$). Then $B_N(z_i,r/2)\supset B_N(y_i,r/4)$; hence, balls $B_N(z_1,r/2),\ldots B_N(z_{C^2},r/2)$ cover $B_N(x,r)$.
\end{proof}

\begin{prop}\label{prop:complInRd}There exists a universal constant $C>0$ such that for every metric space $M$ and a for every subspace $N\subset M$ which is doubling, there exists $E\in\Ext(N,M)$ with $\|E\|\leq C(1 + \log D(N))$.

Consequently, we have $\F(N)\complemented \F(M)$  and $\Lip_0(N)\complemented \Lip_0(M)$.
\end{prop}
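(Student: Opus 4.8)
The plan is to prove the quantitative extension statement first and then read off the two complementation assertions for free. Indeed, once I produce a linear, pointwise-to-pointwise continuous extension operator $E\in\Ext(N,M)$ with $\|E\|\leq C(1+\log D(N))$, Fact~\ref{fact:extensions} immediately yields that $\F(N)$ is $\|E\|$-complemented in $\F(M)$ and $\Lip_0(N)$ is $\|E\|$-complemented in $\Lip_0(M)$. So the entire content of the proposition is the construction of $E$ together with the norm bound; everything else is bookkeeping.

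To build $E$ I would use a Whitney-type extension. Writing $\Omega:=M\setminus N$ and $\rho(x):=d(x,N)$ for $x\in\Omega$, I would first extract from the balls $\{B(x,\rho(x)/8):x\in\Omega\}$ a maximal separated subfamily, obtaining a locally finite \emph{Whitney cover} $(B_i)_i$ of $\Omega$ by balls whose radii are comparable to the distance of their centres to $N$ and whose multiplicity is controlled by the doubling constant of $N$. Subordinate to it I would take a partition of unity $(\phi_i)_i$ with $\sum_i\phi_i\equiv 1$ on $\Omega$, each $\phi_i$ Lipschitz with constant of order the reciprocal of its radius, and for each $i$ I would select a point $p_i\in N$ whose distance to $B_i$ is comparable to the radius of $B_i$. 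The operator is then
$$E(f)(x):=\begin{cases}f(x), & x\in N,\\[2pt]\sum_i\phi_i(x)f(p_i), & x\in\Omega,\end{cases}$$
which is manifestly linear. Local finiteness of the cover makes the sum locally finite, so for each fixed $x$ the value $E(f)(x)$ depends on only finitely many numbers $f(p_i)$; since each evaluation $f\mapsto f(p_i)$ is pointwise-continuous, this is precisely what gives membership in $\Ext(N,M)$ rather than merely in $\Exto(N,M)$.

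For the norm I would estimate $|E(f)(x)-E(f)(y)|$ in three cases. When $x,y\in N$ there is nothing to do. When one point lies in $N$ and the other in $\Omega$, the estimate is routine using that $\rho(x)\le d(x,y)$ and that each chosen $p_i$ is within a bounded multiple of $\rho(x)$ of $x$. The substantial case is $x,y\in\Omega$: using $\sum_i\phi_i\equiv1$ one writes $E(f)(x)-E(f)(y)=\sum_i(\phi_i(x)-\phi_i(y))(f(p_i)-f(p_{i_0}))$ for a reference index $i_0$ active at $x$, and bounds this by $\|f\|_{\Lip}$ times a geometric quantity governed by the Lipschitz constants of the $\phi_i$ and the number of balls of the cover meeting the region around $x$ — a count supplied by Fact~\ref{fact:ballsInDoublingspaces} and the doubling of $N$.

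The hard part is obtaining the \emph{logarithmic} dependence $\log D(N)$ rather than a polynomial one. A naive single-scale Whitney partition of unity produces a multiplicity bound that is a fixed power of $D(N)$, hence an extension norm polynomial in $D(N)$. To bring this down to $O(\log D(N))$ I would organise the construction hierarchically over the dyadic scales $2^k$ inside $N$: fix $2^k$-nets $\Net_k\subset N$, whose cardinality inside any ball is controlled through $D(N)$, and assemble $E(f)$ as a telescoping sum of successive differences of values along a nested sequence of nearest-net-point maps, truncated at the scale $\rho(x)$. This is the mechanism behind logarithmic extension bounds: the per-scale separation cost accumulates \emph{additively} across the $O(\log D(N))$ effectively distinct scales seen between two points, rather than multiplicatively, so that summing the resulting geometric series in the $\Omega$-$\Omega$ estimate produces exactly the factor $C(1+\log D(N))$. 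Equivalently, one may construct at each scale a stochastic separating (padded) decomposition of $N$ with separation parameter $O(\log D(N))$ and define $E$ by averaging the associated interpolation over the randomness; taking expectation linearises the operator and transfers the $O(\log D(N))$ separation bound directly to $\|E\|$.
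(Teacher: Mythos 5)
Your overall plan is the right one: reducing the proposition to the construction of a single operator $E\in\Ext(N,M)$ with $\|E\|\leq C(1+\log D(N))$ and then invoking Fact~\ref{fact:extensions} is exactly how the paper proceeds. But the paper does not build $E$ by hand --- it quotes \cite{LN05} (Lemma 3.8, Corollary 3.12 and Theorem 4.1) for the existence of a $C(1+\log D(N))$-gentle partition of unity of $M$ with respect to $N$, and the observation on page 2325 of \cite{LP13} that such a partition of unity produces a pointwise-to-pointwise continuous linear extension operator of norm at most three times the gentleness constant. Your second, stochastic variant (padded random decompositions of $N$ at every scale, averaged to linearise the operator) is precisely the mechanism of \cite{LN05}, so in spirit you are reconstructing the cited proof rather than offering a different route.

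The gap is that the reconstruction is not actually carried out at the two points where the work lies. First, in your deterministic Whitney construction the balls $B(x,\rho(x)/8)$ are centred at points of $\Omega=M\setminus N$, and the multiplicity of a maximal separated subfamily near a point $y$ is the number of $\rho(y)/8$-separated centres lying within distance comparable to $\rho(y)$ of $y$ \emph{in $M$}; since $M$ carries no doubling assumption, neither Fact~\ref{fact:ballsInDoublingspaces} nor the doubling of $N$ bounds this count, so the sum $\sum_i|\phi_i(x)-\phi_i(y)|\,|f(p_i)-f(p_{i_0})|$ may have unboundedly many nonnegligible terms and the $\Omega$--$\Omega$ estimate does not close. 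This is exactly the difficulty that forces \cite{LN05} to build the partition from random clusters of $N$ rather than from a Whitney cover of $M\setminus N$. Second, the logarithmic bound is asserted rather than proved: the claim that the per-scale costs ``accumulate additively across $O(\log D(N))$ effectively distinct scales'' is the content of the Lee--Naor theorem, not a routine computation, and nothing in the sketch establishes the required padding probability of order $1/(1+\log D(N))$ at each scale or the summability of the resulting series. As written, the argument either needs these two points filled in (which amounts to reproving \cite{LN05}) or, as the paper does, an explicit appeal to \cite{LN05} together with \cite{LP13}.
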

\begin{proof}
By \cite[Lemma 3.8, Corollary 3.12 and Theorem 4.1]{LN05}, $M$ admits a $C(1+\log D(N))$-gentle partition of unity with respect to $N$. It was observed in \cite[page 2325]{LP13} that this implies the existence of $E\in\Ext(N,M)$ with $\|E\|\leq 3C(1+\log D(N))$. The rest follows immediately from Fact~\ref{fact:extensions}.
\end{proof}

\begin{prop}\label{prop:RdIsomorphicWithLinftyZd}For every good pair $(X,\ex)$ where $X$ is doubling we have $\bigoplus_{\ell_\infty} \Lip_0(\ex)\simeq \bigoplus_{\ell_\infty} \Lip_0(X)$.
\end{prop}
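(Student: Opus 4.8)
The plan is to reduce everything to the ``Moreover'' part of Proposition~\ref{prop:PropForGoodPairs}. That statement tells us that once we know $\Lip_0(X)\complemented\Lip_0(\ex)$, the Pe\l czy\'nski decomposition method yields $\bigoplus_{\ell_\infty}\Lip_0(\ex)\simeq\bigoplus_{\ell_\infty}\Lip_0(X)$, which is exactly the desired conclusion. So the entire content of the proof is to exhibit, using the extra hypothesis that $X$ is doubling, a complemented copy of $\Lip_0(X)$ inside $\Lip_0(\ex)$.

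To this end I would fix a single index $n$ and use the bi-Lipschitz embedding $i_n\colon r_nX\to\ex$ provided by the good pair, together with $i_n(0)=0$. Put $N:=i_n(r_nX)$, a subset of $\ex$ containing the base point $0$. The key geometric observation is that $N$ is a doubling metric space: it is bi-Lipschitz equivalent to $r_nX$, which is doubling since rescaling the metric by a constant does not change the doubling constant, $D(r_nX)=D(X)$. Granting that $N$ is doubling, Proposition~\ref{prop:complInRd} applies to the subspace $N\subset\ex$ and gives $\Lip_0(N)\complemented\Lip_0(\ex)$. Finally, since $i_n$ is a bi-Lipschitz bijection onto $N$ fixing $0$, the composition operator $f\mapsto f\circ i_n$ is a linear isomorphism of $\Lip_0(N)$ onto $\Lip_0(r_nX)$, while $f\mapsto f/r_n$ is an isometry of $\Lip_0(X)$ onto $\Lip_0(r_nX)$ (as already used in the proof of Proposition~\ref{prop:PropForGoodPairs}). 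Chaining these identifications gives $\Lip_0(X)\simeq\Lip_0(N)\complemented\Lip_0(\ex)$, which is what we need.

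The only nonroutine point, and the step I expect to be the crux, is the claim that a bi-Lipschitz image of a doubling space is again doubling with a finite (indeed controlled) doubling constant. One cannot simply invoke Fact~\ref{fact:ballsInDoublingspaces} here, since $N$ is a subspace of $\ex$ but $\ex$ itself is not assumed to be doubling. Instead one argues directly: writing $L:=\max\{\Lip(i_n),\Lip(i_n^{-1})\}$, the $i_n$-preimage of a ball of radius $r$ in $N$ is contained in a ball of radius $Lr$ in $r_nX$; covering the latter by balls of radius $Lr/2^k$ through $k$-fold iteration of the doubling property of $r_nX$, and pushing these forward under $i_n$, produces a cover of the original ball by balls of radius $L^2r/2^k$ in $N$. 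Choosing $k$ with $2^k\geq 2L^2$ makes these of radius at most $r/2$, so $N$ is doubling with constant at most $D(X)^{\lceil 1+2\log_2 L\rceil}$. This is finite (and in fact uniformly bounded in $n$ because $\sup_n\{\Lip(i_n),\Lip(i_n^{-1})\}<\infty$, though uniformity is not needed since one index suffices). The remainder is bookkeeping with Propositions~\ref{prop:PropForGoodPairs} and~\ref{prop:complInRd} and the rescaling isometry $\Lip_0(X)\simeq\Lip_0(r_nX)$.
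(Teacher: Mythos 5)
Your proof is correct and follows essentially the same route as the paper, whose own proof is just the one-line citation of Propositions~\ref{prop:PropForGoodPairs} and~\ref{prop:complInRd}; your argument is exactly the intended way of combining them. The one detail you spell out that the paper leaves implicit --- that the image $i_n(r_nX)$ is a doubling subspace of $\ex$ because a bi-Lipschitz image of a doubling space is doubling with controlled constant (Fact~\ref{fact:ballsInDoublingspaces} being unavailable since $\ex$ is not assumed doubling) --- is identified and verified correctly.
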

\begin{proof}This is a consequence of Proposition~\ref{prop:PropForGoodPairs} and Proposition~\ref{prop:complInRd}.
\end{proof}

Note that there is no hope that Proposition~\ref{prop:RdIsomorphicWithLinftyZd} would pass to the preduals in general, see Fact~\ref{prop:freeOverRdAndZdnotIso}.

\subsection{Isomorphisms to \texorpdfstring{$\ell_\infty$}--sums}

Let us start by recalling the following result by Kalton, see \cite[Lemma 4.1]{K04}.

\begin{prop}\label{prop:kalton}Let $(M,d,0)$ be a pointed metric space and let $(r_k)$ and $(s_k)$ be sequences of numbers with $s_k < r_k < s_{k+1}$ for $k\in\Nat$ and, for each $k\in\Nat$, denote by $C_k$ the set $\{x\in M\setsep 2^{s_k} < d(x,0)\leq 2^{r_k}\}$.

If $\inf\{s_{k+1} - r_{k}\setsep k\in\Nat\} = \theta > 0$, then
\[
	\Lip_0(\{0\}\cup \bigcup_{k=1}^\infty C_k)\simeq \bigoplus_{\ell_\infty}\Lip_0(\{0\}\cup C_k)\quad\text{and}\quad \F(\{0\}\cup \bigcup_{k=1}^\infty C_k)\simeq \bigoplus_{\ell_1} \F(\{0\}\cup C_k).
\]
\end{prop}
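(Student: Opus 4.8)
The plan is to realize the two sides as images of one another under a pair of mutually inverse isomorphisms built from restriction and from ``gluing across the gaps'', and then to transfer the statement to the preduals by a duality argument. Write $N:=\{0\}\cup\bigcup_k C_k$ and $W_k:=\{0\}\cup C_k$. Since the annuli $C_k$ are pairwise disjoint (because $s_k<r_k<s_{k+1}$) and $N$ is their disjoint union together with the basepoint, every $f\in\Lip_0(N)$ restricts to a family $(f|_{W_k})_k$, and conversely any family $(f_k)_k$ with $f_k\in\Lip_0(W_k)$ determines a single function $Ef$ on $N$ by $(Ef)(x):=f_k(x)$ for $x\in C_k$ and $(Ef)(0):=0$. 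First I would record the restriction operator $R\colon\Lip_0(N)\to\bigoplus_{\ell_\infty}\Lip_0(W_k)$, $R(f):=(f|_{W_k})_k$; it is clearly linear with $\|R\|\le 1$, and by construction $R\circ E=\Id$ and $E\circ R=\Id$ \emph{as maps of functions}. Hence the entire content is to show that $E$ maps into $\Lip_0(N)$ with a norm bound independent of the family.

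The heart of the argument is the Lipschitz estimate for $g:=Ef$, where $\sup_k\|f_k\|_{\Lip}\le 1$. For two points in the same $C_k$, and for any point paired with $0$, the ratio is immediately $\le 1$. The only real case is $x\in C_k$, $y\in C_j$ with $j>k$, and here the gap hypothesis enters: setting $a:=d(x,0)\le 2^{r_k}$ and $b:=d(y,0)>2^{s_j}\ge 2^{s_{k+1}}\ge 2^{r_k+\theta}$, we get $a\le 2^{-\theta}b$, so that $|g(x)-g(y)|\le|f_k(x)|+|f_j(y)|\le a+b\le(1+2^{-\theta})b$ while $d(x,y)\ge b-a\ge(1-2^{-\theta})b$, giving a cross-annulus ratio of at most $\tfrac{1+2^{-\theta}}{1-2^{-\theta}}$. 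Thus $g\in\Lip_0(N)$ and $\|E\|\le\tfrac{1+2^{-\theta}}{1-2^{-\theta}}$. This estimate is the crux, and the point that could trip one up is that one must bound $|f_j(y)|$ by $\|f_j\|_{\Lip}\,d(y,0)$ rather than by the sup-norm of $f_j$ over the possibly very thick annulus $C_j$; it is precisely the exponential separation of the radial scales forced by $\theta>0$ that keeps the values of $f_j$ near $y$ small compared with $d(x,y)$.

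Combining the two displays, $R$ and $E$ are mutually inverse bounded isomorphisms, proving $\Lip_0(N)\simeq\bigoplus_{\ell_\infty}\Lip_0(W_k)$ with constant at most $\tfrac{1+2^{-\theta}}{1-2^{-\theta}}$. For the free-space statement I would note that both $R$ and $E$ are weak$^*$-to-weak$^*$ continuous: under the identifications $\Lip_0(N)=\F(N)^*$ and $\bigoplus_{\ell_\infty}\Lip_0(W_k)=\big(\bigoplus_{\ell_1}\F(W_k)\big)^*$, weak$^*$ convergence is pointwise convergence on bounded sets, and $R$ and $E$ are defined by the pointwise restriction/gluing rules above, hence are pointwise-to-pointwise continuous. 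A weak$^*$-continuous operator between dual spaces is an adjoint, so $R=S^*$ and $E=T^*$ for bounded $S\colon\bigoplus_{\ell_1}\F(W_k)\to\F(N)$ and $T\colon\F(N)\to\bigoplus_{\ell_1}\F(W_k)$ (concretely, the maps induced by the inclusions $W_k\hookrightarrow N$ and by the basepoint-preserving partition $N\to\bigsqcup_k W_k$). Taking adjoints in $R\circ E=\Id$ and $E\circ R=\Id$ yields $T\circ S=\Id$ and $S\circ T=\Id$, so $S$ and $T$ are mutually inverse and $\F(N)\simeq\bigoplus_{\ell_1}\F(W_k)$. If one prefers to avoid the duality bookkeeping, the same two operators can instead be fed into Fact~\ref{fact:findingProjection} together with the Pe\l czy\'nski scheme, but the direct route above delivers the isomorphism at once.
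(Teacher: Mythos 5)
Your proof is correct. The paper does not actually prove this proposition---it is quoted verbatim from Kalton \cite[Lemma 4.1]{K04}---and your argument (restriction and gluing are mutually inverse, with the cross-annulus estimate $\tfrac{1+2^{-\theta}}{1-2^{-\theta}}$ coming from $d(x,0)\le 2^{-\theta}d(y,0)$ for points in different annuli, followed by the weak$^*$/pre-adjoint transfer to the free spaces) is essentially Kalton's original one.
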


\begin{lemma}\label{lem:isomorphicSums}Let $M$ be a pointed metric space and $(M_n)_{n=1}^\infty$ an increasing sequence of subsets of $M$ such that $\bigcup_{n\in\Nat} M_n$ is dense in $M$ and that there exists $C>0$ such that for each $n\in\Nat$ there is $E\in\Exto(M_n,M)$ with $\|E\|\leq C$. Then
\begin{equation}\label{eq:firstStep}
	\bigoplus_{\ell_\infty}\Lip_0(M)\simeq \bigoplus_{\ell_\infty} \Lip_0(M_n).
\end{equation}
\end{lemma}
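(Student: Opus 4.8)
The plan is to apply the Pe\l czy\'nski decomposition method (Theorem~\ref{t:pel}) to the pair $X:=\bigoplus_{\ell_\infty}\Lip_0(M)$ and $Y:=\bigoplus_{\ell_\infty}\Lip_0(M_n)$, so that the desired relation \eqref{eq:firstStep} is precisely the conclusion $X\simeq Y$. Three hypotheses must be checked: that $X\simeq\bigoplus_{\ell_\infty}X$, that $Y\complemented X$, and that $X\complemented Y$. The first is immediate, since $\bigoplus_{\ell_\infty}X$ is an $\ell_\infty$-sum of copies of $\Lip_0(M)$ indexed by $\Nat\times\Nat$, and any bijection $\Nat\to\Nat\times\Nat$ identifies it isometrically with $X$. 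Throughout I would repeatedly invoke the elementary principle that if $U_m\complemented V_m$ with a uniform bound on the products of the norms of the factorizing operators, then $\bigoplus_{\ell_\infty}U_m\complemented\bigoplus_{\ell_\infty}V_m$; this follows at once from Fact~\ref{fact:findingProjection} applied to the diagonal operators $\mathcal S((v_m))=(S_m v_m)$ and $\mathcal T((u_m))=(T_m u_m)$.

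For $Y\complemented X$ I would use the extension operators directly. By hypothesis each $M_n$ carries some $E_n\in\Exto(M_n,M)$ with $\|E_n\|\le C$; composing with the restriction map $R_n:\Lip_0(M)\to\Lip_0(M_n)$, which has norm at most $1$ and satisfies $R_n\circ E_n=\Id$, exhibits $\Lip_0(M_n)$ as a $C$-complemented subspace of $\Lip_0(M)$ by Fact~\ref{fact:extensions}. Since the bound $C$ is uniform in $n$, the diagonal principle above yields $Y\complemented X$, with the two diagonal operators built from $(E_n)$ and $(R_n)$.

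The main point is the reverse complementation $X\complemented Y$, and this is where the multiplicity must be matched: the Key Lemma produces only a \emph{single} complemented copy of $\Lip_0(M)$ inside $Y$, whereas $X$ is a countable $\ell_\infty$-sum of such copies. To produce them simultaneously I would fix a partition $\Nat=\bigsqcup_{m\in\Nat}A_m$ into infinitely many infinite sets. For each $m$ the subfamily $(M_n)_{n\in A_m}$ is again an increasing sequence of subsets whose union equals $\bigcup_{n\in\Nat}M_n$ (because $A_m$ is infinite and the $M_n$ are nested), hence dense in $M$; so the ``In particular'' part of the Key Lemma (Lemma~\ref{lem:key}) applies to $(M_n)_{n\in A_m}$ and gives $\Lip_0(M)\complemented\bigoplus_{\ell_\infty,\,n\in A_m}\Lip_0(M_n)$ with factorizing operators of norm at most $1$, in particular with a bound independent of $m$. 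Reindexing $Y$ as the $\ell_\infty$-sum over $m$ of the blocks $\bigoplus_{\ell_\infty,\,n\in A_m}\Lip_0(M_n)$, which is legitimate since the $A_m$ partition $\Nat$ so that each $\Lip_0(M_n)$ occurs exactly once, the uniform diagonal principle gives $X=\bigoplus_{\ell_\infty,\,m}\Lip_0(M)\complemented Y$.

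With all three hypotheses verified, Theorem~\ref{t:pel} yields $X\simeq Y$, which is \eqref{eq:firstStep}. I expect the only genuinely delicate step to be the matching of multiplicities in $X\complemented Y$: the partition-into-infinite-blocks device is exactly what lets one invoke the Key Lemma once per block while keeping all constants uniform, which is what the diagonal complementation argument requires.
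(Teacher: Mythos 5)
Your argument is correct, and its overall skeleton coincides with the paper's: both apply the Pe\l czy\'nski scheme of Theorem~\ref{t:pel} to $X=\bigoplus_{\ell_\infty}\Lip_0(M)$ and $Y=\bigoplus_{\ell_\infty}\Lip_0(M_n)$, and both obtain $Y\complemented X$ by summing the extension--restriction factorizations $R_n\circ E_n=\Id$ diagonally. The difference lies in how $X\complemented Y$ is produced. The paper invokes the Key Lemma only once, getting $\Lip_0(M)\complemented\bigoplus_{\ell_\infty}\Lip_0(M_n)$, takes the $\ell_\infty$-sum of this relation to land in the double sum $\bigoplus_{\ell_\infty}\bigl(\bigoplus_{\ell_\infty}\Lip_0(M_n)\bigr)$, and then collapses that double sum into the single sum $\bigoplus_{\ell_\infty}\Lip_0(M_n)$ by fixing a bijection $\pi:\Nat^2\to\Nat$ with $\pi(i,j)\geq i$ and using the extension operators a second time (to push a function on $M_r$ up to $M_{\pi(r,s)}\supseteq M_r$). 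You instead partition $\Nat$ into infinitely many infinite sets $A_m$, observe that each cofinal subfamily $(M_n)_{n\in A_m}$ still has dense union, apply the ``In particular'' part of Lemma~\ref{lem:key} once per block with norm-one operators, and assemble diagonally after the isometric reindexing $Y\cong\bigoplus_{\ell_\infty,\,m}\bigl(\bigoplus_{\ell_\infty,\,n\in A_m}\Lip_0(M_n)\bigr)$. Your route buys a small economy: the hypothesis on uniformly bounded extension operators is used only for the direction $Y\complemented X$, whereas the paper needs it in both directions; the price is invoking the ultrafilter construction of the Key Lemma countably many times rather than once. Both arguments are complete and all constants are uniform where they need to be.
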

\begin{proof}
By Lemma~\ref{lem:key}, we have 
\begin{equation}\label{eq:sumInSumSum}
\bigoplus_{\ell_\infty}\Lip_0(M)\complemented \bigoplus_{\ell_\infty}\Big(\bigoplus_{\ell_\infty}\Lip_0(M_n)\Big).
\end{equation}
Fix a bijection $\pi:\Nat^2\to \Nat$ such that $\pi(i,j)\geq i$ for every $(i,j)\in \Nat^2$. Pick $E_i\in\Exto(M_i,M)$ with $\|E_i\|\leq C$ for each $i\in\Nat$ and consider the operator $T:\bigoplus_{\ell_\infty}\Big(\bigoplus_{\ell_\infty}\Lip_0(M_n)\Big)\to \bigoplus_{\ell_\infty}\Lip_0(M_n)$ 
given by
\[\begin{split}
T((f_{i,j}))_{\pi(r,s)}(x):=E_r(f_{r,s})(x),\quad & x\in M_{\pi(r,s)},\; (f_{i,j})\text{ is such that}\\
& f_{i,j}\in \Lip_0\big(M_i\big)\text{ for each $j\in\Nat$}.
\end{split}\]

Then we have $\|T\|\leq C$. Further, consider the operator $S:\bigoplus_{\ell_\infty}\Lip_0(M_n)\to \bigoplus_{\ell_\infty}\Big(\bigoplus_{\ell_\infty}\Lip_0(M_n)\Big)$ 
given by
\[
S((f_n))_{(i,j)}(x):=f_{\pi(i,j)}(x),\quad x\in M_i, (f_n)\in \bigoplus_{\ell_\infty}\Lip_0(M_n).
\]
Then we have $\|S\|\leq 1$ and it is easy to verify that $S\circ T = \Id$; hence, by Fact~\ref{fact:findingProjection}, we have
\[
\bigoplus_{\ell_\infty}\Big(\bigoplus_{\ell_\infty}\Lip_0(M_n)\Big)\complemented \bigoplus_{\ell_\infty}\Lip_0(M_n),
\]
which together with \eqref{eq:sumInSumSum} gives $\bigoplus_{\ell_\infty}\Lip_0(M)\complemented \bigoplus_{\ell_\infty} \Lip_0(M_n)$ . On the other hand, by Fact~\ref{fact:extensions}, each $\Lip_0(M_n)$ is $C$-complemented in $\Lip_0(M)$ and so $\bigoplus_{\ell_\infty} \Lip_0(M_n)\complemented \bigoplus_{\ell_\infty}\Lip_0(M)$; thus, Pe\l czy\'nski decomposition method gives \eqref{eq:firstStep}.
\end{proof}

\begin{defin}
If there exists an isometric embedding $i:M\to \lambda M$ with $i(0)=0$, then we denote $i(x)$ by $\lambda x$ and we say that $M$ is \emph{$\lambda$-closed}. Note that in this case we have $d(\lambda x,\lambda y) = \lambda d(x,y)$ for every $x,y\in M$.

Following \cite{lD}, we shall say a pointed metric space $M$ is \emph{self-similar} supposing that there exists $\lambda >1$ such that there is a bijection $f:M\to M$ with $d(f(x),f(y)) = \lambda d(x,y)$.

Moreover, a metric space $M$ is called \emph{homogeneous} if for every $x,y\in M$ there exists an isometry of $M$ sending $x$ to $y$.
\end{defin}

\begin{thm}\label{thm:isoToEllInftySum}Let $(M,d)$ be an unbounded pointed metric space satisfying the following two conditions:
\begin{enumerate}[(i)]
\item\label{cond:extensionFromBalls} there is $C>0$ such that for every $r>0$ there is $E\in\Exto(B(0,r),M)$ with $\|E\|\leq C$;
\item\label{cond:extensionFromCoronas} whenever $C_k$ are as in Proposition~\ref{prop:kalton}, we have $\Lip_0(\{0\}\cup \bigcup_{k=1}^\infty C_k)\complemented \Lip_0(M)$.
\end{enumerate}
Moreover, let $M$ satisfy one of the following two conditions
\begin{enumerate}[(a)]
\item $M$ is homogeneous;
\item $M$ is uniformly discrete, $\lambda$-closed for some $\lambda > 1$ and for each $n\in\Nat$ and $r>0$ there is $E\in\Exto(\lambda^n B(0,r),M)$ with $\|E\|\leq C$ where we consider metric $d$ on the set $\lambda^n B(0,r) = \{\lambda^n x\setsep x\in B(0,r)\}$.
\end{enumerate}

Then
\[
\Lip_0(M)\simeq \bigoplus_{\ell_\infty} \Lip_0(M).
\]
\end{thm}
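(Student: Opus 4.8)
Let $(M,d)$ be an unbounded pointed metric space satisfying conditions (i) and (ii), and moreover satisfying (a) or (b). Then $\Lip_0(M)\simeq \bigoplus_{\ell_\infty}\Lip_0(M)$.

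The plan is to reach the goal via the Pe\l czy\'nski decomposition method (Theorem~\ref{t:pel}), applied to $X:=\bigoplus_{\ell_\infty}\Lip_0(M)$ and $Y:=\Lip_0(M)$. The inclusion $Y\complemented X$ is trivial (project onto one coordinate), and $X\simeq\bigoplus_{\ell_\infty}X$ holds automatically since an $\ell_\infty$-sum of countably many copies of an $\ell_\infty$-sum is again such an $\ell_\infty$-sum. So the entire content of the theorem is the reverse complementation $X\complemented Y$, i.e. that a countable $\ell_\infty$-sum of copies of $\Lip_0(M)$ sits complementably inside $\Lip_0(M)$ itself. The strategy is to produce a complemented copy of $\bigoplus_{\ell_\infty}\Lip_0(M)$ inside $\Lip_0(M)$ by first decomposing $\Lip_0(M)$ over a sequence of disjoint annuli (coronas), and then showing that each corona contributes a complemented copy of $\Lip_0(M)$.

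Concretely, I would proceed in the following steps. First, using condition (ii) together with Kalton's decomposition (Proposition~\ref{prop:kalton}), I get that $\bigoplus_{\ell_\infty}\Lip_0(\{0\}\cup C_k)$ is complemented in $\Lip_0(M)$, where the $C_k$ are a suitable family of coronas with enough spectral gap $\theta>0$. The core of the proof is then to show that each single corona, after rescaling, carries a complemented copy of $\Lip_0(M)$, so that $\bigoplus_{\ell_\infty}\Lip_0(M)\complemented\bigoplus_{\ell_\infty}\Lip_0(\{0\}\cup C_k)\complemented\Lip_0(M)$. This is where the two cases (a) and (b) diverge. In the homogeneous case (a), I would exploit condition (i): for each $r$, the ball $B(0,r)$ admits a uniformly bounded linear extension operator, so $\Lip_0(B(0,r))$ is uniformly complemented in $\Lip_0(M)$; homogeneity lets me translate these balls so that they fit disjointly inside the coronas while exhausting larger and larger scales. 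Letting $r\to\infty$ along the coronas and invoking Lemma~\ref{lem:isomorphicSums} (whose hypotheses are exactly (i)-type uniformly bounded extensions from an increasing exhausting sequence), I would conclude $\bigoplus_{\ell_\infty}\Lip_0(M)\simeq\bigoplus_{\ell_\infty}\Lip_0(B(0,r_n))$ and feed this into the corona decomposition. In the self-similar case (b), the dilation map $x\mapsto\lambda x$ realizes $\lambda^n B(0,r)$ isometrically inside $M$, and the hypothesis in (b) supplies uniformly bounded extension operators from each such dilated ball; so each corona $C_k$ (which lives at scale roughly $2^{r_k}$) contains a bi-Lipschitz-rescaled copy of a ball $\lambda^{n_k}B(0,r)$, and uniform-boundedness of the extensions plus Lemma~\ref{lem:isomorphicSums} again upgrades the exhaustion-by-balls into $\bigoplus_{\ell_\infty}\Lip_0(M)$.

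The main obstacle I anticipate is bookkeeping the scales correctly so that the rescaled balls or dilated balls land \emph{disjointly} inside distinct coronas $C_k$ while still exhausting all of $M$ in the limit, all with \emph{uniformly} bounded extension constants. The uniformity is essential: Lemma~\ref{lem:isomorphicSums} and the Pe\l czy\'nski argument both collapse if the norms of the extension operators blow up with $k$, so the whole point of hypotheses (i) and (b) is to guarantee a single constant $C$ across all scales. In case (a) the delicate point is that homogeneity only moves points by isometries, so I must verify that after translating $B(0,r)$ to sit near radius $2^{r_k}$ it really falls inside the annulus $C_k=\{2^{s_k}<d(x,0)\le 2^{r_k}\}$ and that the extension operator from condition (i), composed with this isometric translation, still lands in $\Lip_0(M)$ with controlled norm. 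In case (b), since $M$ is uniformly discrete and only $\lambda$-closed (not fully self-similar), I must be careful that the images $\lambda^n B(0,r)$ genuinely realize the right rescaled metric and that the combinatorics of choosing $n_k, r$ as functions of $k$ produces coronas with the gap $\theta>0$ demanded by Proposition~\ref{prop:kalton}. Once the scale matching is set up cleanly, the functional-analytic core is a routine chain of $\complemented$-relations closed off by Theorem~\ref{t:pel}.
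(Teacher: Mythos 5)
Your proposal is correct and follows essentially the same route as the paper's proof: reduce via Lemma~\ref{lem:isomorphicSums} and condition (i) to showing $\bigoplus_{\ell_\infty}\Lip_0(B(0,r_n))\complemented\Lip_0(M)$, plant uniformly complemented copies of the balls inside disjoint coronas (by homogeneity in case (a), by the dilation $x\mapsto\lambda^{2^n}x$ in case (b)), glue the coronas with Proposition~\ref{prop:kalton} and condition (ii), and close with the Pe\l czy\'nski decomposition method. The difficulties you flag (uniformity of the extension constants and the scale bookkeeping ensuring the balls land in coronas with gap $\theta>0$) are exactly the points the paper's proof handles by its explicit choice of $x_n$ and of $r_n\leq\tfrac{\varepsilon}{4}\lambda^{2^n}$.
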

\begin{proof}Pick an increasing sequence $(r_n)_{n=1}^\infty$ of positive numbers such that $r_n\to\infty$. If $M$ is not homogeneous, it is $\varepsilon$-separated and $\lambda$-closed for some $\lambda > 1$; in this case we pick $(r_n)$ in such a way that moreover we have $\varepsilon < r_n\leq \tfrac{\varepsilon}{4}\lambda^{2^n}$, $n\in\Nat$.
By Lemma~\ref{lem:isomorphicSums} we have $\Lip_0(M)\complemented \bigoplus_{\ell_\infty}\Lip_0(M)\complemented \bigoplus_{\ell_\infty} \Lip_0\big(B(0,r_n)\big)$. Hence, by the Pe\l czy\'nski decomposition method, it suffices to show $\bigoplus_{\ell_\infty} \Lip_0\big(B(0,r_n)\big) \complemented \Lip_0(M)$. Now we distinguish the two cases.

\smallskip
\noindent \textbf{Case 1 ($M$ is homogeneous):} Put $r_0=0$ and $x_0 = 0$. We inductively find a sequence $(x_n)_{n=1}^\infty$ such that $d(x_n,0) > 3r_n + 2d(x_{n-1},0)$ for each $n\in\Nat$. For each $n\in\Nat$ we put
\[
C_n := \{x\in M\setsep 2(r_{n-1} + d(x_{n-1},0)) < d(x,0)\leq d(x_n,0) + r_n\}.
\]
Now we easily verify that $B(x_n,r_n)\subset C_n$. Moreover, the assumptions of Proposition~\ref{prop:kalton} are satisfied for sets $C_n$ with $\theta = 1$ as the following computation shows:
\[
	\log_2\big(2(r_n + d(x_n,0))\big) - \log_2(d(x_n,0) + r_n) = \log_2 2 = 1.
\]
Clearly, $B(x_n,r_n)$ is isometric to $B(0,r_n)$ since $M$ is homogeneous; hence, by the condition \eqref{cond:extensionFromBalls}, we have that $\Lip_0(B(0,r_n))\simeq \Lip_0(B(x_n,r_n))$ is isometric to a $C$-complemented subspace of $\Lip_0(\{0\}\cup C_n)$.

\smallskip
\noindent \textbf{Case 2 ($M$ is $\varepsilon$-separated and $\lambda$-closed):} Note that it follows from the definition that $M$ is $\lambda^n$-closed for each $n\in\Nat$. We put
\[
C_n := \{x\in M\setsep \tfrac{\varepsilon}{2}\lambda^{2^n} < d(x,0)\leq r_n \lambda^{2^n}\},\;n\in\Nat.
\]
Now the mapping $x\mapsto \lambda^{2^n} x$ maps $B(0,r_n)$ into $C_n$. Moreover, the assumptions of Proposition~\ref{prop:kalton} are satisfied for sets $C_n$ with $\theta = 1$ as the following computation shows:
\[
	\log_2(\tfrac{\varepsilon}{2}\lambda^{2^{n+1}}) - \log_2(r_n \lambda^{2^n})  \geq  \log_2(2)  = 1.
\]
Finally, let us observe that $\Lip_0(B(0,r_n))$ is isometric to a $C$-complemented subspace of $\Lip_0(\{0\}\cup C_n)$. Indeed, the mapping $f\mapsto \lambda^{2^n} \big(f\circ (\lambda^{2^n})^{-1}\big)$ is isometry between $\Lip_0(B(0,r_n))$ and $\Lip_0((\lambda^{2^n}B(0,r_n),d))$, where the later is isometric to a $C$-complemented subspace of $\Lip_0(\{0\}\cup C_n)$ by the assumption.

\smallskip
Hence, in both cases, 
\[
\bigoplus_{\ell_\infty}\Lip_0 \big(B(0,r_n)\big)\complemented \bigoplus_{\ell_\infty}\Lip_0 \big(\{0\}\cup C_n\big)\simeq\Lip_0\Big(\{0\}\cup \bigcup_{n=1}^\infty C_n\Big)\complemented \Lip_0(M),
\]
where the second and third follow from Proposition~\ref{prop:kalton} and condition \eqref{cond:extensionFromCoronas} respectively. Therefore, we have $\bigoplus_{\ell_\infty} \Lip_0\big(B(0,r_n)\big) \complemented \Lip_0(M)$, which is what we wanted to show.
\end{proof}

\begin{remark}
The assumptions \eqref{cond:extensionFromBalls} and \eqref{cond:extensionFromCoronas} of Theorem~\ref{thm:isoToEllInftySum} are satisfied in Banach spaces as observed e.g. in the proof of \cite[Theorem 3.1]{K15} or in doubling metric spaces which follows from Proposition~\ref{prop:complInRd} and Fact~\ref{fact:ballsInDoublingspaces}.

It follows from Lemma~\ref{lem:isomorphicSums} that under the assumptions of Theorem~\ref{thm:isoToEllInftySum}, $\Lip_0(M)$ is isomorphic to the $\ell_\infty$-sum of Lipschitz spaces over its balls as well. Moreover, from the proof of Theorem~\ref{thm:isoToEllInftySum} it follows that $\Lip_0(M)$ is isomorphic to the $\ell_\infty$-sum of Lipschitz spaces over its annuli.
\end{remark}

\begin{thm}\label{thm:TheIsomorphismTheorem}
Let $\ex$ be a pointed homogeneous doubling metric space. Let $X$ be a subset of $\ex$ such that $(X,\ex)$ form a good pair and such that
\begin{itemize}
\item either $X$ is uniformly discrete and $\lambda$-closed for some $\lambda > 1$,
\item or $X$ is homogeneous and unbounded.
\end{itemize}
Then we have $\Lip_0(\ex)\simeq \Lip_0(X)$.
\end{thm}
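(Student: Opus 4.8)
The plan is to apply the Pe\l czy\'nski decomposition method (Theorem~\ref{t:pel}) to $X:=\Lip_0(\ex)$ and $Y:=\Lip_0(X)$. To do so I need three ingredients: that $\Lip_0(\ex)\simeq\bigoplus_{\ell_\infty}\Lip_0(\ex)$, that $\Lip_0(X)\complemented\Lip_0(\ex)$, and that $\Lip_0(\ex)\complemented\Lip_0(X)$. The first of these is exactly what Theorem~\ref{thm:isoToEllInftySum} is designed to deliver, provided I can verify its hypotheses for the homogeneous doubling space $\ex$; the second should follow from the fact that $X\subset\ex$ is doubling (being a subset of a doubling space, by Fact~\ref{fact:ballsInDoublingspaces}) together with Proposition~\ref{prop:complInRd}; and the third is really the crux, where the good-pair structure enters.

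First I would establish $\Lip_0(\ex)\simeq\bigoplus_{\ell_\infty}\Lip_0(\ex)$. Since $\ex$ is homogeneous it satisfies alternative (a) of Theorem~\ref{thm:isoToEllInftySum}, so I only need conditions \eqref{cond:extensionFromBalls} and \eqref{cond:extensionFromCoronas}. Both follow from doublingness: balls $B(0,r)$ in $\ex$ are doubling with a constant controlled uniformly by $D(\ex)^2$ via Fact~\ref{fact:ballsInDoublingspaces}, so Proposition~\ref{prop:complInRd} yields extension operators of uniformly bounded norm, giving \eqref{cond:extensionFromBalls}; and the coronas $\{0\}\cup\bigcup_k C_k$ are likewise doubling subsets, so Proposition~\ref{prop:complInRd} gives $\Lip_0(\{0\}\cup\bigcup_k C_k)\complemented\Lip_0(\ex)$, which is \eqref{cond:extensionFromCoronas}. (This is precisely the observation recorded in the Remark following Theorem~\ref{thm:isoToEllInftySum}.) Hence $\ex$ meets all the assumptions and I obtain $\Lip_0(\ex)\simeq\bigoplus_{\ell_\infty}\Lip_0(\ex)$.

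Next, $\Lip_0(X)\complemented\Lip_0(\ex)$ is immediate: $X$ is a subset of the doubling space $\ex$, hence doubling by Fact~\ref{fact:ballsInDoublingspaces}, and Proposition~\ref{prop:complInRd} gives $\Lip_0(X)\complemented\Lip_0(\ex)$ directly. For the reverse containment $\Lip_0(\ex)\complemented\Lip_0(X)$ I would invoke the good-pair machinery. By Proposition~\ref{prop:PropForGoodPairs} the good pair $(X,\ex)$ already gives $\Lip_0(\ex)\complemented\bigoplus_{\ell_\infty}\Lip_0(X)$, so it remains to absorb the $\ell_\infty$-sum. This is where the two alternative hypotheses on $X$ do their work: they are exactly the conditions (together with the extension properties inherited from $\ex$'s doublingness) that let me apply Theorem~\ref{thm:isoToEllInftySum} to $X$ itself, yielding $\Lip_0(X)\simeq\bigoplus_{\ell_\infty}\Lip_0(X)$. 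Combining, $\Lip_0(\ex)\complemented\bigoplus_{\ell_\infty}\Lip_0(X)\simeq\Lip_0(X)$.

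The main obstacle I anticipate is verifying the hypotheses of Theorem~\ref{thm:isoToEllInftySum} for $X$, rather than for $\ex$. When $X$ is homogeneous and unbounded, alternative (a) applies and the argument parallels the one for $\ex$. The delicate case is when $X$ is uniformly discrete and $\lambda$-closed, where I must check alternative (b): in addition to the ball- and corona-extension conditions I need, for each $n$ and $r$, an extension operator from $\lambda^n B(0,r)$ of norm bounded independently of $n$. Since the sets $\lambda^n B(0,r)$ are subsets of the doubling space $\ex$ with doubling constant uniformly bounded by $D(\ex)^2$, Proposition~\ref{prop:complInRd} again supplies such operators with a uniform norm bound depending only on $D(\ex)$. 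With all hypotheses confirmed in both cases, Theorem~\ref{thm:isoToEllInftySum} gives $\Lip_0(X)\simeq\bigoplus_{\ell_\infty}\Lip_0(X)$, and the three ingredients feed into Theorem~\ref{t:pel} to conclude $\Lip_0(\ex)\simeq\Lip_0(X)$.
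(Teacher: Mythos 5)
Your proposal is correct and follows essentially the same route as the paper: both arguments reduce the theorem to showing that $\Lip_0(X)$ and $\Lip_0(\ex)$ are each isomorphic to their own $\ell_\infty$-sums via Theorem~\ref{thm:isoToEllInftySum} (with hypotheses verified through Fact~\ref{fact:ballsInDoublingspaces} and Proposition~\ref{prop:complInRd}), combined with the good-pair complementation from Proposition~\ref{prop:PropForGoodPairs} and the Pe\l czy\'nski decomposition method. The only difference is organizational: the paper packages the Pe\l czy\'nski step inside Proposition~\ref{prop:RdIsomorphicWithLinftyZd} at the level of $\ell_\infty$-sums, whereas you apply it directly to the spaces themselves, which is the same argument unfolded.
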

\begin{proof}By Proposition~\ref{prop:RdIsomorphicWithLinftyZd}, it suffices to show we have $\Lip_0(X)\simeq \bigoplus_{\ell_\infty}\Lip_0(X)$ and $\Lip_0(\ex)\simeq \bigoplus_{\ell_\infty}\Lip_0(\ex)$. However, this follows from Theorem~\ref{thm:isoToEllInftySum}, because the assumptions \eqref{cond:extensionFromBalls} and \eqref{cond:extensionFromCoronas} are satisfied by Proposition~\ref{prop:complInRd} and Fact~\ref{fact:ballsInDoublingspaces}.
\end{proof}

The following theorem is our main application in this section and it follows immediately from Theorem \ref{thm:TheIsomorphismTheorem}. Let us note that this result is further generalized and commented in the next section, see Theorem~\ref{thm:LipofCarnot} and the discussion below it.
\begin{thm}\label{cor:RAndZIso}
For every $d\in\Nat$, we have $\Lip_0(\zet^d)\simeq \Lip_0(\Rea^d)$.
\end{thm}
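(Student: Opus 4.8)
The plan is to obtain the statement as a direct instance of Theorem~\ref{thm:TheIsomorphismTheorem}, applied with ambient space $\ex=\Rea^d$ and subset $X=\zet^d$, invoking the first of the two bullet alternatives (namely that $X$ is uniformly discrete and $\lambda$-closed). Thus the whole argument reduces to checking that the pair $(\zet^d,\Rea^d)$ meets each hypothesis of that theorem; once they are verified, the isomorphism $\Lip_0(\Rea^d)\simeq\Lip_0(\zet^d)$ follows at once.

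First I would record the properties of the ambient space. The space $\Rea^d$ is pointed with distinguished point $0$, and it is homogeneous because every translation $x\mapsto x+v$ is an isometry carrying $0$ to $v$; it is doubling, since a ball of radius $r$ is covered by a number of balls of radius $r/2$ depending only on $d$, this bound being independent of $r$ by scaling. Next I would check the two required properties of $X=\zet^d$. It is uniformly discrete, being $1$-separated, as distinct integer points lie at Euclidean distance at least $1$. It is $\lambda$-closed with $\lambda=2$: the map $x\mapsto 2x$ sends $\zet^d$ into $\zet^d$, fixes $0$, and satisfies $\tfrac{d}{2}(2x,2y)=\tfrac{1}{2}\|2x-2y\|=\|x-y\|=d(x,y)$, hence is an isometric embedding of $\zet^d$ into $2\zet^d$ as the definition of $\lambda$-closedness demands.

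It remains to verify that $(\zet^d,\Rea^d)$ is a good pair. I would take any sequence $(r_n)$ of positive reals with $r_n\to\infty$ and define $i_n\colon r_n\zet^d\to\Rea^d$ by $i_n(p)=p/r_n$. Since the distance in $r_n\zet^d$ between $p,q\in\zet^d$ equals $\|p-q\|/r_n$, each $i_n$ is an isometric embedding fixing $0$, so $\sup_n\{\Lip(i_n),\Lip(i_n^{-1})\}=1<\infty$. For the density requirement, given $x\in\Rea^d$ I would let $p_n\in\zet^d$ be a nearest lattice point to $r_nx$, so that $\|p_n-r_nx\|\leq\sqrt d/2$ and therefore $\|i_n(p_n)-x\|=\|p_n-r_nx\|/r_n\to 0$; the sequence $x_n:=p_n$ then witnesses $i_n(x_n)\to x$. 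This establishes the good pair structure, and Theorem~\ref{thm:TheIsomorphismTheorem} applies.

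All the computations above are routine, and that is precisely the point: the substantive work has already been discharged by the machinery feeding into Theorem~\ref{thm:TheIsomorphismTheorem}. If I had to locate the real difficulty, it is not in this verification but in the interplay of the Key Lemma (whose coarse-Lipschitz inverse estimate lets $\bigoplus_{\ell_\infty}\Lip_0(M_n)$ absorb $\Lip_0(M)$), the doubling extension result Proposition~\ref{prop:complInRd} (needed so that the self-absorption $\Lip_0\simeq\bigoplus_{\ell_\infty}\Lip_0$ holds on both sides via Theorem~\ref{thm:isoToEllInftySum}), and the Pe\l czy\'nski decomposition method gluing the two one-sided complementations together. Relative to that, the present theorem is the pay-off: $\Rea^d$ supplies a homogeneous doubling target, $\zet^d$ a uniformly discrete self-similar net inside it, and the good pair is furnished by the obvious lattice-refinement isometries.
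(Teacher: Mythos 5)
Your proposal is correct and is exactly the paper's route: the paper derives this theorem as an immediate consequence of Theorem~\ref{thm:TheIsomorphismTheorem} applied to the pair $(\zet^d,\Rea^d)$, leaving the routine verifications (homogeneity and doubling of $\Rea^d$, uniform discreteness and $\lambda$-closedness of $\zet^d$, and the good-pair structure via the rescaling embeddings) implicit, while you spell them out correctly.
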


Let us close this section with few observations which will be used later.

\begin{lemma}\label{lem:netsInSSspaces}
Let $\ex$ be a pointed self-similar space and let $X$ be an $\varepsilon$-dense subset, for some $\varepsilon>0$, of $\ex$  containing $0$, the distinguished point. Then $(X,\ex)$ is a good pair.
\end{lemma}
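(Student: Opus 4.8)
The plan is to verify the two bullet points in the definition of a good pair by exploiting the self-similarity of $\ex$. Since $\ex$ is self-similar, there is $\lambda > 1$ and a bijection $f:\ex\to\ex$ with $d(f(u),f(v)) = \lambda d(u,v)$. The natural candidate for the sequence $(r_n)$ is $r_n = \lambda^n$ (or its reciprocal, depending on how the rescaling interacts with $f$); the maps $i_n:r_nX\to\ex$ should be built by iterating $f$ an appropriate number of times so that the $\varepsilon$-dense set $X$, after rescaling by $r_n$, gets stretched to be finely dense in $\ex$. First I would fix the distinguished point: note that $f$ need not fix $0$, so I expect to compose $f^n$ with a translation/isometry only if $\ex$ is homogeneous — but since homogeneity is not assumed here, I would instead work with $f^n$ directly and absorb the basepoint issue into the bi-Lipschitz constants, or rather observe that since $f(0)$ may differ from $0$ one should track $i_n(0)=0$ carefully.

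The key computation is the following. Applying $f^n$ scales all distances by $\lambda^n$, so $f^n$ maps $\ex$ isometrically onto the rescaled space $\lambda^{-n}\ex$ in the sense that $\tfrac{d}{\lambda^{-n}}(f^n(u),f^n(v)) = \lambda^n d(u,v)/\lambda^n$; more usefully, $f^{-n}$ contracts distances by $\lambda^{-n}$. The idea is that $X$ is $\varepsilon$-dense in $\ex$, so $f^{-n}(X)$ is $\varepsilon\lambda^{-n}$-dense in $\ex$; equivalently, viewing $X$ inside an appropriate rescaling $r_n X$ of $X$ and mapping into $\ex$ via the restriction of a power of $f$, the images become $\varepsilon'_n$-dense with $\varepsilon'_n\to 0$. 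I would set $i_n$ to be (the restriction to $X$ of) $f^n$ or $f^{-n}$ composed with the identification $X = r_n X$ as sets, choose $r_n = \lambda^n$ so that $i_n:r_n X\to\ex$ is an isometric embedding onto $f^{n}(X)\subset\ex$, and then check $\sup_n\{\Lip(i_n),\Lip(i_n^{-1})\} < \infty$ — in fact these will be isometries so the supremum is $1$.

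The second bullet — that for each $x\in\ex$ there is a sequence $(x_n)$ in $X$ with $i_n(x_n)\to x$ — then reduces to the density statement: since $i_n(X) = f^n(X)$ (or $f^{-n}(X)$) is $\varepsilon\lambda^{-n}$-dense in $\ex$ and $\varepsilon\lambda^{-n}\to 0$, for any target $x$ I can pick $x_n\in X$ with $d(i_n(x_n),x) < \varepsilon\lambda^{-n}\to 0$, giving $i_n(x_n)\to x$. So the density of $X$ in $\ex$ is amplified by self-similarity into the required convergence.

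The main obstacle I anticipate is the basepoint bookkeeping: the self-similarity bijection $f$ is not assumed to fix $0$, and the definition of good pair demands $i_n(0)=0$. The cleanest fix is to observe that the scaling map $f$ can be modified — or that one works in the pointed category by pre/post-composing with the identification making $0\mapsto 0$ — but if $\ex$ is not homogeneous this is genuinely delicate, since we cannot freely translate $f(0)$ back to $0$ by an isometry. I would resolve this either by strengthening to the case where the self-similarity fixes (or can be arranged to fix) the basepoint, or by showing that the small basepoint displacement $d(i_n(0),0)\to 0$ does not obstruct either the uniform bi-Lipschitz bound or the density, and can be corrected at the level of $\Lip_0$ without changing the isomorphism class. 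Verifying that this correction is harmless, while keeping the Lipschitz constants uniformly bounded, is where the real care is needed; the scaling and density estimates themselves are routine once the basepoint is pinned down.
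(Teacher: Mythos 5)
Your argument is essentially the paper's proof: taking $g=f^{-1}$ (the contracting branch), the maps $i_n=g^n|_X$ are isometric embeddings of $\lambda^n X=(X,d/\lambda^n)$ into $\ex$ whose images are $\varepsilon\lambda^{-n}$-dense, which yields both bullets of the definition of a good pair — so you should simply commit to $i_n=f^{-n}$ with $r_n=\lambda^n$ instead of hedging between $f^n$ and $f^{-n}$. Your basepoint worry is legitimate but your proposed fix via ``$d(i_n(0),0)\to 0$'' will not work (the orbit $g^n(0)$ is Cauchy and, when $\ex$ is complete, converges to the unique fixed point of $g$, which need not be the distinguished point); note that the paper's own proof silently ignores the requirement $i_n(0)=0$ as well, and in every application of the lemma $\ex$ is additionally homogeneous, so one resolves the issue by post-composing each $i_n$ with a surjective isometry of $\ex$ carrying $i_n(0)$ back to $0$, which changes neither the isometric embedding property nor the density of the images.
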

\begin{proof}
Let $\lambda>1$ so that there exists a bijection $f:\ex\to \ex$ with $d(f(x),f(y)) = \lambda d(x,y)$. We will denote the inverse of $f$ by 
$g$ and by $f^n$ and $g^n$ the composition of $n$ functions $f$ and $g$ respectively. 
Since $X$ is $\varepsilon$-dense, for every $x\in\ex$ there is $x'\in X$ with $d(x,x')<\varepsilon$. It follows that for every $n\in\Nat$ and every $x\in\ex$ there is $x'\in X$ with $d(g^n(x),x')<\varepsilon$; hence, 
\[d(x,f^n(x'))=\lambda^{-n}d(g^n(x),g^n(f^n(x')))=\lambda^{-n}d(g^n(x),x')\leq \lambda^{-n}\epsilon.\]
This implies that the mappings $X\ni x\mapsto f^n(x)\in \ex$ are isometric embeddings of $\lambda^{-n} X = (X,\lambda^n\cdot d,0)$ into $\ex$ and, for each $x\in\ex$, there is a sequence $(x_n)_{n=1}^\infty$ in $X$ such that $x_n\in \lambda^{-n} X$ for each $n \in \Nat$ and $x_n \to x$.
\end{proof}

\begin{cor}\label{cor:netsInSSspaces}
For every pointed self-similar doubling homogeneous metric space $\ex$ and every net $X$ in $\ex$ we have $\Lip_0(\ex)\simeq \Lip_0(X)$.
\end{cor}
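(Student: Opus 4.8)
The plan is to establish the two isomorphisms $\Lip_0(\ex)\simeq\bigoplus_{\ell_\infty}\Lip_0(X)$ and $\Lip_0(X)\simeq\bigoplus_{\ell_\infty}\Lip_0(X)$, from which the corollary follows at once by concatenation. First I would record the easy structural facts. Since $X$ is a net it is $\varepsilon$-separated and $\delta$-dense for some $\varepsilon,\delta>0$, and as a subspace of the doubling space $\ex$ it is itself doubling by Fact~\ref{fact:ballsInDoublingspaces}. Because $\ex$ is homogeneous I may move an arbitrary point of $X$ onto the distinguished point, so there is no loss in assuming $0\in X$; then Lemma~\ref{lem:netsInSSspaces} applies and shows that $(X,\ex)$ is a good pair. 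Consequently Proposition~\ref{prop:RdIsomorphicWithLinftyZd} yields $\bigoplus_{\ell_\infty}\Lip_0(\ex)\simeq\bigoplus_{\ell_\infty}\Lip_0(X)$, while Theorem~\ref{thm:isoToEllInftySum} in its homogeneous case (whose hypotheses \eqref{cond:extensionFromBalls} and \eqref{cond:extensionFromCoronas} hold for the doubling space $\ex$, which is unbounded since self-similarity with factor $>1$ forces unboundedness) gives $\Lip_0(\ex)\simeq\bigoplus_{\ell_\infty}\Lip_0(\ex)$. Combining the two I obtain $\Lip_0(\ex)\simeq\bigoplus_{\ell_\infty}\Lip_0(X)$, so the whole statement reduces to proving $\Lip_0(X)\simeq\bigoplus_{\ell_\infty}\Lip_0(X)$.

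For the latter I would try to run the proof of Theorem~\ref{thm:isoToEllInftySum} directly on $M=X$. Conditions \eqref{cond:extensionFromBalls} and \eqref{cond:extensionFromCoronas} hold because $X$ is doubling (Proposition~\ref{prop:complInRd} and Fact~\ref{fact:ballsInDoublingspaces}), and $X$ is uniformly discrete; the obstruction is that the net $X$ is \emph{not} $\lambda$-closed, so neither alternative of that theorem is literally available. I would therefore reprove by hand the missing inclusion $\bigoplus_{\ell_\infty}\Lip_0\big(B(0,r_n)\cap X\big)\complemented\Lip_0(X)$. Fixing $r_n\to\infty$, each ball $B(0,r_n)\cap X$ is totally bounded (doubling) and $\varepsilon$-separated, hence \emph{finite}, and Lemma~\ref{lem:isomorphicSums} gives $\bigoplus_{\ell_\infty}\Lip_0(X)\simeq\bigoplus_{\ell_\infty}\Lip_0\big(B(0,r_n)\cap X\big)$. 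It thus suffices to place bi-Lipschitz copies of these finite balls into pairwise far-apart coronas of $X$.

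This is the heart of the argument, and the step I expect to be the main obstacle. Let $f$ be the self-similarity of $\ex$ scaling distances by $\lambda>1$; composing with an isometry supplied by homogeneity I may assume $f(0)=0$. To imitate the dilation used in the $\lambda$-closed case of Theorem~\ref{thm:isoToEllInftySum} — which is exactly the ingredient $\lambda$-closedness would provide — I would replace the exact dilation by an \emph{approximate} one: blow up $B(0,r_n)\cap X$ by $f^{N_n}$ and then snap every resulting point to a nearest point of the $\delta$-net $X$. Once $N_n$ is large enough that $\lambda^{N_n}\varepsilon\gg\delta$, this snapping is injective and distorts distances (now all at least $\lambda^{N_n}\varepsilon$) only additively by $2\delta$, so it is a bi-Lipschitz embedding into $X$ with constant close to $1$; moreover, since $X$ is $\varepsilon$-separated and $f$ fixes $0$, the basepoint maps to $0$ while every other point lands at distance at least $\tfrac12\lambda^{N_n}\varepsilon$ from $0$, i.e. inside a corona $C_n$ bounded away from $0$. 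Choosing the $N_n$ to increase rapidly makes the coronas satisfy the gap hypothesis of Proposition~\ref{prop:kalton}, and the doubling of $X$ (Proposition~\ref{prop:complInRd}) both splits $\Lip_0\big(\{0\}\cup\bigcup_n C_n\big)\simeq\bigoplus_{\ell_\infty}\Lip_0(\{0\}\cup C_n)$ off as a complemented subspace of $\Lip_0(X)$ and $C$-complements each finite copy inside its corona. Assembling these yields $\bigoplus_{\ell_\infty}\Lip_0\big(B(0,r_n)\cap X\big)\complemented\Lip_0(X)$, hence $\bigoplus_{\ell_\infty}\Lip_0(X)\complemented\Lip_0(X)$; the reverse complementation is trivial, so the Pe\l czy\'nski decomposition method (Theorem~\ref{t:pel}) finishes $\Lip_0(X)\simeq\bigoplus_{\ell_\infty}\Lip_0(X)$, and thereby the corollary. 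The delicate point throughout is to keep the bi-Lipschitz distortions, and hence the isomorphism constants, uniformly bounded in $n$ while simultaneously respecting Kalton's separation of the coronas.
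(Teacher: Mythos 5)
Your argument is correct, but it takes a genuinely different route from the paper's. The paper does not touch the given net at all: it assumes without loss of generality that $\ex$ is isometric to $4\ex$, inductively constructs one particular net $X_0=\{0\}\cup\bigcup_n X_n$ that is $1$-separated, $1$-dense and, crucially, $4$-closed, applies Theorem~\ref{thm:TheIsomorphismTheorem} to the pair $(X_0,\ex)$, and then transfers the conclusion to the arbitrary net by invoking the H\'ajek--Novotn\'y result \cite[Proposition 5]{HN} that any two nets in an unbounded separable metric space have isomorphic $\Lip_0$-spaces. Your first paragraph reproduces in substance the proof of Theorem~\ref{thm:TheIsomorphismTheorem}; the genuinely new part is that you keep the arbitrary net and manufacture the one ingredient its lack of $\lambda$-closedness removes --- the dilation used in case (b) of Theorem~\ref{thm:isoToEllInftySum} --- by an approximate dilation: apply $f^{N_n}$ (normalised by homogeneity to fix $0$) and snap to the nearest net point. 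Your estimates are the right ones: the additive error $2\delta$ is negligible against the blown-up separation $\lambda^{N_n}\varepsilon$, so the snapped map is injective and bi-Lipschitz with constants tending to $1$; the images are doubling with constant at most $D(\ex)^2$ by Fact~\ref{fact:ballsInDoublingspaces}, so Proposition~\ref{prop:complInRd} complements them uniformly inside the coronas; and the Kalton gap condition is arranged by letting $N_n$ grow fast. Hence the scheme of Theorem~\ref{thm:isoToEllInftySum} goes through and gives $\Lip_0(X)\simeq\bigoplus_{\ell_\infty}\Lip_0(X)$ for the given net, which finishes the proof exactly as you say. The trade-off is clear: the paper's proof is shorter but imports \cite[Proposition 5]{HN} as a black box, whereas yours is self-contained relative to the paper's own toolbox (Key Lemma, Kalton's decomposition, Lee--Naor extensions, Pe\l czy\'nski) and shows that hypothesis (b) of Theorem~\ref{thm:isoToEllInftySum} is robust under replacing exact $\lambda$-closedness by the approximate version available in any net of a self-similar homogeneous space.
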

\begin{proof}
We suppose, without loss of generality, that $\ex$ is isometric to $4\ex$. We will inductively construct a net $X$ in $\ex$ which is $4$-closed. Let $X_1$ be a maximal $1$-separated set in $B(0,2)\setminus B(0,1)$. Set $X'_2=4X_1\subseteq B(0,8)\setminus B(0,4)$. Let $X_2$ be any subset of $B(0,8)\setminus B(0,2)$ containing $X'_2$ so that $X_2\cup X_1$ is a maximal $1$-separated set in $B(0,8)\setminus B(0,1)$. We continue analogously; in particular for $n\geq 2$, $4X_{n-1}\subseteq X_n\subseteq B(0,2\cdot 4^{n-1})\setminus B(0,2\cdot 4^{n-2})$ and $\bigcup_{m\leq n} X_m$ is a maximal $1$-separated set in $B(0,2\cdot 4^{n-1})\setminus B(0,1)$. It is easy to check that $X=\{0\}\cup\bigcup_n X_n$ is $1$-dense, $1$-separated and $4$-closed.
Then we apply Theorem \ref{thm:TheIsomorphismTheorem} to get $\Lip_0(\ex)\simeq \Lip_0(X)$. Finally, if $Y\subseteq\ex$ is an arbitrary net in $\ex$, by \cite[Proposition 5]{HN} (using that $\ex$ is unbounded and separable, hence all the nets are infinite and countable), we get that $\Lip_0(Y)\simeq\Lip_0(X)$, and we are done.
\end{proof}

\begin{remark}Let us note that the only property of doubling metric spaces we used in this section is that there exists $C>0$ such that for each subset $N\subset M$ there exists $E\in\Exto(N,M)$ with $\|E\|\leq C$. Quite an impressive number of results devoted to the non/existence of such $C>0$ for various classes of metric spaces is contained in \cite{BruBru}, where another approach than the one from \cite{LN05} is developed and many interesting results are obtained.
\end{remark}

\section{Lipschitz spaces over finitely generated and Lie groups}

In this section we show that the applicability of our tools to show that $\Lip_0(\zet^d)$ is isomorphic with $\Lip_0(\Rea^d)$, for every $d\in\Nat$, is much wider and besides $\Rea^d$ and its discrete subsets one can consider much larger class of natural doubling metric spaces and its canonical nets coming from analysis. The aim of this section is to consider good pairs $(\Gamma,G)$, where $\Gamma$ is a finitely generated group and $G$ is a (simply connected) Lie group. The restriction to Lie groups as (finite-dimensional) metric spaces is not accidental. Indeed, in order to apply Theorem \ref{thm:TheIsomorphismTheorem} and Lemma \ref{lem:netsInSSspaces} we need to work with pointed doubling self-similar homogeneous metric spaces. If one adds the rather natural assumption of being geodesic, there is a precise characterization of such spaces, proved by Le Donne in \cite{lD}:  these are a special class of Lie groups - the \emph{Carnot groups} equipped with a Carnot-Carath\' eodory distance - we refer the reader to \cite{lD2} for an introduction to Carnot groups. The geometric similarity of Carnot groups with the standard finite-dimensional Euclidean spaces is what makes them an object of vital study aiming to generalize many standard analytic and geometric results valid for Euclidean spaces. The Pansu's celebrated theorem which generalizes the Rademacher theorem to Carnot groups is a prime example (see \cite{Pansu2}).

Before stating our results, let us briefly argue how to view finitely generated groups and Lie groups as interesting examples of pointed metric spaces for which it is natural to study the corresponding spaces of Lipschitz functions or Lipschitz-free spaces. It is the founding idea of geometric group theory to view finitely generated groups as pointed metric spaces. Indeed, if $\Gamma$ is a finitely generated group and $S$ is some finite symmetric (i.e. $s\in S$ if and only if $s^{-1}\in S$) generating set, then we may define a \emph{word metric} $d_S$ associated to $S$ and it is a well-known basic fact that whenever $S'$ is another finite symmetric generating set of $\Gamma$, then the metrics $d_S$ and $d_{S'}$ are bi-Lipschitz equivalent. We refer the interested reader to \cite[Section 7.9]{DrutuKapovich} for further details. Hence, to each finitely generated group $\Gamma$ we may associate a canonical space of Lipschitz functions $\Lip_0(\Gamma)$, a canonical Lipschitz-free space $\F(\Gamma)$, and isomorphism classes of these Banach spaces are well-defined.
Similarly, if $G$ is a \emph{Lie group}, i.e. a group which is a smooth manifold equipped with group operations that are smooth maps, we can always define  a left-invariant Riemannian metric (or rather left-invariant distance function associated to that Riemannian metric). Moreover, all such left-invariant Riemannian metrics, provided that $G$ is a connected real Lie group, are bi-Lispchitz equivalent. We refer to \cite[Section 5.6.3 and Example 2.36 (2)]{DrutuKapovich} for the previous two facts. This again allows us to talk about $\Lip_0(G)$ and $\F(G)$ for such groups, and the isomorphism classes of these Banach spaces are well-defined.
\medskip

Consider a good pair $(\Gamma,G)$ of a finitely generated and a Lie group. There are some restrictions on the pairs. As mentioned above, in order to apply Theorem \ref{thm:TheIsomorphismTheorem}, by the Le Donne's characterization, it is natural to consider $G$ as a Carnot group  and have $\Gamma$ sitting, via a bi-Lipschitz embedding, inside $G$ as a net. With respect to Lemma \ref{lem:bilipofmetrics}, it is natural to consider $\Gamma$ as a subgroup of $G$. This implies that $\Gamma$ must be a finitely generated nilpotent torsion-free group (see \cite[Theorem 2.18]{Raghunathan}). Moreover, vice versa, whenever $\Gamma$ is a finitely generated nilpotent torsion-free group, there exists a unique simply-connected Lie group $G$ such that $\Gamma$ is its subgroup and $G/\Gamma$ is compact (see again \cite[Theorem 2.18]{Raghunathan} and \cite[Theorem 13.40]{DrutuKapovich}). Such $G$ is then called the \emph{Mal'cev closure of $\Gamma$}. The following basic fact is well known and in particular says that $\Gamma$ is a net in its Mal'cev closure.  However, we prove it for the convenience of a reader who is not familiar with lattices in Lie groups.
\begin{fact}\label{fact:cocompactlattices}
Let $G$ be a group with a proper metric $d$ and $\Gamma\leq G$ a discrete subgroup. Then $\Gamma$ is a net in $G$ if and only if $G/\Gamma$ is compact.
\end{fact}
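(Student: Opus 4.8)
The plan is to work under the standing assumption, justified by the Lie-group context in which $d$ arises from a left-invariant Riemannian distance, that $d$ is \emph{left-invariant}, so each left translation $L_a\colon g\mapsto ag$ is an isometry; since the intended $G$ is moreover a topological group, inversion and right translations are homeomorphisms, the quotient map $\pi$ is continuous, and the map $g\Gamma\mapsto \Gamma g^{-1}$ is a homeomorphism identifying $G/\Gamma$ with $\Gamma\backslash G$. I may therefore prove the equivalence for $\Gamma\backslash G$ and transfer compactness to $G/\Gamma$. First I would dispose of the separation half of the ``net'' condition: as $\Gamma$ is a discrete subgroup, the identity $e$ is isolated, so $B(e,\varepsilon)\cap\Gamma=\{e\}$ for some $\varepsilon>0$, and left-invariance upgrades this to $d(\gamma,\gamma')=d(e,\gamma^{-1}\gamma')\ge\varepsilon$ for distinct $\gamma,\gamma'\in\Gamma$. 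Hence $\Gamma$ is automatically $\varepsilon$-separated, and being a net reduces to being $\delta$-dense for some $\delta$, i.e. to boundedness of the covering-radius function $\rho(g):=\inf_{\gamma\in\Gamma}d(g,\gamma)$.

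For the implication ``net $\Rightarrow$ compact'' I would use a closed ball as a coarse fundamental domain. If $\Gamma$ is $\delta$-dense, then every coset $\Gamma h$ meets $B(e,\delta)$: choosing $\gamma_0\in\Gamma$ with $d(h,\gamma_0)<\delta$, left-invariance gives $d(e,\gamma_0^{-1}h)=d(\gamma_0,h)<\delta$, so $\gamma_0^{-1}h\in B(e,\delta)$ represents $\Gamma h$. Thus $\pi\big(B(e,\delta)\big)=\Gamma\backslash G$; since $d$ is proper the ball $B(e,\delta)$ is compact, and a continuous image of a compact set is compact, whence $\Gamma\backslash G$ (and so $G/\Gamma$) is compact.

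For the converse ``compact $\Rightarrow$ net'' I would descend $\rho$ to the quotient. The function $\rho$ is $1$-Lipschitz, hence continuous, and it is constant on cosets because $\rho(\gamma g)=\inf_{\beta}d(\gamma g,\beta)=\inf_{\beta}d(g,\gamma^{-1}\beta)=\rho(g)$ by left-invariance. Therefore it factors as $\rho=\bar\rho\circ\pi$ for a function $\bar\rho$ on $\Gamma\backslash G$, and $\bar\rho$ is continuous: by definition of the quotient topology, $\bar\rho^{-1}(V)$ is open precisely when $\rho^{-1}(V)=\pi^{-1}(\bar\rho^{-1}(V))$ is open, which holds as $\rho$ is continuous. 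If $\Gamma\backslash G$ is compact, then $\bar\rho$ attains a maximum, so $\rho$ is bounded and $\Gamma$ is $\delta$-dense, hence a net.

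The argument is short, and no step is a genuine obstacle; the only real care lies in the bookkeeping. With a merely left-invariant metric, which need not be symmetric under inversion, one must keep the correct side of the cosets so that left-invariance actually simplifies the relevant distances (this is why I phrase both directions for $\Gamma\backslash G$ and invoke the homeomorphism $G/\Gamma\cong\Gamma\backslash G$ to match the statement). The subtlest point is simply ensuring that the left-invariance and topological-group hypotheses, implicit in the Lie-group setting, are in force, since without them neither the automatic uniform separation nor the continuous descent of $\rho$ need hold.
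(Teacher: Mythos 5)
Your proof is correct and follows essentially the same route as the paper's: one direction realizes the quotient as a continuous image of a compact ball (using properness and left-invariance), and the other shows the distance-to-$\Gamma$ function is bounded exactly when the quotient is compact, which you phrase directly via descending $\rho$ while the paper argues contrapositively with a sequence $d(g_n,\Gamma)\to\infty$. Your extra care with the separation half and with left versus right cosets only makes explicit what the paper leaves implicit.
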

\begin{proof}
Suppose that $\Gamma$ is a net. Then since it is $\varepsilon$-dense in $G$, the quotient $G/\Gamma$ is a continuous image of the compact ball $B_G(1_G,\varepsilon)$. Conversely, if $\Gamma$ is $\varepsilon$-dense for no $\varepsilon>0$, there is a sequence $(g_n)_n\subseteq G$ with $d(g_n,\Gamma)\to\infty$. Their equivalence classes in the metric quotient $G/\Gamma$ form an unbounded set, so $G/\Gamma$ is not compact.
\end{proof}

We shall never explicitly use the group-theoretic properties such as being nilpotent, therefore we will not define it here and refer the reader to any textbook on group theory (our reference in general for geometric group theory is \cite{DrutuKapovich}). However, for a geometrically oriented reader who is not versed in group theory we note that by the celebrated result of Gromov (\cite{Gromov}), $\Gamma$ being finitely generated and torsion-free nilpotent group is essentially the same as having polynomial growth (i.e. the cardinality of balls in $\Gamma$ grows polynomially). 

The previous discussion leads us to consider pairs $(\Gamma,G)$, where $G$ is the Mal'cev closure of $\Gamma$ and $G$ is isomorphic to a Carnot group. Actually, given $\Gamma$, there is a canonical Carnot group associated to it and it is quite often the case that this Carnot group is isomorphic to the Mal'cev closure of $\Gamma$. Let us give some more details. By Pansu \cite{Pansu}, for every finitely generated nilpotent torsion-free group $\Gamma$ with its word metric, the sequence of rescaled balls $B_{n\Gamma}(e,n)$ converges in the Gromov-Hausdorff distance to the unit ball of a certain Carnot group, further denoted by $G_\infty$. Recall that the Gromov-Hausdorff distance between two metric spaces $(M,d)$ and $(N,e)$ is defined as 
\[\begin{split}
d_{GH}(M,N) := \inf\{d_{H,\rho}(M,N)\setsep & \rho \text{ is a metric on } M\cup N \\
& \text{ extending  the metrics on $M$ and $N$}\},
\end{split}\]
where each $d_{H,\rho}(M,N)$ denotes the Hausdorff distance between $M$ and $N$ in $(M\cup N,\rho)$, that is, 
\[
d_{H,\rho}(M,N):=\max\Big\{\sup_{m\in M} \rho(m,N),\sup_{n\in N} \rho(M,n)\Big\}.
\]
The significant property of the Carnot group $G_\infty$ is that it is bi-Lipschitz isomorphic to any asymptotic cone of $\Gamma$. We refer the interested reader to \cite{BrlD13} for more details. In this text we will say that $G_\infty$ is the \emph{Carnot group associated to $\Gamma$}. In many cases, but not always (we will comment on that later), the Mal'cev closure $G$ is isomorphic to the Carnot group $G_\infty$.

The simplest case is $\Gamma=\zet^d$ as then both the Mal'cev closure $G$ and the Carnot group associated to $\zet^d$ is $\Rea^d$. The most simple non-abelian example is probably the discrete Heisenberg group $H_3(\zet)$ which can be modeled as a group of matrices of the form
\[
\begin{pmatrix}
1 & a & c\\
0 & 1 & b\\
0 & 0 & 1
\end{pmatrix}
\]
where $a,b,c\in\zet$. Then the corresponding Mal'cev closure and the Carnot group is the real Heisenberg group $H_3(\Rea)$ which can be modeled exactly the same, however the coefficients $a,b,c$ are real. We remark that the groups $H_3(\zet)$ and $H_3(\Rea)$ are the first known examples of doubling proper metric spaces that do not bi-Lipschitz embed into $L_1$, see \cite{ChKl}. The bi-Lipschitz non-embeddability of $H_3(\zet)$ had been an active research area before (see \cite{CK06} for non-embeddability into all Banach spaces with the Radon-Nikod\' ym property) and it continues to be relevant (see \cite{NL14} and \cite{NY18} for the recent developments). Notice that we immediately get that $\F(H_3(\zet))\not\simeq \F(\zet)$ and $\F(H_3(\Rea))\not\simeq \F(\Rea)$. However, we are unable to say at this moment anything about the relation of $\Lip_0(H_3(\zet))$ and $\Lip_0(H_3(\Rea))$ with $\Lip_0(\Rea^d)$ for any $d\geq 1$.

The following is the main result of this section whose proof is, using the tools developed in the previous section, rather straightforward.
\begin{thm}\label{thm:LipofCarnot}
Let $\Gamma$ be a finitely generated nilpotent torsion-free group, $G$ its Mal'cev closure and $G_\infty$ its associated Carnot group. Suppose that $G$ and $G_\infty$ are isomorphic. Then $\Lip_0(\Gamma)\simeq \Lip_0(G_\infty)$ ($=\Lip_0(G)$).
\end{thm}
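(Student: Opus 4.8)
The plan is to exhibit $\Gamma$ as a net inside $G_\infty$ (equipped with its Carnot--Carath\'eodory metric $d_{CC}$) and then read off the conclusion from Corollary~\ref{cor:netsInSSspaces}. The first thing I would record is that $G_\infty$ carries all the structure that corollary demands: a Carnot group admits dilations $\delta_\lambda$ for every $\lambda>0$, which satisfy $d_{CC}(\delta_\lambda x,\delta_\lambda y)=\lambda\,d_{CC}(x,y)$ and fix the identity, so $(G_\infty,d_{CC})$ is pointed and self-similar; its left translations act transitively by isometries, so it is homogeneous; and it is doubling and geodesic. These are standard facts about Carnot groups (see \cite{lD,lD2}).

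Next I would place $\Gamma$ inside $G_\infty$. By definition of the Mal'cev closure, $\Gamma$ is a discrete subgroup of $G$ with $G/\Gamma$ compact; the hypothesis $G\cong G_\infty$ lets me transport this lattice along the isomorphism and regard $\Gamma$ as a discrete cocompact subgroup of $G_\infty$, so that $G_\infty/\Gamma$ is compact. Fact~\ref{fact:cocompactlattices} then tells me that $\Gamma$ is a net in $(G_\infty,d_{CC})$.

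The substantive geometric point---and the step in which the hypothesis $G\cong G_\infty$ is genuinely used---is that the word metric of $\Gamma$ is bi-Lipschitz equivalent to the restriction of $d_{CC}$ to $\Gamma$; this is exactly the content of Lemma~\ref{lem:bilipofmetrics}. The idea is that $\Gamma$ acts on the geodesic space $(G_\infty,d_{CC})$ by left translations, which are isometries, and this action is properly discontinuous and cocompact, so the Milnor--\v{S}varc lemma identifies the word metric on $\Gamma$ with $d_{CC}|_\Gamma$ up to a quasi-isometry; since $\Gamma$ is uniformly discrete, this quasi-isometry upgrades to a genuine bi-Lipschitz equivalence. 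I expect this to be the main obstacle: it is precisely here that one cannot drop $G\cong G_\infty$, because when the Mal'cev closure and the associated Carnot group differ, Pansu's theorem yields only an asymptotic (quasi-isometric) comparison and no global bi-Lipschitz identification need exist.

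Finally, Corollary~\ref{cor:netsInSSspaces} applied to the pointed self-similar doubling homogeneous space $\ex=G_\infty$ and the net $\Gamma\subseteq G_\infty$ gives $\Lip_0(G_\infty)\simeq\Lip_0(\Gamma)$, where $\Gamma$ is first taken with the restricted metric $d_{CC}|_\Gamma$. Composing with the linear isomorphism of $\Lip_0$ spaces induced by the bi-Lipschitz equivalence between $d_{CC}|_\Gamma$ and the word metric, and using that $G\cong G_\infty$ yields $\Lip_0(G)=\Lip_0(G_\infty)$, I obtain $\Lip_0(\Gamma)\simeq\Lip_0(G_\infty)=\Lip_0(G)$, as desired.
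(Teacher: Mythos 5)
Your proof is correct and follows essentially the same route as the paper: realize $\Gamma$ as a cocompact lattice, hence a net, in $G_\infty$ via Fact~\ref{fact:cocompactlattices}, use Lemma~\ref{lem:bilipofmetrics} (Milnor--\v{S}varc plus uniform discreteness) to identify the word metric with $d_{CC}|_\Gamma$ up to bi-Lipschitz equivalence, and then invoke the net machinery of Section~\ref{sec:principles}. The only cosmetic difference is that you route through Corollary~\ref{cor:netsInSSspaces}, whereas the paper verifies directly via Lemma~\ref{lem:netsInSSspaces} that $(\Gamma,G_\infty)$ is a good pair and applies Theorem~\ref{thm:TheIsomorphismTheorem}; since the corollary is itself a packaging of that theorem, the arguments coincide.
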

Let us prove a simple lemma first. Recall that a proper metric space is such that its closed balls are compact.
\begin{lemma}\label{lem:bilipofmetrics}
Let $G$ be a group with a left-invariant metric $d$ such that $(G,d)$ is proper and geodesic, and let $\Gamma\leq G$ be a subgroup which is a net in $(G,d)$. Then the restriction of $d$ on $\Gamma$ is bi-Lipschitz equivalent to the word metric on $\Gamma$.
\end{lemma}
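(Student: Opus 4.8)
\emph{Overall approach.} The statement is a quantitative, bi-Lipschitz refinement of the Milnor--\v{S}varc lemma: the left-translation action of $\Gamma$ on $(G,d)$ is by isometries (left-invariance of $d$), it is properly discontinuous (the net $\Gamma$ is uniformly discrete and $(G,d)$ is proper) and cocompact (the net is $\delta$-dense, equivalently $G/\Gamma$ is compact by Fact~\ref{fact:cocompactlattices}). Rather than invoke that lemma as a black box, which only yields a quasi-isometry, I would run the standard argument directly while keeping track of constants. Write $e$ for the identity of $G$ and suppose $\Gamma$ is $\varepsilon$-separated and $\delta$-dense. The plan is first to produce a finite symmetric generating set together with an upper bound on word length by $d$, then to prove the reverse bound, and finally to upgrade the resulting quasi-isometry to a genuine bi-Lipschitz equivalence.

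\emph{Finite generation and the bound $d_S \lesssim d$.} For the generating set I would take $S:=\{\gamma\in\Gamma\setsep d(e,\gamma)\le 3\delta\}$; this is finite because $B(e,3\delta)$ is compact (properness) and therefore contains only finitely many points of the $\varepsilon$-separated set $\Gamma$, and it is symmetric since $d(e,\gamma)=d(e,\gamma^{-1})$ by left-invariance. Given $\gamma\in\Gamma$, join $e$ to $\gamma$ by a geodesic (here I use that $(G,d)$ is geodesic), subdivide it into $k\le d(e,\gamma)/\delta+1$ arcs of length at most $\delta$ with endpoints $e=p_0,\dots,p_k=\gamma$, and use $\delta$-density to choose $\gamma_i\in\Gamma$ with $d(p_i,\gamma_i)<\delta$, setting $\gamma_0=e$ and $\gamma_k=\gamma$. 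Then $d(\gamma_{i-1},\gamma_i)<3\delta$, so $s_i:=\gamma_{i-1}^{-1}\gamma_i\in S$ and $\gamma=s_1\cdots s_k$; hence $S$ generates $\Gamma$ and, writing $d_S$ for the word metric, $d_S\le \tfrac1\delta\, d|_\Gamma+1$. Since both $d$ and $d_S$ are left-invariant it suffices throughout to compare word length with $d(e,\cdot)$.

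\emph{The reverse bound and upgrading to bi-Lipschitz.} The inequality $d\lesssim d_S$ is immediate: if $\gamma=s_1\cdots s_n$ with $n=d_S(e,\gamma)$, then by the triangle inequality and left-invariance $d(e,\gamma)\le\sum_i d(e,s_i)\le 3\delta\, n$, giving $d|_\Gamma\le 3\delta\, d_S$. At this point I would have a quasi-isometry, and the main (indeed the only genuine) obstacle is that a quasi-isometry is strictly weaker than a bi-Lipschitz map because of the additive constant in $d_S\le\tfrac1\delta d|_\Gamma+1$. The observation that closes this gap is that both metrics are uniformly bounded below on distinct points: $d_S\ge 1$ and $d|_\Gamma\ge\varepsilon$. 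Hence for $\gamma\ne\gamma'$ the additive $1$ is absorbed via $1\le\tfrac1\varepsilon d|_\Gamma(\gamma,\gamma')$, yielding $d_S\le(\tfrac1\delta+\tfrac1\varepsilon)\,d|_\Gamma$; combined with $d|_\Gamma\le 3\delta\,d_S$ this produces the two-sided estimate $\tfrac1{3\delta}\,d|_\Gamma\le d_S\le(\tfrac1\delta+\tfrac1\varepsilon)\,d|_\Gamma$ on $\Gamma$, which is exactly the claimed bi-Lipschitz equivalence, with explicit constants.
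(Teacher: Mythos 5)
Your proof is correct and follows essentially the same route as the paper: the paper invokes the Milnor--\v{S}varc theorem to get a quasi-isometry between $d|_\Gamma$ and the word metric and then upgrades to bi-Lipschitz using uniform discreteness of both metrics, which is exactly your final absorption step. The only difference is that you unpack the Milnor--\v{S}varc argument with explicit constants instead of citing it, which is a harmless (and self-contained) elaboration.
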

\begin{proof}
Let $\Gamma$ act on $(G,d)$ by the left multiplication. This is obviously an action by isometries which is moreover geometric. That is, it is cobounded (this follows since $\Gamma$ is $\varepsilon$-dense for some $\varepsilon>0$) and it is properly discontinuous, meaning that for every compact  $B\subseteq G$, the set $\{\gamma\in\Gamma\setsep \gamma\cdot B\cap B\neq \emptyset\}$ is finite. This follows since $\Gamma$ is $\delta$-separated for some $\delta>0$. Now it follows from the Milnor-\v Svarc theorem (see \cite[Theorem 8.37(2)]{DrutuKapovich}) that the word metric on $\Gamma$ and $d$ restricted on $\Gamma$ are quasi-isometric. It is then a basic fact left for the reader that a bijection between two uniformly discrete metric spaces, the identity on $\Gamma$ in our case, is quasi-isometric if and only if it is bi-Lipchitz.
\end{proof}
\begin{proof}[Proof of Theorem \ref{thm:LipofCarnot}]
Fix some $\Gamma$, $G$ and $G_\infty$ as above. Since $\Gamma\leq G$ is a net by Fact \ref{fact:cocompactlattices}, we get that the metric on $\Gamma$ obtained as a restriction of the Carnot-Carath\' edory distance on $G_\infty$ is bi-Lipschitz equivalent with a word metric on $\Gamma$, applying Lemma \ref{lem:bilipofmetrics}.

It then follows by Lemma \ref{lem:netsInSSspaces} that $(\Gamma,G_\infty)$ is a good pair. We are now in position to apply Theorem \ref{thm:TheIsomorphismTheorem} to conclude that $\Lip_0(\Gamma)\simeq\Lip_0(G_\infty)$.
\end{proof}

\begin{remark}
Let us note that it would be sufficient to assume in the statement of Theorem \ref{thm:LipofCarnot} that $\Gamma$ bi-Lipschitz embeds into $G_\infty$ as a net. However, there are indications that it would be no more general. First of all, although we cannot prove it in general, all such `natural' bi-Lipschitz embeddings would imply that $\Gamma$ (with its word metric) would be quasi-isometric to $G_\infty$. This is for example always the case when $\Gamma$ embeds as a subgroup. In particular, $\Gamma$ is quasi-isometric to $G$. If $G$ and $G_\infty$ were not isomorphic, it would therefore give a pair of quasi-isometric but not isomorphic nilpotent Lie groups. That would be against one of the main conjectures in metric geometry of nilpotent Lie groups (see e.g. \cite[Problem 25.44]{DrutuKapovich}).
\end{remark}

A characterization of those finitely-generated nilpotent torsion-free groups $\Gamma$ for which $G$ is isomorphic to  $G_\infty$ was provided by de Cornulier in \cite[Theorem 1.8]{deCor} as those groups for which its systolic growth is asymptotically equivalent to its word growth. We refer the reader therein for an explanation of these notions and for a review when this happens in small dimensions.

Theorem~\ref{thm:LipofCarnot} as a direct generalization of Theorem~\ref{cor:RAndZIso} in particular applies to Euclidean spaces or to the Heisenberg matrices mentioned above.
\begin{cor}
For any $d\in\Nat$ we have $\Lip_0(\zet^d)\simeq \Lip_0(\Rea^d)$. Also, $\Lip_0(H_3(\zet))\simeq \Lip_0(H_3(\Rea))$.
\end{cor}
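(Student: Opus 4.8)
The plan is to obtain both isomorphisms as immediate instances of Theorem~\ref{thm:LipofCarnot}, so that the work reduces to checking that in each case the hypotheses of that theorem are met, and in particular that the Mal'cev closure $G$ coincides with the associated Carnot group $G_\infty$.

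First I would treat $\Gamma = \zet^d$. This is a finitely generated, torsion-free, nilpotent (indeed abelian) group, so Theorem~\ref{thm:LipofCarnot} is applicable in principle. Its Mal'cev closure is $G = \Rea^d$, and since $\zet^d$ carries no nontrivial nilpotency and its word metric rescales to the Euclidean metric, the associated Carnot group $G_\infty$ is again $\Rea^d$; this is the simplest case, already recorded in the discussion preceding the theorem. Thus $G \cong G_\infty \cong \Rea^d$, the hypothesis $G \cong G_\infty$ holds trivially, and Theorem~\ref{thm:LipofCarnot} yields $\Lip_0(\zet^d) \simeq \Lip_0(\Rea^d)$.

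Next I would treat $\Gamma = H_3(\zet)$, the discrete Heisenberg group. It is finitely generated, torsion-free and nilpotent (of class $2$), so again Theorem~\ref{thm:LipofCarnot} applies once its hypotheses are verified. Here the Mal'cev closure is the real Heisenberg group $G = H_3(\Rea)$, and the associated Carnot group $G_\infty$ is likewise $H_3(\Rea)$ equipped with its Carnot-Carath\'eodory distance; this is the classical fact already noted in the preceding discussion. Hence $G \cong G_\infty$ and Theorem~\ref{thm:LipofCarnot} gives $\Lip_0(H_3(\zet)) \simeq \Lip_0(H_3(\Rea))$.

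The only substantive point---the potential obstacle---is the identification $G \cong G_\infty$ in each case, that is, that the Mal'cev closure agrees with the associated Carnot group. For $\zet^d$ this is transparent. For $H_3(\zet)$ it rests on the standard computation that the Pansu limit of the rescaled balls $B_{n\Gamma}(e,n)$ of the discrete Heisenberg group is the real Heisenberg group with its sub-Riemannian metric. Since both identifications have already been stated in the text preceding Theorem~\ref{thm:LipofCarnot}, no further argument is required, and the corollary follows.
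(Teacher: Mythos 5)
Your proof is correct and follows exactly the paper's route: the corollary is obtained as a direct instance of Theorem~\ref{thm:LipofCarnot}, using the identifications (already recorded in the discussion preceding that theorem) that for $\Gamma=\zet^d$ both the Mal'cev closure and the associated Carnot group are $\Rea^d$, and for $\Gamma=H_3(\zet)$ both are $H_3(\Rea)$. The only cosmetic quibble is that the rescaled word metric on $\zet^d$ converges to an $\ell_1$-type norm rather than the Euclidean one, but this is irrelevant since the associated Carnot group is $\Rea^d$ in either case.
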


Quite differently than in infinite-dimensional Banach spaces, where all nets are bi-Lipschitz equivalent, it has been shown that for every $d\geq 2$, $\Rea^d$ contains bi-Lipschitz non-equivalent nets (see \cite{BuKl}); in particular, nets not bi-Lipschitz equivalent with $\zet^d$. Recently, this last result has been generalized to the realm of connected simply connected nilpotent Lie groups (see \cite{DKLL}), where the authors show that each such group with left-invariant Riemannian metric contains bi-Lipschitz non-equivalent nets. Taking into account the result of H\' ajek and Novotn\' y that for every unbounded separable metric space $M$ and nets $\Net_1,\Net_2\subseteq M$, we have $\Lip_0(\Net_1)\simeq\Lip_0(\Net_2)$ (see \cite[Proposition 5]{HN}), we record the following corollary.
\begin{cor}
Let $\Gamma$, $G$ and $G_\infty$ be as in Theorem \ref{thm:LipofCarnot}, and again suppose that $G$ and $G_\infty$ are isomorphic. Then for every net $\Net\subseteq G$ we have $\Lip_0(\Net)\simeq\Lip_0(G)$.
\end{cor}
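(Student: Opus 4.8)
The plan is to combine Theorem~\ref{thm:LipofCarnot} with the H\'ajek--Novotn\'y transfer principle for nets quoted immediately above the statement. Under the standing hypothesis that $G$ and $G_\infty$ are isomorphic, Theorem~\ref{thm:LipofCarnot} already delivers $\Lip_0(\Gamma)\simeq\Lip_0(G)$, so it suffices to prove $\Lip_0(\Net)\simeq\Lip_0(\Gamma)$ for an arbitrary net $\Net\subseteq G$ and then chain the two isomorphisms. The whole weight of the corollary thus rests on two inputs that are already available, and the only genuine task is to check that $\Gamma$ and $\Net$ both qualify as nets in a common unbounded separable metric space.

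First I would record that $\Gamma$ is itself a net in $G$. This is exactly Fact~\ref{fact:cocompactlattices}: $\Gamma$ is a discrete subgroup of $G$ and, being the Mal'cev closure of $\Gamma$, the quotient $G/\Gamma$ is compact, whence $\Gamma$ is a net. Consequently both $\Net$ and $\Gamma$ are nets in the single ambient metric space $G$.

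Next I would verify that $G$ is an unbounded separable metric space, the standing requirement for invoking \cite[Proposition 5]{HN}. Separability is automatic because $G$ is a (second countable) Lie group. For unboundedness, note that $G$ equipped with its Carnot--Carath\'eodory distance is proper, and $\Gamma$ is a nontrivial torsion-free, hence infinite, group; its word metric is therefore unbounded, and by Lemma~\ref{lem:bilipofmetrics} it is bi-Lipschitz equivalent to the restriction of the metric of $G$ to $\Gamma$, so $G\supseteq\Gamma$ is unbounded. (Equivalently, a non-compact proper metric space cannot be bounded, and $G$ is non-compact since $\Gamma$ is infinite.) With $G$ unbounded and separable, applying \cite[Proposition 5]{HN} to the two nets $\Net$ and $\Gamma$ yields $\Lip_0(\Net)\simeq\Lip_0(\Gamma)$.

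Finally, combining $\Lip_0(\Net)\simeq\Lip_0(\Gamma)$ with $\Lip_0(\Gamma)\simeq\Lip_0(G)$ from Theorem~\ref{thm:LipofCarnot} closes the argument. There is essentially no hard step: the main obstacle is purely bookkeeping, namely confirming that $\Gamma$ is a net and that $G$ is unbounded and separable so that the net-transfer principle applies. The only degenerate case one must tacitly rule out is $\Gamma$ trivial (so that $G$ is a point), which is excluded by the torsion-free nontrivial hypothesis and would anyway make the statement vacuous.
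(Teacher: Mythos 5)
Your argument is correct and is precisely the one the paper intends: the corollary is recorded as an immediate consequence of Theorem~\ref{thm:LipofCarnot} combined with the H\'ajek--Novotn\'y net-transfer result \cite[Proposition 5]{HN}, applied to the two nets $\Net$ and $\Gamma$ in the unbounded separable space $G$. Your additional bookkeeping (Fact~\ref{fact:cocompactlattices} to see that $\Gamma$ is a net, and the verification that $G$ is unbounded and separable) only makes explicit what the paper leaves implicit.
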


In contrast to Theorem \ref{thm:LipofCarnot} relating the spaces of Lipschitz functions on a connected Lie group and its discrete lattice, we have the following observation for their preduals.

\begin{fact}\label{prop:freeOverRdAndZdnotIso}
Let $M$ be a metric space such that $[0,1]$ embeds bi-Lipschitz into $M$. Let $N$ be a countable metric space which is uniformly discrete or proper. Then $\F(M)\not\simeq \F(N)$.

In particular, for any $d\in\Nat$, $\F(\zet^d)\not\simeq \F(\Rea^d)$. Or even more generally, for any finitely generated nilpotent torsion-free group $\Gamma$ and its Carnot group $G_\infty$, $\F(\Gamma)\not\simeq \F(G_\infty)$.
\end{fact}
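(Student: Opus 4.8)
The plan is to separate $\F(M)$ from $\F(N)$ by a linear-topological invariant that one possesses and the other lacks; the natural choice is the \emph{Schur property}. Concretely, I would show that $\F(M)$ contains an isomorphic copy of $L_1$, and therefore fails the Schur property, while by a theorem of Kalton \cite{K04} the space $\F(N)$ \emph{has} the Schur property whenever $N$ is countable and either uniformly discrete or proper. Since the Schur property is preserved under isomorphisms, this gives $\F(M)\not\simeq\F(N)$. (One could equally well run the argument with the Radon--Nikod\'ym property, which $L_1$ also lacks.)

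For the first half, let $\varphi\colon[0,1]\to M$ be a bi-Lipschitz embedding and put $Y:=\varphi([0,1])$; choosing the base point of $M$ to be $\varphi(0)$, which does not affect the isomorphism class of $\F(M)$, we arrange $0\in Y$. I would first produce a Lipschitz retraction $R\colon M\to Y$ explicitly: extend the Lipschitz map $\varphi^{-1}\colon Y\to[0,1]\subseteq\Rea$ to a real-valued Lipschitz map $\psi\colon M\to\Rea$ by the McShane extension theorem, compose with the $1$-Lipschitz clamping $c(t):=\max\{0,\min\{1,t\}\}$, and set $R:=\varphi\circ c\circ\psi$; for $y\in Y$ one has $\psi(y)=\varphi^{-1}(y)\in[0,1]$, whence $R(y)=y$, so $R$ is indeed a Lipschitz retraction onto $Y$. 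The operator $E(f):=f\circ R$ then belongs to $\Ext(Y,M)$, so by Fact~\ref{fact:extensions} the space $\F(Y)$ is isomorphic to a complemented subspace of $\F(M)$. As $Y$ is bi-Lipschitz equivalent to $[0,1]$ we get $\F(Y)\simeq\F([0,1])\simeq L_1$, the last isomorphism being classical. Finally $L_1$ fails the Schur property (the Rademacher functions form a weakly null sequence of norm-one elements), and the Schur property passes to subspaces, so $\F(M)$ fails it as well.

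For the second half I would invoke Kalton's result \cite{K04} that the free space over a countable uniformly discrete, respectively a countable proper, metric space has the Schur property; this is the step that needs the most care, since one must confirm that the cited statement indeed covers both cases in exactly the form used here. The countability hypothesis is genuinely necessary in the proper case, as $\Rea$ is proper yet $\F(\Rea)\simeq L_1$ has no Schur property. With both halves in hand the general statement follows. For the displayed special cases it remains only to check the hypotheses: $[0,1]$ embeds isometrically into $\Rea^d$ as a coordinate segment and bi-Lipschitz into the Carnot group $G_\infty$ (a nondegenerate geodesic segment, reparametrised by arc length, is an isometric copy of an interval), whereas $\zet^d$ and any finitely generated $\Gamma$ with a word metric are countable and $1$-separated, hence countable and uniformly discrete; applying the general statement yields $\F(\zet^d)\not\simeq\F(\Rea^d)$ and $\F(\Gamma)\not\simeq\F(G_\infty)$.
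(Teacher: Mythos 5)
Your overall strategy coincides with the paper's: both proofs push $L_1\simeq\F([0,1])$ into $\F(M)$ via the bi-Lipschitz copy of $[0,1]$, and then separate $\F(M)$ from $\F(N)$ by a hereditary isomorphic invariant that $L_1$ fails. (Your explicit McShane-retraction construction is correct but more than is needed: since both the Schur property and the Radon--Nikod\'ym property pass to closed subspaces, the isomorphic embedding $\F(Y)\hookrightarrow\F(M)$ for the subset $Y=\varphi([0,1])$ already suffices, and complementation plays no role.) The point of divergence is the choice of invariant. The paper uses the Radon--Nikod\'ym property in the uniformly discrete case, via \cite[Proposition 4.4]{K04}, and in the proper case the fact that $\F(N)$ is a dual space, via \cite[Theorem 2.1]{Da}, combined with the observation that a separable space failing the RNP cannot be isomorphic to a dual space. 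You instead use the Schur property for both cases. That is fine for uniformly discrete $N$ --- Kalton's Proposition 4.4 does assert the Schur property there --- but your citation does not cover the proper case: nothing in \cite{K04} gives the Schur property for free spaces over countable proper metric spaces. The statement you need is true, but it is a later theorem of H\'ajek, Lancien and Perneck\'a (\emph{Approximation and Schur properties for Lipschitz free spaces over compact metric spaces}, Bull. Belg. Math. Soc. Simon Stevin 23 (2016)), obtained for countable compacta and extended to the proper case through the $\ell_1$-decomposition of Proposition~\ref{prop:kalton}. You flagged this step yourself as the one needing verification, and indeed as written it rests on a reference that does not contain the claim; either import the H\'ajek--Lancien--Perneck\'a theorem, or switch invariants in the proper case to ``being a separable dual space,'' which is exactly what Dalet's result supplies and what the paper does. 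The verification of the ``in particular'' instances is the same in both arguments.
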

\begin{proof}Since $\F([0,1])$ is isometric to $L_1$, we have $L_1\hookrightarrow \F(M)$. Therefore, $\F(M)$ is a separable Banach space which fails the Radon-Nikod\'ym property and in particular, it is not isomorphic to a dual space.
On the other hand, by \cite[Proposition 4.4]{K04}, if $N$ is a uniformly discrete space, it has the Radon-Nikod\'ym property and by \cite[Theorem 2.1]{Da}, if $N$ is countable and proper, it is a dual space.

The ``in particular'' part follows, which is clear since every  finitely generated nilpotent torsion-free group $\Gamma$ is a countable proper metric space and since $G_\infty$ is geodesic, $[0,1]$ embeds isometrically into $G_\infty$.
\end{proof}

\begin{remark}
Notice that this generalizes the old result that $\ell_1$ and $L_1([0,1])$ are not isomorphic, while the spaces $\ell_\infty$ and $L_\infty([0,1])$ are (\cite{Pe}). It follows from results of Christensen that no isomorphism between $\ell_\infty$ and $L_\infty([0,1])$ is $w^*$-measurable and so it cannot be explicitly definable, see e.g. \cite[Chapters 5 and 6]{Ch74}. It could be interesting to investigate whether the same phenomenon occurs for isomorphisms between $\Lip_0(\Gamma)$ and $\Lip_0(G)$, where $\Gamma$ and $G$ are from the statement of Theorem \ref{thm:LipofCarnot}.
\end{remark}

One may wonder what we can prove with our methods when for a given $\Gamma$ the corresponding Lie groups $G$ and $G_\infty$ are not isomorphic. We can still deduce the following.
\begin{prop}\label{michal}
Let $\Gamma$, $G$ and $G_\infty$ be as in Theorem \ref{thm:LipofCarnot}. Then $\Lip_0(G_\infty)\complemented\Lip_0(\Gamma)\complemented\Lip_0(G)$.
\end{prop}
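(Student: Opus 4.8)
The plan is to prove the two complementations separately, since they rest on genuinely different principles: the right-hand one $\Lip_0(\Gamma)\complemented\Lip_0(G)$ is a soft consequence of the doubling extension machinery, whereas the left-hand one $\Lip_0(G_\infty)\complemented\Lip_0(\Gamma)$ requires Pansu's asymptotic-cone theorem fed into the Key Lemma. For $\Lip_0(\Gamma)\complemented\Lip_0(G)$ I would argue as follows. The Mal'cev closure $G$ is a simply connected nilpotent Lie group, hence has polynomial growth and is a doubling metric space. By Fact~\ref{fact:cocompactlattices} the subgroup $\Gamma$ is a net in $G$, and by Lemma~\ref{lem:bilipofmetrics} the restriction of the left-invariant metric of $G$ to $\Gamma$ is bi-Lipschitz equivalent to the word metric; consequently $\Lip_0(\Gamma)$ is isomorphic to $\Lip_0$ of $\Gamma$ regarded as a subspace of $G$. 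Since a subspace of a doubling space is doubling (Fact~\ref{fact:ballsInDoublingspaces}), Proposition~\ref{prop:complInRd} furnishes some $E\in\Ext(\Gamma,G)$, and Fact~\ref{fact:extensions} then gives $\Lip_0(\Gamma)\complemented\Lip_0(G)$.

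For $\Lip_0(G_\infty)\complemented\Lip_0(\Gamma)$ the crucial point is that $(\Gamma,G_\infty)$ is a good pair, with $\Gamma$ carrying its word metric. I would extract this from Pansu's theorem (\cite{Pansu}): the rescalings $r_n\Gamma$ of $\Gamma$ converge in the Gromov--Hausdorff sense to $G_\infty$ equipped with its Carnot--Carath\'eodory distance. Realizing the approximating maps through the homogeneous dilations of $G_\infty$, one obtains basepoint-preserving injections $i_n\colon r_n\Gamma\to G_\infty$ whose distortion is uniformly bounded --- nearest-neighbour pairs (at distance $1/r_n$) contribute only a scale-independent constant, while for pairs at large word distance the comparison ratio tends to $1$ --- and whose images have dense union. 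Thus $(\Gamma,G_\infty)$ is a good pair and Proposition~\ref{prop:PropForGoodPairs} yields $\Lip_0(G_\infty)\complemented\bigoplus_{\ell_\infty}\Lip_0(\Gamma)$. Equivalently, one may feed the $i_n$, whose inverses are $(D_2,\varepsilon_n)$-Coarse Lipschitz with $\varepsilon_n\to 0$, directly into the Key Lemma~\ref{lem:key}.

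To collapse the $\ell_\infty$-sum I would invoke Theorem~\ref{thm:isoToEllInftySum}. The word metric is left-invariant, so $\Gamma$ is homogeneous and unbounded, and being a net in the doubling group $G$ it is doubling; hence assumptions \eqref{cond:extensionFromBalls} and \eqref{cond:extensionFromCoronas} hold by Proposition~\ref{prop:complInRd} and Fact~\ref{fact:ballsInDoublingspaces}, and condition (a) applies. Therefore $\Lip_0(\Gamma)\simeq\bigoplus_{\ell_\infty}\Lip_0(\Gamma)$, and combining with the previous paragraph gives $\Lip_0(G_\infty)\complemented\bigoplus_{\ell_\infty}\Lip_0(\Gamma)\simeq\Lip_0(\Gamma)$, as required.

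The main obstacle is the good-pair step. Gromov--Hausdorff convergence by itself only provides approximate isometries carrying an additive error, whereas the Key Lemma needs the forward maps $i_n$ to be honestly Lipschitz; the delicate interaction is between this additive error and the nearest-neighbour scale $1/r_n$ of $r_n\Gamma$. The way around it is precisely to build $i_n$ from the homogeneous dilations of $G_\infty$, so that the small-scale distortion is controlled independently of $n$, reducing matters to the standard (but genuinely group-theoretic) multiplicative form of Pansu's comparison at large scales. Checking that $G$ is doubling and that the images of the $i_n$ are dense are the remaining routine points.
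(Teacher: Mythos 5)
Your right-hand complementation $\Lip_0(\Gamma)\complemented\Lip_0(G)$ and your collapsing step $\Lip_0(\Gamma)\simeq\bigoplus_{\ell_\infty}\Lip_0(\Gamma)$ (homogeneity of the word metric, doubling, Theorem~\ref{thm:isoToEllInftySum}) are correct and agree with the paper. The gap is the middle step: you assert that $(\Gamma,G_\infty)$ is a good pair, i.e.\ that there exist injections $i_n\colon r_n\Gamma\to G_\infty$ defined on \emph{all} of $\Gamma$ with uniformly bounded distortion, built ``through the homogeneous dilations of $G_\infty$''. But the dilations act on $G_\infty$, so to dilate anything you first need a map $\Gamma\to G_\infty$, and none is available here: Proposition~\ref{michal} is aimed precisely at the case $G\not\simeq G_\infty$, where $\Gamma$ is a subgroup of $G$ but not of $G_\infty$, and Pansu's theorem provides only Gromov--Hausdorff convergence of rescaled balls --- an additive-error correspondence, not a map. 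Moreover, a uniformly bi-Lipschitz $i_n\colon r_n\Gamma\to G_\infty$ would (after composing with a dilation of $G_\infty$) give a bi-Lipschitz embedding of $\Gamma$, hence a quasi-isometric embedding of $G$, into $G_\infty$; as the Remark following Theorem~\ref{thm:LipofCarnot} explains, for $G\not\simeq G_\infty$ this is exactly the territory of one of the main open conjectures on nilpotent Lie groups. So the good-pair route cannot be expected to go through in the stated generality, and your heuristic that ``the comparison ratio tends to $1$ at large word distance'' is not something GH convergence of balls gives you.

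The paper's fix is to weaken the embedding requirement to finite scales and to reverse its direction. For each $m$ choose $n_m$ with $d_{GH}\bigl(B_{G_\infty}(e,m),B_{n_m\Gamma}(e,mn_m)\bigr)<1/m$, pick a $4/m$-separated net $\Net_m$ in $B_{G_\infty}(e,m)$, and use the GH correspondence to send $\Net_m$ injectively into $B_{n_m\Gamma}(e,mn_m)$ with pointwise error $<1/m$. Since the resulting additive error $2/m$ is at most half the separation $4/m$, this map is bi-Lipschitz with constants in $[1/2,3/2]$ uniformly in $m$ --- this is exactly how the additive-versus-multiplicative tension you identified is resolved, and it uses nothing beyond GH convergence. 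Feeding the inverses of these maps into the Key Lemma~\ref{lem:key} gives $\Lip_0(G_\infty)\complemented\bigoplus_{\ell_\infty}\Lip_0(\Net'_m)$, where the images $\Net'_m\subset n_m\Gamma$ are doubling with uniformly bounded constant; Proposition~\ref{prop:complInRd} then complements each $\Lip_0(\Net'_m)$ uniformly in $\Lip_0(n_m\Gamma)$, which is isometric to $\Lip_0(\Gamma)$. Combining with $\Lip_0(\Gamma)\simeq\bigoplus_{\ell_\infty}\Lip_0(\Gamma)$ yields $\Lip_0(G_\infty)\complemented\Lip_0(\Gamma)$ as required.
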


\begin{proof}
First of all, since $\Gamma\leq G$ and $G/\Gamma$ is compact, we get, using Lemma \ref{lem:bilipofmetrics}, that $\Gamma$ with its word metric bi-Lipschitz embeds into $G$. So by Proposition \ref{prop:complInRd}, since $\Gamma$ is a doubling subspace of $G$, we get $\Lip_0(\Gamma)\complemented\Lip_0(G)$. So we must show that $\Lip_0(G_\infty)\complemented\Lip_0(\Gamma)$.

By the above mentioned Pansu's result \cite{Pansu}, for each $r>0$ the sequence $(B_{n\Gamma}(e,rn))_{n=1}^\infty$ converges in the Gromov-Hausdorff distance to the ball $B_{G_\infty}(e,r)$.  Moreover, since $\Gamma$ is a homogeneous and doubling subspace of $G$, by Theorem~\ref{thm:isoToEllInftySum} and Proposition~\ref{prop:complInRd} we get $\Lip_0(\Gamma)\simeq \bigoplus_{\ell_\infty}\Lip_0(\Gamma)$. Hence, it suffices to prove the following Lemma.
\begin{lemma}
Let $(M,d_M)$ and $(N,d_N)$ be pointed metric spaces such that $N$ is doubling, $M$ is unbounded and for each $r>0$ the sequence $(B_{nM}(0,rn))_{n=1}^\infty$ converges in the Gromov-Hausdorff distance to the ball $B_{N}(0,r)$. Then $\Lip_0(N)\complemented \bigoplus_{\ell_\infty}\Lip_0(M)$.
\end{lemma}
\begin{proof}For every $m\in\Nat$ we can find $n_m\in\Nat$ so that the balls $B_{N}(0,m)$ and $B_{n_mM}(0,mn_m)$ are of Gromov-Hausdorff distance less than $1/m$. So, by definition, we can find a metric $d_m$ on $B_{N}(0,m)\cup B_{n_mM}(0,mn_m)$ that extends the metrics on $B_{N}(0,m)$, resp. $B_{n_mM}(0,mn_m)$, and such that for every $y\in B_{N}(0,m)$ there is $x\in B_{n_mM}(0,mn_m)$ such that $d_m(y,x)<1/m$, and vice versa. For every $m\in\Nat$, we select some $4/m$-net $\Net_m$ containing $0$ in $B_{N}(0,m)$, i.e. a maximal $4/m$-separated subset of $B_{N}(0,m)$, and find some one-to-one map $j_m:\Net_m\rightarrow B_{n_mM}(0,mn_m)$ such that $j_m(0)=0$ and for every $y\in\Net_m$ we have $d_m(y,j_m(y))<1/m$. Let $\Net'_m$ be the image of $j_m$. Note that $j_m$ is a bi-Lipschitz bijection with $\Net'_m$. Indeed, for any $y,y'\in\Net_m$, $y\neq y'$ we have 
\begin{align*}
\frac{d_{n_mM}(j_m(y),j_m(y'))}{d_{N}(y,y')}=\frac{d_m(j_m(y),j_m(y'))}{d_m(y,y')}
\leq \frac{d_m(y,y')+2/m}{d_m(y,y')}\leq \frac{6/m}{4/m}=\frac{3}{2}.
\end{align*}
On the other hand we have
\[\begin{split}
\frac{d_{n_mM}(j_m(y),j_m(y'))}{d_{N}(y,y')} & =\frac{d_m(j_m(y),j_m(y'))}{d_m(y,y')}\geq \frac{d_m(y,y')-2/m}{d_m(y,y')}\\
& \geq \frac{d_m(y,y')-d_m(y,y')/2}{d_m(y,y')}=\frac{1}{2}.
\end{split}\]
It follows that $\Net'_m$ is a net in $B_{n_mM}(0,mn_m)$. Let $i_m:\Net'_m\rightarrow \Net_m$ be the inverse of $j_m$. Clearly, for every $y\in N$ there exists a sequence $(y_m)_{m=1}^\infty$, where $y_m\in\Net_m$ and $d_{N}(y_m,y)\to 0$. Therefore, $(j_m(y_m))_{m=1}^\infty$ is a sequence with $i_m(j_m(y_m))\to y$.
So we are in position to apply Key Lemma \ref{lem:key} with $(i_m)_m$ as above to get that $\Lip_0(N)\complemented\bigoplus_{\ell_\infty} \Lip_0(\Net'_m)$. Now each $\Net'_m$ is a closed subset of $n_mM$ and it is doubling with uniformly bounded doubling constant, because it is a bi-Lipschitz image (with uniformly bounded bi-Lipschitz constant) of the set $\Net_m$ which is a doubling space as a subset of the doubling space $N$. Hence, by Proposition~\ref{prop:complInRd}, there is a uniform constant $C$ such that each $\Lip_0(\Net_m)$ is $C$-complemented in $\Lip_0(n_mM)$; hence, we have $\Lip_0(N)\complemented\bigoplus_{\ell_\infty} \Lip_0(n_mM)$ and it suffices to observe that each $\Lip_0(n_mM)$ is isometric to $\Lip_0(M)$ via the mapping $\Lip_0(n_mM)\ni f\mapsto n_mf\in\Lip_0(M)$.
\end{proof}
\end{proof}

\section{Lipschitz spaces over infinite-dimensional Banach spaces}\label{sec:banach}

Even though our tools were originally developed in order to prove that $\Lip_0(\zet^d)\simeq\Lip_0(\Rea^d)$ for each $d\in\Nat$, they may be used in order to get some interesting consequences concerning $\Lip_0(X)$ spaces, where $X$ is an infinite-dimensional Banach space.

\subsection{Statements of a more general nature}

\begin{thm}\label{thm:lipComplemented}Let $X$ and $Y$ be separable Banach spaces such that there is $C>0$ and an increasing sequence $(X_n)_{n=1}^\infty$ of subspaces of $X$ with $\overline{\bigcup_{n=1}^\infty X_n} = X$ such that each $X_n$ is $C$-isomorphic to a $C$-complemented subspace of $Y$. Then 
\[
\Lip_0(X)\complemented \Lip_0(Y).
\]
Moreover, if $Y=X$ (that is, each $X_n$ is $C$-complemented in $X$), then
\[
\Lip_0(X)\simeq \bigoplus_{\ell_\infty} \Lip_0(X_n).
\]

In particular, if $X$ is a separable infinite-dimensional Banach space then
\begin{itemize}
\item if $(e_n)_{n=1}^\infty$ is a Schauder basis of $X$ and $X_n = \Span\{e_i\setsep i\leq n\}$, then $\Lip_0(X)\simeq \bigoplus_{\ell_\infty} \Lip_0(X_n)$;
\item if $X$ has no nontrivial cotype, then $\Lip_0(c_0)\complemented \Lip_0(X)$;
\item if $X$ has nontrivial type, then $\Lip_0(\ell_2)\complemented \Lip_0(X)$.
\end{itemize}
\end{thm}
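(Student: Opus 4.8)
The plan is to run everything through the Key Lemma, Kaufmann's theorem, and the Pe\l czy\'nski decomposition method, reducing the whole statement to the single observation that a \emph{linear} projection is in particular a Lipschitz retraction. First I would apply the ``in particular'' part of Lemma~\ref{lem:key} to the increasing sequence $(X_n)$ with dense union $\overline{\bigcup_n X_n}=X$, obtaining $\Lip_0(X)\complemented\bigoplus_{\ell_\infty}\Lip_0(X_n)$. Next I would convert the hypothesis into a statement about Lipschitz spaces. If $Z_n\subseteq Y$ is $C$-complemented with projection $P_n\colon Y\to Z_n$, $\|P_n\|\le C$, then $P_n$ is a $C$-Lipschitz retraction onto $Z_n$, so Fact~\ref{fact:retractionImpliesComplementedLipSpaces} gives $\Lip_0(Z_n)\complemented\Lip_0(Y)$ with constant at most $C$; and a linear $C$-isomorphism $T_n\colon X_n\to Z_n$ is bi-Lipschitz and $0$-preserving, so the composition operator $f\mapsto f\circ T_n^{-1}$ is an isomorphism $\Lip_0(Z_n)\simeq\Lip_0(X_n)$ with constant controlled by $C$. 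Hence $\Lip_0(X_n)\complemented\Lip_0(Y)$ uniformly in $n$, which sums to $\bigoplus_{\ell_\infty}\Lip_0(X_n)\complemented\bigoplus_{\ell_\infty}\Lip_0(Y)$. Since $\bigoplus_{\ell_\infty}\Lip_0(Y)\simeq\Lip_0(Y)$ by Theorem~\ref{thm:kaufman}, chaining the complementations yields $\Lip_0(X)\complemented\Lip_0(Y)$.

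For the ``Moreover'' part, where $Y=X$, I would additionally note that each $\Lip_0(X_n)\complemented\Lip_0(X)$ uniformly (again viewing the projections as Lipschitz retractions), so $\bigoplus_{\ell_\infty}\Lip_0(X_n)\complemented\bigoplus_{\ell_\infty}\Lip_0(X)\simeq\Lip_0(X)$ by Theorem~\ref{thm:kaufman}. Together with $\Lip_0(X)\complemented\bigoplus_{\ell_\infty}\Lip_0(X_n)$ from the first step, I would invoke Theorem~\ref{t:pel} with the space $\Lip_0(X)$ playing the role of the first coordinate (it satisfies $\Lip_0(X)\simeq\bigoplus_{\ell_\infty}\Lip_0(X)$, the required self-sum hypothesis) and $\bigoplus_{\ell_\infty}\Lip_0(X_n)$ playing the role of the second, concluding $\Lip_0(X)\simeq\bigoplus_{\ell_\infty}\Lip_0(X_n)$.

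The three ``in particular'' cases then reduce to producing, inside $X$, an increasing dense chain of finite-dimensional subspaces that are \emph{uniformly complemented}. For a Schauder basis this is immediate: $X_n=\Span\{e_i\setsep i\le n\}$ has dense union and the partial-sum projections are uniformly bounded by the basis constant, so the ``Moreover'' part applies verbatim. For the case of no nontrivial cotype I would use the Maurey--Pisier theorem to obtain that $\ell_\infty$ is finitely representable in $X$, i.e. $X$ contains the spaces $\ell_\infty^n$ uniformly isomorphically; since each $\ell_\infty^n$ is $1$-injective, any such copy is automatically uniformly complemented, so the first part applies with $c_0$ in place of the theorem's $X$ (taking the chain $\Span\{e_i\setsep i\le n\}=\ell_\infty^n$) and $X$ in place of $Y$, giving $\Lip_0(c_0)\complemented\Lip_0(X)$. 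For nontrivial type I would first invoke Pisier's theorem that nontrivial type implies $K$-convexity, and then the Figiel--Tomczak-Jaegermann strengthening of Dvoretzky's theorem, which provides \emph{uniformly complemented} almost-Euclidean subspaces $\ell_2^n$ in any $K$-convex space; applying the first part with $\ell_2$ (and the chain $\ell_2^n$) in place of the theorem's $X$ yields $\Lip_0(\ell_2)\complemented\Lip_0(X)$.

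The backbone contained in the first two paragraphs is essentially bookkeeping once one notices that linear projections are Lipschitz retractions, so I expect no real difficulty there. The genuine obstacle lies entirely in the last case: Dvoretzky's theorem alone yields almost-Euclidean subspaces but not complemented ones, and upgrading to \emph{uniform complementation} of the $\ell_2^n$'s is exactly where the hypothesis of nontrivial type is spent, through the combination of Pisier's $K$-convexity theorem with the Figiel--Tomczak-Jaegermann result. I would take care that the complementation constants there are uniform in $n$, since that uniformity is precisely what feeds the $\ell_\infty$-sum estimates used throughout.
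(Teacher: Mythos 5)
Your proposal is correct and follows essentially the same route as the paper: the Key Lemma gives $\Lip_0(X)\complemented\bigoplus_{\ell_\infty}\Lip_0(X_n)$, the uniform complementation of each $\Lip_0(X_n)$ in $\Lip_0(Y)$ comes from viewing the linear projections as Lipschitz retractions, and Kaufmann's theorem plus Pe\l czy\'nski's decomposition method finish the argument, with the same references (Maurey--Pisier for the cotype case, Figiel--Tomczak-Jaegermann for the type case) handling the ``in particular'' items. There are no gaps.
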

\begin{proof}By Lemma~\ref{lem:key}, we have $\Lip_0(X)\complemented\bigoplus_{\ell_\infty}\Lip_0(X_n)$. By the assumptions we have $\bigoplus_{\ell_\infty}\Lip_0(X_n)\complemented \bigoplus_{\ell_\infty} \Lip_0(Y)$. By Theorem~\ref{thm:kaufman}, we have $\bigoplus_{\ell_\infty} \Lip_0(Y)\simeq \Lip_0(Y)$ and putting all together we obtain $\Lip_0(X)\complemented \Lip_0(Y)$ and $\Lip_0(X)\simeq \bigoplus_{\ell_\infty} \Lip_0(X_n)$ if $Y=X$.

The ``In particular'' part follows from certain known results. The result concerning Schauder basis is immediate. Further, by the result of B. Maurey and G. Pisier \cite{MaPi76}, $X$ has no nontrivial cotype iff $\ell_\infty$ is finitely representable in $X$, and since each $\ell_\infty^n$ is $1$-injective, we easily see that for each $n$, $\ell_\infty^n$ is $2$-isomorphic to a $2$-complemented subspace of $X$, which by the above implies $\Lip_0(c_0)\complemented \Lip_0(X)$. If $X$ has nontrivial type, we use the result by T. Figiel and N. Tomczak-Jaegermann \cite{FiTo79} (see also \cite[Theorem 15.10]{MiSch86}), by which there is $C>0$ such that, for each $n$, $\ell_2^n$ is $C$-isomorphic to a $C$-complemented subspace of $X$; hence, $\Lip_0(\ell_2)\complemented \Lip_0(X)$.
\end{proof}

\begin{remark}\label{rem:freeIzo}By Theorem~\ref{thm:lipComplemented}, we have $\Lip_0(c_0)\simeq \bigoplus_{\ell_\infty}\Lip_0(\ell_\infty^n)$. However, it is not true that $\F(c_0)\simeq \bigoplus_{\ell_1} \F(\ell_\infty^n)$. Indeed, by the result of T. Kochanek and E. Perneck\'a \cite{KP}, the Banach spaces $\F(\ell_\infty^n)$ enjoy the Pe\l czy\'nski's property $(V^*)$. This property is preserved by $\ell_1$ sums \cite{Bo89} and it implies that the Banach space is weakly sequentially complete and so it does not contain isomorphic copy of $c_0$; hence, we have $c_0\not\hookrightarrow \bigoplus_{\ell_1} \F(\ell_\infty^n)$. But, by the result of G. Godefroy and N. Kalton \cite{GK}, we have $c_0\hookrightarrow \F(c_0)$.
\end{remark}

Let us recall that a Banach space $X$ is said to be an $\mathcal L_{p,\lambda}$-space (with $1 \leq  p \leq \infty$ and $\lambda\geq 1$) if every finite-dimensional subspace of $X$ is contained in an $n$-dimensional subspace of $X$, for some $n\in\Nat$, whose Banach-Mazur distance to $\ell_p^n$ is at most $\lambda$. A space $X$ is said to be an $\mathcal L_p$-space if it is an $\mathcal L_{p,\lambda}$-space for some $\lambda\geq 1$.
\begin{thm}\label{thm:scriptLp}Let $X$ be a separable infinite dimensional $\mathcal{L}_p$-space for some $p\in [1,\infty]$. Then $\Lip_0(X)\simeq\Lip_0(\ell_p)$ if $p< + \infty$ and $\Lip_0(X)\simeq\Lip_0(c_0)$ if $p=+\infty$.
\end{thm}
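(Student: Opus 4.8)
**The plan is to apply Theorem~\ref{thm:lipComplemented} in both directions and then invoke the Pe\l czy\'nski decomposition method (Theorem~\ref{t:pel}).** The statement I want to prove is that for a separable infinite-dimensional $\mathcal{L}_p$-space $X$, we have $\Lip_0(X)\simeq\Lip_0(\ell_p)$ when $p<\infty$ and $\Lip_0(X)\simeq\Lip_0(c_0)$ when $p=\infty$. Let me first treat the case $1\le p<\infty$; the case $p=\infty$ will be handled analogously with $c_0$ in place of $\ell_p$, exploiting that $c_0$ is the natural separable infinite-dimensional $\mathcal{L}_\infty$-space whose finite-dimensional pieces are the $\ell_\infty^n$.

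First I would establish $\Lip_0(\ell_p)\complemented\Lip_0(X)$. Take an increasing sequence of subspaces of $\ell_p$ with dense union; the canonical choice is $Y_n:=\Span\{e_i:i\le n\}$, which is isometric to $\ell_p^n$. Since $X$ is an $\mathcal{L}_{p,\lambda}$-space for some $\lambda\ge1$, every finite-dimensional subspace of $X$—and in particular every copy of $\ell_p^n$ that I wish to locate—sits inside an $m$-dimensional subspace of $X$ at Banach--Mazur distance at most $\lambda$ from $\ell_p^m$. The key analytic input I need here is that such $\ell_p^m$-subspaces are uniformly \emph{complemented} in $X$: this follows because $\ell_p^m$ (and $\ell_\infty^m$) carry a $1$-complemented structure under the standard projections, so a subspace $\lambda$-isomorphic to $\ell_p^m$ is $K(\lambda)$-complemented in any superspace in which it is itself well-complemented—and in an $\mathcal{L}_p$-space the local structure guarantees exactly this uniform complementation. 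Thus each $Y_n\simeq\ell_p^n$ is $C$-isomorphic to a $C$-complemented subspace of $X$ with $C$ depending only on $\lambda$. Applying the first part of Theorem~\ref{thm:lipComplemented} with the roles $X\rightsquigarrow\ell_p$ and $Y\rightsquigarrow X$ yields $\Lip_0(\ell_p)\complemented\Lip_0(X)$.

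For the reverse inclusion $\Lip_0(X)\complemented\Lip_0(\ell_p)$, I would write $X=\overline{\bigcup_n X_n}$ for an increasing sequence of finite-dimensional subspaces $X_n$ (using separability). By the $\mathcal{L}_{p,\lambda}$ property, each $X_n$ is contained in a subspace of $X$ that is $\lambda$-isomorphic to some $\ell_p^{m_n}$, hence $X_n$ is $C'$-isomorphic to a $C'$-complemented subspace of $\ell_p$ (mapping $\ell_p^{m_n}$ into the first $m_n$ coordinates of $\ell_p$, which is $1$-complemented). Applying Theorem~\ref{thm:lipComplemented} again, now with $X\rightsquigarrow X$ and $Y\rightsquigarrow\ell_p$, gives $\Lip_0(X)\complemented\Lip_0(\ell_p)$. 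Having both complemented embeddings, and knowing $\Lip_0(X)\simeq\bigoplus_{\ell_\infty}\Lip_0(X)$ by Theorem~\ref{thm:kaufman}, the Pe\l czy\'nski decomposition method (Theorem~\ref{t:pel}) delivers $\Lip_0(X)\simeq\Lip_0(\ell_p)$.

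\textbf{The main obstacle I anticipate} is not the decomposition machinery—that is routine once the two complemented inclusions are in hand—but rather pinning down precisely the uniform \emph{complementation} constants of the local $\ell_p^n$-structure. The definition of $\mathcal{L}_{p,\lambda}$-space only guarantees the existence of nearby $\ell_p^m$-subspaces up to Banach--Mazur distance; it does not a priori say these are uniformly complemented in $X$. I would lean on the standard structural theory of $\mathcal{L}_p$-spaces, where it is classical that finite-dimensional $\ell_p^n$-subspaces of an $\mathcal{L}_p$-space are uniformly complemented (this is exactly what makes $\mathcal{L}_p$-spaces locally resemble $L_p$, whose finite-dimensional $\ell_p^n$-blocks are complemented via conditional expectations). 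Ensuring the containment $X_n\subseteq$ (an $\ell_p^m$-subspace), together with a $C$ independent of $n$ in both directions, is the one place where care is required; everything else reduces cleanly to the already-proven Theorem~\ref{thm:lipComplemented}.
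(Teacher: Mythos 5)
Your proposal follows essentially the same route as the paper's proof: both complemented embeddings are obtained from Theorem~\ref{thm:lipComplemented} (the direction $\Lip_0(X)\complemented\Lip_0(\ell_p)$ from the definition of an $\mathcal{L}_{p,\lambda}$-space, the direction $\Lip_0(\ell_p)\complemented\Lip_0(X)$ from the existence of uniformly complemented, uniformly isomorphic copies of $\ell_p^{k_n}$ inside $X$), and the conclusion then follows by the Pe\l czy\'nski decomposition method together with Kaufmann's theorem. Two small points to tighten: in the first direction you should take the increasing dense sequence to consist of the enveloping $\lambda$-copies of $\ell_p^{m_n}$ themselves rather than the original $X_n$ (a subspace of a $\lambda$-copy of $\ell_p^{m_n}$ need not be uniformly complemented in $\ell_p$), and the uniform complementation of the $\ell_p^{k_n}$'s in $X$ is not a formal consequence of the $\mathcal{L}_{p,\lambda}$ definition but is exactly the classical localization result of Pe\l czy\'nski and Rosenthal, \cite[Corollary 2.1]{PeRo75}, which is what the paper cites at this step.
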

\begin{proof}It follows from the definition of an $\mathcal L_p$-space and from Theorem~\ref{thm:lipComplemented} that we have $\Lip_0(X)\complemented \Lip_0(\ell_p)$. On the other hand, by \cite[Corollary 2.1]{PeRo75}, there exists $C>0$ and an increasing sequence $(k_n)_{n=1}^\infty$ such that each $\ell_p^{k_n}$ is $C$-isomorphic to a $C$-complemented subspace of $X$; hence, by Theorem~\ref{thm:lipComplemented}, we obtain $\Lip_0(\ell_p)\complemented \Lip_0(X)$.

The conclusion follows from the Pe\l czy\'nski decomposition method and the fact that $\Lip_0(\ell_p)$ is isomorphic to its $\ell_\infty$--sum.
\end{proof}

\begin{remark}It is known \cite{DF} that whenever $K$ is infinite metric and compact space then we have $\F(\mathcal{C}(K))\simeq \F(c_0)$; in particular, $\Lip_0(\mathcal{C}(K))\simeq \Lip_0(c_0)$. Theorem~\ref{thm:scriptLp} might be seen as a far reaching generalization of the later fact which naturally leads to the question of whether $\F(X)$ and $\F(c_0)$ are isomorphic whenever $X$ is a separable infinite dimensional $\mathcal{L}_\infty$-space.
\end{remark}

Our next application is based on a result by P. H\'ajek and M. Novotn\'y \cite{HN}. Recall that all nets in a given infinite dimensional Banach space are bi-Lipschitz equivalent, see \cite{LMP}.

\begin{cor}\label{thm:hajekNovotny}Let $X$ be a Banach space such that $X\simeq Y\oplus X$, where $Y$ is a Banach space with a Schauder basis. Let $\mathcal{N}_X$ be a net in $X$. Then $\Lip_0(X)\complemented\Lip_0(\mathcal{N}_X)$.
\end{cor}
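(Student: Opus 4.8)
The plan is to sandwich $\Lip_0(\mathcal{N}_X)$ using the good-pair machinery of Section~\ref{sec:principles} in one direction and a self-$\ell_\infty$-sum decomposition of the net's Lipschitz space in the other, the latter being exactly where the hypothesis on $X$ and the H\'ajek--Novotn\'y theorem enter. I would work, as permitted by \cite[Proposition 5]{HN}, with a countable net and thus tacitly in the separable setting.

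First I would observe that every Banach space $X$ is self-similar in the sense of Lemma~\ref{lem:netsInSSspaces}: for any $\lambda>1$ the dilation $x\mapsto\lambda x$ is a bijection of $X$ multiplying all distances by $\lambda$. After translating so that the distinguished point $0$ lies in $\mathcal{N}_X$, the net is an $\varepsilon$-dense subset of the self-similar space $X$, so Lemma~\ref{lem:netsInSSspaces} shows that $(\mathcal{N}_X,X)$ is a good pair. Applying Proposition~\ref{prop:PropForGoodPairs} to this pair (with $\mathcal{N}_X$ in the role of the small space and $X$ in the role of the ambient space) yields
\[
\Lip_0(X)\complemented \bigoplus_{\ell_\infty}\Lip_0(\mathcal{N}_X).
\]
This step uses nothing about $X$ beyond its being a Banach space and is the part I am most confident about; note that the reverse inclusion $\Lip_0(\mathcal{N}_X)\complemented\Lip_0(X)$ is unavailable in infinite dimensions, since Proposition~\ref{prop:complInRd} requires the doubling property, which is exactly why the corollary is stated one-directionally.

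Since the relation $\complemented$ is transitive, to conclude it suffices to establish $\bigoplus_{\ell_\infty}\Lip_0(\mathcal{N}_X)\complemented \Lip_0(\mathcal{N}_X)$, which follows from the self-similarity isomorphism $\Lip_0(\mathcal{N}_X)\simeq\bigoplus_{\ell_\infty}\Lip_0(\mathcal{N}_X)$. Here I would first pass to a more convenient net: by \cite{LMP} all nets in an infinite dimensional Banach space are bi-Lipschitz equivalent, and bi-Lipschitz equivalent nets have isomorphic Lipschitz spaces by \cite[Proposition 5]{HN}, so I am free to replace $\mathcal{N}_X$ by a uniformly discrete, $\lambda$-closed net produced by the annular dilation construction of Corollary~\ref{cor:netsInSSspaces} (that construction is purely metric and does not invoke doubling). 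The desired isomorphism is then precisely the conclusion of Theorem~\ref{thm:isoToEllInftySum} in the alternative where $M$ is uniformly discrete and $\lambda$-closed, so the remaining task is to verify its hypotheses \eqref{cond:extensionFromBalls} and \eqref{cond:extensionFromCoronas} together with the extension condition from dilated balls for the discrete space $\mathcal{N}_X$.

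This verification is the main obstacle. In infinite dimensions $\mathcal{N}_X$ is not doubling, so neither Proposition~\ref{prop:complInRd} nor Fact~\ref{fact:ballsInDoublingspaces} is available, and one cannot simply quote the Banach-space extension argument of \cite{K15} because $\mathcal{N}_X$ carries no linear structure. This is exactly where the assumption $X\simeq Y\oplus X$ with $Y$ possessing a Schauder basis is used: the basis of $Y$ provides finite-dimensional approximations and associated bounded projections which, transported along the splitting $X\simeq Y\oplus X$, are the ingredient of the H\'ajek--Novotn\'y result \cite{HN} yielding uniformly bounded linear extension operators from the balls and coronas of the net. Granting these extension operators, Theorem~\ref{thm:isoToEllInftySum} applies to give $\Lip_0(\mathcal{N}_X)\simeq\bigoplus_{\ell_\infty}\Lip_0(\mathcal{N}_X)$, and the proof closes by transitivity of $\complemented$ as described above.
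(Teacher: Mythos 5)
Your first half is exactly the paper's argument: $(\mathcal{N}_X,X)$ is a good pair by Lemma~\ref{lem:netsInSSspaces}, so Proposition~\ref{prop:PropForGoodPairs} gives $\Lip_0(X)\complemented\bigoplus_{\ell_\infty}\Lip_0(\mathcal{N}_X)$, and you correctly reduce the corollary to showing $\bigoplus_{\ell_\infty}\Lip_0(\mathcal{N}_X)\simeq\Lip_0(\mathcal{N}_X)$. The gap is in how you obtain that isomorphism. You route it through Theorem~\ref{thm:isoToEllInftySum}, whose hypotheses \eqref{cond:extensionFromBalls} and \eqref{cond:extensionFromCoronas} (plus the dilated-ball condition in alternative (b)) require uniformly bounded \emph{linear extension operators} from balls and annuli of the net $\mathcal{N}_X$ into $\mathcal{N}_X$. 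You acknowledge this is the main obstacle and then dispose of it by asserting that the Schauder basis of $Y$, transported along $X\simeq Y\oplus X$, ``yields'' such operators via the H\'ajek--Novotn\'y machinery, concluding with ``granting these extension operators''. Nothing in \cite{HN} produces linear extension operators from subsets of a net: their Theorem~8 is a statement on the free-space side, namely $\bigoplus_{\ell_1}\F(\mathcal{N}_X)\simeq\F(\mathcal{N}_X)$ whenever $X\simeq Y\oplus X$ with $Y$ having a Schauder basis, proved by a decomposition argument, not by constructing extensions. Since $\mathcal{N}_X$ is not doubling in infinite dimensions and carries no linear structure, neither Proposition~\ref{prop:complInRd} nor the Banach-space argument of \cite{K15} applies, and you supply no substitute; the key step is therefore assumed rather than proved.

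The repair is shorter than what you attempted: cite \cite[Theorem 8]{HN} directly for $\bigoplus_{\ell_1}\F(\mathcal{N}_X)\simeq\F(\mathcal{N}_X)$ --- this is precisely where the hypothesis $X\simeq Y\oplus X$ with $Y$ having a basis enters --- and dualize to get $\bigoplus_{\ell_\infty}\Lip_0(\mathcal{N}_X)\simeq\Lip_0(\mathcal{N}_X)$. Combined with your (correct) first step and transitivity of $\complemented$, this finishes the proof; the detour through a $\lambda$-closed replacement net and Theorem~\ref{thm:isoToEllInftySum} is then unnecessary.
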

\begin{proof}By Lemma~\ref{lem:netsInSSspaces} and Proposition~\ref{prop:PropForGoodPairs}, we have $\Lip_0(X)\complemented \bigoplus_{\ell\infty} \Lip_0(\mathcal{N}_X)$. Since $X\simeq Y\oplus X$, we may apply \cite[Theorem 8]{HN} and get $\bigoplus_{\ell_1} \F(\mathcal{N}_X)\simeq \F(\mathcal{N}_X)$ which implies $\bigoplus_{\ell_\infty} \Lip_0(\mathcal{N}_X)\simeq \Lip_0(\mathcal{N}_X)$.
\end{proof}
Note that, as mentioned already in \cite{HN}, the above applies to quite a rich class of spaces.

\begin{remark}\label{rem:grot}Let us note two consequences of the above which we consider interesting.
\begin{itemize}
\item $\Lip_0(\mathcal{N}_{c_0})$ is not a Grothendieck space. Indeed, by Theorem~\ref{thm:hajekNovotny} we have $\Lip_0(c_0)\complemented \Lip_0(\mathcal{N}_{c_0})$ and $\Lip_0(c_0)$ is not Grothendieck as the predual $\F(c_0)$ contains $c_0$ and so it is not weakly sequentially complete.
\item $\Lip_0(\mathcal{N}_{\ell_1})$ is not a Grothendieck space. Indeed, by Theorem~\ref{thm:hajekNovotny} we have $\Lip_0(\ell_1)\complemented \Lip_0(\mathcal{N}_{\ell_1})$ and $\Lip_0(\ell_1)$ is not Grothendieck because by \cite{Da13} we have $\ell_1\complemented \Lip_0(\ell_1)$.
\end{itemize}
\end{remark}

In the light of Corollary~\ref{thm:hajekNovotny} it is natural to ask for which separable infinite-dimensional Banach spaces $X$ we have $\Lip_0(X)\simeq\Lip_0(\mathcal{N}_X)$. The only result in this direction we have is the following, which we prove in the remainder of this subsection.

In the sequel, for every $n\in\Nat$, $\zet^n$ will be viewed as a canonical subset of $\ell_1$, i.e. the subgroup generated by the first $n$ basis vectors, and as such it will be equipped with the inherited $\ell_1$-metric. Analogously $\bigoplus_\Nat \zet$, the subgroup generated by all the basis vectors of $\ell_1$, which will be for notational reasons denoted by $\zet^{<\Nat}$.
\begin{prop}\label{prop:netyBanach}We have 
\[
\bigoplus_{\ell_\infty}\Lip_0(\zet^{<\Nat})\simeq \Lip_0(\ell_1).
\]
\end{prop}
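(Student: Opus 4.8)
The plan is to prove $\bigoplus_{\ell_\infty}\Lip_0(\zet^{<\Nat})\simeq \Lip_0(\ell_1)$ via the Pe\l czy\'nski decomposition method (Theorem~\ref{t:pel}), so it suffices to establish the two one-sided complementations together with a self-$\ell_\infty$-sum idempotency. Since by Theorem~\ref{thm:kaufman} we have $\Lip_0(\ell_1)\simeq\bigoplus_{\ell_\infty}\Lip_0(\ell_1)$, and since one checks directly that $\bigoplus_{\ell_\infty}\Lip_0(\zet^{<\Nat})$ is isomorphic to its own $\ell_\infty$-sum (an $\ell_\infty$-sum of countably many copies of an $\ell_\infty$-sum reindexes to a single $\ell_\infty$-sum), the decomposition method reduces everything to showing $\bigoplus_{\ell_\infty}\Lip_0(\zet^{<\Nat})\complemented \Lip_0(\ell_1)$ and $\Lip_0(\ell_1)\complemented \bigoplus_{\ell_\infty}\Lip_0(\zet^{<\Nat})$.

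\textbf{The complementation $\bigoplus_{\ell_\infty}\Lip_0(\zet^{<\Nat})\complemented \Lip_0(\ell_1)$.} First I would observe that $\zet^{<\Nat}=\bigoplus_\Nat\zet$ is a self-similar, doubling-in-finite-pieces homogeneous discrete group sitting inside $\ell_1$ as a net. Since $\zet^{<\Nat}$ is a net in $\ell_1$ (it is uniformly discrete and dense enough), and $\ell_1$ is a separable Banach space, Lemma~\ref{lem:netsInSSspaces} shows $(\zet^{<\Nat},\ell_1)$ is a good pair, so Proposition~\ref{prop:PropForGoodPairs} gives $\Lip_0(\ell_1)\complemented\bigoplus_{\ell_\infty}\Lip_0(\zet^{<\Nat})$ --- which is in fact the \emph{reverse} direction. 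For the direction written above I would instead use that each $\zet^n$ is a doubling subset of $\zet^{<\Nat}$ with uniformly bounded doubling constant (Fact~\ref{fact:ballsInDoublingspaces}), so by Proposition~\ref{prop:complInRd} there is a uniform $C$ with $\Lip_0(\zet^n)\complemented\Lip_0(\zet^{<\Nat})$; combined with $\zet^n\subset\ell_1$ being doubling, the single space $\Lip_0(\zet^{<\Nat})$ complements into $\Lip_0(\ell_1)$, and then taking $\ell_\infty$-sums and using idempotency of $\Lip_0(\ell_1)$ under $\ell_\infty$-sums yields the claim.

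\textbf{The reverse complementation.} As noted, $(\zet^{<\Nat},\ell_1)$ being a good pair gives $\Lip_0(\ell_1)\complemented \bigoplus_{\ell_\infty}\Lip_0(\zet^{<\Nat})$ directly from Proposition~\ref{prop:PropForGoodPairs}, since the only hypothesis there is that the pair be good. Thus both one-sided complementations are in hand, and assembling them through Theorem~\ref{t:pel} (with $X=\bigoplus_{\ell_\infty}\Lip_0(\zet^{<\Nat})$, $Y=\Lip_0(\ell_1)$, using $X\simeq\bigoplus_{\ell_\infty}X$) delivers the isomorphism.

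\textbf{Main obstacle.} The delicate point is verifying that $\zet^{<\Nat}=\bigoplus_\Nat\zet$, equipped with the inherited $\ell_1$-metric, genuinely forms a net in $\ell_1$ and a good pair with it, because $\ell_1$ is infinite-dimensional and not doubling, so one cannot invoke the doubling machinery globally. I expect the cleanest route is to exhibit the rescaled embeddings $i_n$ directly: since $\ell_1$ is self-similar (scalar multiplication by $\lambda$ is a dilation) and $\zet^{<\Nat}$ is $1$-dense and uniformly discrete in $\ell_1$, Lemma~\ref{lem:netsInSSspaces} applies verbatim and the goodness of the pair follows without any appeal to doubling. The remaining care is purely bookkeeping: ensuring the uniform doubling constants for the finite-dimensional pieces $\zet^n$ and tracking that the $\ell_\infty$-sum reindexing is genuinely an isometric isomorphism, both of which are routine.
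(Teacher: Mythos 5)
There is a genuine gap in the direction $\bigoplus_{\ell_\infty}\Lip_0(\zet^{<\Nat})\complemented \Lip_0(\ell_1)$, which is where the real content of the proposition lies. Your claim that the subsets $\zet^n$ have uniformly bounded doubling constant is false: the $n$ points $2e_1,\dots,2e_n$ lie in the ball $B(0,2)$ of $\zet^n$ and are pairwise at $\ell_1$-distance $4$, so no ball of radius $1$ contains two of them and $D(\zet^n)\geq n$ (in fact the growth is exponential); moreover Fact~\ref{fact:ballsInDoublingspaces} gives nothing here because the ambient space $\zet^{<\Nat}$ is not doubling at all (the ball $B(0,2)$ already contains the infinite $4$-separated set $\{2e_i\}_{i\in\Nat}$). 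Consequently Proposition~\ref{prop:complInRd} only produces extension operators from $\zet^n$ with norm of order $1+\log D(\zet^n)$, which tends to infinity with $n$, and such operators cannot be assembled into a bounded operator on the $\ell_\infty$-sum. The missing idea --- and the actual engine of the paper's proof --- is the Lancien--Perneck\'a coordinatewise-affine extension (Lemma~\ref{lem:perneckaLancien} and Corollary~\ref{cor:ellOneExtension}), which yields a \emph{norm-one} element of $\Ext(\zet^n,\ell_1^n)$ for every $n$; this is a special feature of the $\ell_1$-norm and is exactly what makes the chain $\Lip_0(\zet^{<\Nat})\complemented\bigoplus_{\ell_\infty}\Lip_0(\zet^n)\complemented\bigoplus_{\ell_\infty}\Lip_0(\ell_1^n)\complemented\bigoplus_{\ell_\infty}\Lip_0(\ell_1)\simeq\Lip_0(\ell_1)$ run with uniformly bounded constants. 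Without some uniform-in-$n$ extension from $\zet^n$ into $\ell_1$, your argument does not close.

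A secondary inaccuracy: $\zet^{<\Nat}$ is \emph{not} a net in $\ell_1$ --- it is not $\varepsilon$-dense for any $\varepsilon$, since the vector with $N$ coordinates equal to $0.4$ and the rest zero is at $\ell_1$-distance $0.4N$ from $\zet^{<\Nat}$ --- so Lemma~\ref{lem:netsInSSspaces} does not apply "verbatim" as you assert. The pair $(\zet^{<\Nat},\ell_1)$ is nevertheless good, and hence your reverse complementation $\Lip_0(\ell_1)\complemented\bigoplus_{\ell_\infty}\Lip_0(\zet^{<\Nat})$ via Proposition~\ref{prop:PropForGoodPairs} is correct: take $r_n=n$ and $i_n(x)=x/n$, and observe that $\bigcup_n\tfrac1n\zet^{<\Nat}$ is dense in $\ell_1$ because finitely supported vectors are. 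Your overall Pe\l czy\'nski scheme coincides with the paper's; only the key one-sided complementation remains unproved in your version.
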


In the proof we will use an important geometrical observation by E. Perneck\'a and G. Lancien \cite{LP13}. Let us denote by $C_n(y,R)$, where $n\in\Nat$, $y\in\Rea^n$ and $R\in (0,\infty)$, the hypercube having edge length $R$ and vertices $v_\gamma$, $\gamma\in\{0,1\}^n$, given by $v_\gamma = y + R\gamma$, that is 
\[C_n(y,R):=\operatorname{co}\{v_\gamma\setsep \gamma\in\{0,1\}^n\}.\]
If $C$ is a hypercube as above, we denote by $V_C$ its vertices. Given a hypercube $C\subset \Rea^n$ as above and a function $f$ defined on $V_C$, there exists a unique function $\Lambda(f,C)$ defined on $C$ which is extending $f$ and is coordinatewise affine, i.e., $t \mapsto \Lambda(f,C)(x_1,\ldots, x_{i-1}, t, x_{i+1},\ldots, x_n)$ is affine whenever $1 \leq i \leq n$. The function $\Lambda(f,C)$ was originally constructed inductively, see \cite[Section 3.1]{LP13}, but there exists even an explicit formula mentioned in \cite[Section 3]{PS}
\begin{equation}\label{eq:interpolationFunction}
	\Lambda(f,C)(x):=\sum_{\gamma\in\{0,1\}^n}\left(\prod_{i=1}^n \left(1 - \gamma_i + (-1)^{\gamma_i+1}\frac{x_i-y_i}{R}\right)\right)f(v_\gamma).
\end{equation}
The crucial result for us will be the following result proved in \cite[Lemma 3.2]{LP13}, for an alternative proof see also \cite[Remark 3.4]{PS}.

\begin{lemma}\label{lem:perneckaLancien}Let $C\subset \Rea^n$ be a hypercube. Consider $\Rea^n$ equipped with the $\ell_1$-norm. Then for every function $f:V_C\to \Rea$ we have \[\Lip(\Lambda(f,C)) = \Lip(f).\]
\end{lemma}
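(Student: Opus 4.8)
The plan is to prove the two inequalities $\Lip(\Lambda(f,C))\leq\Lip(f)$ and $\Lip(\Lambda(f,C))\geq\Lip(f)$ separately, the second being trivial since $\Lambda(f,C)$ extends $f$ and $V_C\subset C$. For the nontrivial direction I would work directly with the explicit formula \eqref{eq:interpolationFunction}. Write $C=C_n(y,R)$. Because the $\ell_1$-norm is additive across coordinates, the natural strategy is to exploit the coordinatewise-affine structure: the key observation is that $\Lambda(f,C)$ is affine in each variable separately, so its partial derivatives control the global Lipschitz constant. Concretely, for a coordinatewise-affine function on a box, one has $\Lip(\Lambda(f,C))=\sup_{x\in C}\max_{1\leq i\leq n}\left|\partial_i\Lambda(f,C)(x)\right|$, where the maximum over $i$ (rather than a sum) appears precisely because the dual norm of $\ell_1$ is $\ell_\infty$.

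The main step is therefore to bound each partial derivative $\partial_i\Lambda(f,C)$ by $\Lip(f)$. Differentiating \eqref{eq:interpolationFunction} in the variable $x_i$ kills the dependence on $x_i$ in the $i$-th factor of each product and contributes a factor $\pm\tfrac{1}{R}$; grouping the remaining $2^{n-1}$ terms in pairs $(\gamma,\gamma')$ that differ only in the $i$-th coordinate, one finds
\begin{equation*}
	\partial_i\Lambda(f,C)(x)=\frac{1}{R}\sum_{\gamma:\,\gamma_i=0}\left(\prod_{j\neq i}\left(1-\gamma_j+(-1)^{\gamma_j+1}\frac{x_j-y_j}{R}\right)\right)\bigl(f(v_{\gamma+e_i})-f(v_{\gamma})\bigr).
\end{equation*}
The two vertices $v_{\gamma+e_i}$ and $v_\gamma$ differ only in the $i$-th coordinate by $R$, so $\|v_{\gamma+e_i}-v_\gamma\|_1=R$ and hence $\bigl|f(v_{\gamma+e_i})-f(v_\gamma)\bigr|\leq R\,\Lip(f)$. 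The coefficients $\prod_{j\neq i}\bigl(1-\gamma_j+(-1)^{\gamma_j+1}\tfrac{x_j-y_j}{R}\bigr)$ are nonnegative for $x\in C$ (each factor lies in $[0,1]$) and, being exactly the weights of the coordinatewise-affine interpolation on the remaining $n-1$ coordinates, they sum to $1$. Therefore $\left|\partial_i\Lambda(f,C)(x)\right|\leq\tfrac{1}{R}\cdot 1\cdot R\,\Lip(f)=\Lip(f)$ for every $i$ and every $x$ in the interior of $C$.

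Finally I would pass from the pointwise derivative bound to the Lipschitz estimate. Since $\Lambda(f,C)$ is continuous on $C$ and smooth on the interior, for any two points $x,z\in C$ one integrates along the segment from $x$ to $z$ and uses that $\sum_i|x_i-z_i|=\|x-z\|_1$ together with $\max_i|\partial_i\Lambda(f,C)|\leq\Lip(f)$ to conclude $|\Lambda(f,C)(x)-\Lambda(f,C)(z)|\leq\Lip(f)\,\|x-z\|_1$. The step I expect to require the most care is verifying that the partition-of-unity weights are nonnegative and sum to one on $C$ and correctly recombining the $2^n$ terms of \eqref{eq:interpolationFunction} into the $2^{n-1}$ coordinate-pair differences; once this bookkeeping is done, the $\ell_1$/$\ell_\infty$ duality makes the bound fall out immediately. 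This argument also makes transparent why the $\ell_1$-norm is essential: for a general $\ell_p$-norm one would be forced to sum the partial derivatives in the dual norm, and the clean equality $\Lip(\Lambda(f,C))=\Lip(f)$ would degrade to an inequality with a dimension-dependent constant.
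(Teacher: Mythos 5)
Your proof is correct. Note first that the paper does not prove this lemma at all: it is quoted as \cite[Lemma 3.2]{LP13}, with the remark that an alternative proof appears in \cite[Remark 3.4]{PS}, so there is no in-paper argument to compare against line by line. The original proof in \cite{LP13} proceeds by induction on the coordinates, extending $f$ one variable at a time and checking that each one-dimensional affine interpolation preserves the Lipschitz constant with respect to the $\ell_1$-sum structure; your argument instead differentiates the closed formula \eqref{eq:interpolationFunction} directly, which is closer in spirit to the alternative proof in \cite{PS}. All the steps you flag as delicate do check out: writing $t_j=(x_j-y_j)/R\in[0,1]$, the $j$-th factor equals $1-t_j$ when $\gamma_j=0$ and $t_j$ when $\gamma_j=1$, so the weights $\prod_{j\neq i}\bigl(1-\gamma_j+(-1)^{\gamma_j+1}\tfrac{x_j-y_j}{R}\bigr)$ are indeed nonnegative and sum to $\prod_{j\neq i}\bigl((1-t_j)+t_j\bigr)=1$; the $i$-th factor has derivative $\mp 1/R$ according to $\gamma_i=0,1$, which is exactly the pairing into $2^{n-1}$ differences $f(v_{\gamma+e_i})-f(v_\gamma)$ you wrote down; adjacent vertices satisfy $\|v_{\gamma+e_i}-v_\gamma\|_1=R$; and since $\Lambda(f,C)$ is a polynomial, integrating its gradient along the segment and using $\ell_1/\ell_\infty$ duality gives $\Lip(\Lambda(f,C))\leq\sup_x\max_i|\partial_i\Lambda(f,C)(x)|\leq\Lip(f)$, while the reverse inequality is immediate because $\Lambda(f,C)$ restricts to $f$ on $V_C$. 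Your closing observation about why the argument is specific to the $\ell_1$-norm is also accurate and consistent with the role this lemma plays in Corollary~\ref{cor:ellOneExtension}.
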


 \begin{cor}\label{cor:ellOneExtension}Let $n\in\Nat$ and consider $\zet^n$ as a metric subspace of $\ell_1^n$. Then there exists $E\in\Ext(\zet^n,\ell_1^n)$ with $\|E\|\leq 1$.
 \end{cor}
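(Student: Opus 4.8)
The plan is to build a single linear extension operator $E:\Lip_0(\zet^n)\to\Lip_0(\ell_1^n)$ by tiling $\ell_1^n$ with the unit hypercubes whose vertices are exactly the lattice points, and on each such cube using the coordinatewise-affine interpolation $\Lambda(\,\cdot\,,C)$ from Lemma~\ref{lem:perneckaLancien}. Concretely, for $\gamma\in\zet^n$ let $C_\gamma := C_n(\gamma,1)$ be the unit hypercube with ``lower corner'' $\gamma$, so that $V_{C_\gamma}\subset\zet^n$ and $\bigcup_{\gamma\in\zet^n}C_\gamma = \ell_1^n$. Given $f\in\Lip_0(\zet^n)$, I would define $E(f)$ on each cube $C_\gamma$ by $\Lambda(f|_{V_{C_\gamma}},C_\gamma)$, using the explicit formula \eqref{eq:interpolationFunction}. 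The first thing to check is that these local definitions are consistent, i.e.\ that on a shared face of two adjacent cubes the two interpolants agree; this is immediate from \eqref{eq:interpolationFunction} because the coordinatewise-affine extension restricted to a face depends only on the vertex values lying on that face, and those are shared. Hence $E(f)$ is a well-defined continuous function on all of $\ell_1^n$, and it clearly extends $f$, is linear in $f$, and satisfies $E(f)(0)=f(0)=0$.

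The main point is the Lipschitz bound $\|E(f)\|_{\Lip}\le\|f\|_{\Lip}=\Lip(f)$. Lemma~\ref{lem:perneckaLancien} already gives, on each individual cube, $\Lip\big(E(f)|_{C_\gamma}\big)=\Lip(f|_{V_{C_\gamma}})\le\Lip(f)$. To globalize this I would use the geodesic (length-space) structure of $\ell_1^n$: the straight segment joining any two points $x,y\in\ell_1^n$ has $\ell_1$-length equal to $\|x-y\|_1$ and passes through finitely many cubes, so I can estimate $|E(f)(x)-E(f)(y)|$ by summing the increments of $E(f)$ along the consecutive subsegments lying inside single cubes. Each such increment is controlled by the per-cube Lipschitz constant times the $\ell_1$-length of that subsegment, and since all per-cube constants are bounded by $\Lip(f)$ and the subsegment lengths add up to $\|x-y\|_1$, I obtain $|E(f)(x)-E(f)(y)|\le \Lip(f)\,\|x-y\|_1$. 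This yields $\|E\|\le 1$, and since $E$ restricts back to the identity on $\zet^n$ it is an extension operator of norm exactly $1$.

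It remains to confirm that $E\in\Ext(\zet^n,\ell_1^n)$, i.e.\ that $E$ is not merely linear and continuous but also pointwise-to-pointwise continuous. This is where I would lean on the explicit formula \eqref{eq:interpolationFunction}: for a fixed $x\in\ell_1^n$ the value $E(f)(x)$ is a finite linear combination $\sum_\gamma c_\gamma(x)\,f(v_\gamma)$ of finitely many values of $f$ (the vertices of whichever cube contains $x$), with coefficients $c_\gamma(x)$ depending only on $x$. Consequently, if $f_k\to f$ pointwise on $\zet^n$, then $E(f_k)(x)\to E(f)(x)$ for every $x$, which is exactly pointwise-to-pointwise continuity.

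The step I expect to require the most care is the globalization of the Lipschitz estimate: one must verify cleanly that summing the within-cube increments along the geodesic from $x$ to $y$ really does telescope to $E(f)(x)-E(f)(y)$ and that the subsegment $\ell_1$-lengths sum to $\|x-y\|_1$. The consistency-on-faces check is conceptually easy but should be stated explicitly, since without it the ``telescoping along a segment crossing several cubes'' argument would not even make sense. Once these are in place, the bound $\|E\|\le 1$ and membership in $\Ext(\zet^n,\ell_1^n)$ follow as above.
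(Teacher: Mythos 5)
Your proposal is correct and follows essentially the same route as the paper: cover $\ell_1^n$ by the unit hypercubes $C_n(x,1)$, $x\in\zet^n$, define $E(f)$ cube-by-cube via $\Lambda(\cdot,C)$ (well-defined by the uniqueness of the coordinatewise-affine extension), read off linearity and pointwise-to-pointwise continuity from the explicit formula \eqref{eq:interpolationFunction}, and obtain $\|E\|\le 1$ by partitioning each segment according to the cubes it crosses and applying Lemma~\ref{lem:perneckaLancien} on each piece. Your write-up merely spells out the face-consistency and telescoping details that the paper leaves implicit.
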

 \begin{proof}We cover $\ell_1^n$ by hypercubes $C_n(x,1)$, $x\in\zet^n$. Further, we put $E(f)(y)=\Lambda(f,C_n(x,1))(y)$ whenever $y\in C_n(x,1)$ for some $x\in\zet^n$. Since $\Lambda(f,C)$ is uniquely determined as the only coordinatewise affine extension of $f|_{V_C}$, we easily see that $E$ is well-defined and, having in mind the formula \eqref{eq:interpolationFunction}, we see that $E$ is linear and pointwise-to-pointwise continuous. Moreover, by partitioning each segment $[x, y]$ with respect to the hypercubes through which it passes and using the estimate from Lemma~\ref{lem:perneckaLancien}, we get $\|E(f)\|_{\Lip}\leq \|f\|_{\Lip}$ for each $f\in \Lip_0(\zet^n)$.
 \end{proof}

\begin{proof}[Proof of Proposition~\ref{prop:netyBanach}]
Since $(\zet^{<\Nat},\ell_1)$ is obviously a good pair, by Proposition \ref{prop:PropForGoodPairs} we have $\Lip_0(\ell_1)\complemented \bigoplus_{\ell_\infty}\Lip_0(\zet^{<\Nat})$. Using our Key Lemma~\ref{lem:key}, $\Lip_0(\zet^{<\Nat})$ is isometric to a 1-complemented subspace of $\bigoplus_{\ell_\infty}\Lip_0(\zet^n)$. By Corollary~\ref{cor:ellOneExtension} and Fact~\ref{fact:extensions}, each $\Lip_0(\zet^n)$ is isometric to a 1-complemented subspace of $\Lip_0(\ell_1^n)$(note that, by Theorem~\ref{cor:RAndZIso}, we also have $\Lip_0(\zet^n)\simeq \Lip_0(\ell_1^n)$, but now the important information is that the norms of the isomorphisms and projections are uniformly bounded) which is isometric to a 1-complemented subspace of $\Lip_0(\ell_1)$ by Fact~\ref{fact:retractionImpliesComplementedLipSpaces}. Putting all together, we get
\[
	\Lip_0(\ell_1)\complemented \bigoplus_{\ell_\infty}\Lip_0(\zet^{<\Nat})\complemented \bigoplus_{\ell_\infty}\Big(\bigoplus_{\ell_\infty}\Lip_0(\zet^n)\Big)\complemented \bigoplus_{\ell_\infty}\bigoplus_{\ell_\infty}\Lip_0(\ell_1)\simeq \Lip_0(\ell_1)
\]
and an application of the Pe\l czy\' nski's decomposition method finishes the proof.
\end{proof}

Let us note that we do not know whether $\Lip_0(\zet^{<\Nat})$ is isomorphic to its $\ell_\infty$-sum.

\subsection{Lipschitz spaces over Orlicz spaces}
The methods developed for good pairs of metric spaces from the first and second sections can be applied even in the infinite-dimensional setting. As we will see, $(\ell_p,L_p)$ is a prime example of a good pair in this context. It turns out however that it is possible to work with sequence and function spaces of greater generality than the standard $(\ell_p,L_p)$-case. Indeed, we are naturally led to the Orlicz spaces, which generalize the $L_p$ case, and the content of this subsection is to exploit our tools in this generality. We start with basic preliminaries.
\medskip

Let $(\Omega,\Sigma,\mu)$ be a $\sigma$-finite measure space and $\Phi$ be a \emph{Young function} satisfying the $\Delta_2$-condition, namely, a function   $\Phi: \left[0,\infty\right)\to \left[0,\infty\right)$ satisfying 

\begin{enumerate}[(1)]
\item $\Phi(0)=0$;
\item\label{cond:two} $\Phi$ is convex and increasing;
\item $\Phi$ satisfies the \emph{$\Delta_2$-condition}: there is $K>0 $ such that $\Phi(2x)\leq K\Phi(x)$ for all $x > 0$.
\end{enumerate}

\begin{remark}
From convexity and condition (1) it follows that a Young function $\Phi$ is continuous at $0$, therefore, it is continuous in $[0,\infty)$. From the condition (2) one may easily deduce that $\displaystyle\liminf_{x\to \infty} \Phi(x)/x > 0$; in particular, $\Phi$ is unbounded. We will call a Young function $\Phi$ an \emph{$N$-function} if it also satisfies 
$\lim_{x\to 0}\frac{\Phi(x)}{x}=0$ and $\lim_{x\to \infty}\frac{\Phi(x)}{x}=\infty$, see \cite[\S II]{Orliczbook}.
\end{remark}

For the sequel, a Young function $\Phi$ is fixed.

Let us denote by $L_\Phi(\Omega,\mu)$ the set of all the real-valued measurable functions $f$ defined $\mu$-almost everywhere on $\Omega$ (functions which agree $\mu$-almost everywhere are identified) such that \[\int_\Omega\Phi(|f|)d\mu<\infty\]
and let 
\[\|f\|_\Phi:=\inf\{r>0\setsep \int_\Omega\Phi(|f|/r)d\mu\leq 1\},\quad f\in L_\Phi(\Omega,\mu)\] 
It is a classical result, see e.g. \cite[Chapter III]{Orliczbook}, that $(L_\Phi(\Omega,\mu),\|\cdot\|_\Phi)$ is a Banach space. We refer the reader to \cite{Orliczbook} for more information. 

If $\Omega$ is some Lebesgue-measurable subset of $\Rea$ and $\mu$ is the Lebesgue measure restricted to $\Omega$, we will denote $L_\Phi(\Omega,\mu)$ by $L_\Phi(\Omega)$. If $\Omega=\Nat$ and $\mu$ is the counting measure on $\Nat$, we will denote $L_\Phi(\Omega,\mu)$ by $\ell_\Phi$, the space of all sequences $(x_n)_{n=1}^\infty\in \Rea^\Nat$ such that 
\[\|(x_n)_n\|_\Phi:=\inf\{r>0\setsep \sum_{i=1}^\infty \Phi(\frac{|x_i|}{r})\leq 1\}<\infty,\] also called \emph{Orlicz sequence space}, defined in \cite{LiTz77}.

\begin{fact}\label{fact: LphisubsetL1} If $\mu(\Omega)<\infty$, then $L_\Phi(\Omega,\mu)\subset L_1(\Omega,\mu)$.
\end{fact}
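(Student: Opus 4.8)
The plan is to reduce the inclusion to a single pointwise comparison between $|f|$ and $\Phi(|f|)$, valid for large values of the argument, and then to absorb the remaining (bounded) part of $|f|$ using the finiteness of the total measure. The only input beyond the definition of $L_\Phi(\Omega,\mu)$ is the growth estimate already recorded in the Remark above, namely that $\liminf_{x\to\infty}\Phi(x)/x > 0$.

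First I would extract from $\liminf_{x\to\infty}\Phi(x)/x>0$ two constants $c>0$ and $x_0>0$ such that $\Phi(t)\geq ct$ for every $t\geq x_0$. Fix $f\in L_\Phi(\Omega,\mu)$, so that by definition $\int_\Omega\Phi(|f|)\,d\mu<\infty$. I would then split the domain according to the size of $|f|$, writing $A:=\{x\in\Omega\setsep |f(x)|\leq x_0\}$ and $B:=\{x\in\Omega\setsep |f(x)|> x_0\}$, and estimate the two contributions to $\int_\Omega|f|\,d\mu$ separately.

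On $B$ the linear lower bound gives the pointwise inequality $|f|\leq \tfrac1c\,\Phi(|f|)$, whence $\int_B|f|\,d\mu\leq \tfrac1c\int_\Omega\Phi(|f|)\,d\mu<\infty$. On $A$ the function $|f|$ is bounded by $x_0$, so $\int_A|f|\,d\mu\leq x_0\,\mu(\Omega)<\infty$; this is precisely where the hypothesis $\mu(\Omega)<\infty$ enters. Adding the two bounds yields $\int_\Omega|f|\,d\mu<\infty$, that is, $f\in L_1(\Omega,\mu)$, which is the desired inclusion.

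There is no real obstacle here: the one point that deserves a line of justification is the linear lower bound $\Phi(t)\geq ct$ for large $t$, but this is exactly the content of the Remark, following from the convexity of $\Phi$ and $\Phi(0)=0$ (these force $t\mapsto\Phi(t)/t$ to be non-decreasing, hence to have a strictly positive limit once $\Phi$ takes a positive value). I would also emphasize what the proof does \emph{not} use: the $\Delta_2$-condition plays no role, whereas the finiteness of $\mu(\Omega)$ is essential, since the inclusion fails for infinite measures (for instance $\ell_\Phi\not\subset\ell_1$ already for $\Phi(x)=x^p$ with $p>1$).
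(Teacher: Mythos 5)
Your proof is correct and is essentially identical to the one in the paper: both extract a linear lower bound $\Phi(t)\geq ct$ for $t\geq x_0$ from $\liminf_{x\to\infty}\Phi(x)/x>0$, split $\Omega$ according to whether $|f|$ exceeds $x_0$, and bound the two pieces by $\tfrac1c\int_\Omega\Phi(|f|)\,d\mu$ and $x_0\,\mu(\Omega)$ respectively. Your additional remarks on where $\mu(\Omega)<\infty$ is used and why $\Delta_2$ is not needed are accurate but not part of the paper's argument.
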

\begin{proof}
Let $f$ be an arbitrary function from $L_\Phi(\Omega,\mu)$. Since $\Phi$ is a convex increasing function, we have $\liminf_{x\to \infty} \Phi(x)/x > 0$. It means that there exists $\delta>0$ and $N>0$ so that $\Phi(x)/x\geq\delta$ whenever $x\geq N$. Defining $\Omega_N=\{x\in \Omega: |f(x)|\geq N\}$, we may write 
\begin{align*}\int_\Omega|f|d\mu&=\int_{\Omega_N}|f|d\mu+\int_{\Omega\setminus \Omega_N}|f|d\mu \\
                                &\leq \delta^{-1}\int_{\Omega}\Phi(|f|)d\mu+N\mu(\Omega)<\infty, 
\end{align*}
and deduce that $f \in L_1(\Omega,\mu)$.
\end{proof}

The proof of the next well known fact is easy and therefore will be omitted.

\begin{fact}\label{Fact:Orlicz}
If $0<\alpha<\beta$, then the identity map $Id:L_{\alpha \Phi}(\Omega)\to L_{\beta \Phi}(\Omega)$ is an isomorphism with distortion 
$\|Id\|\|Id^{-1}\|\leq \frac{\beta}{\alpha}$.
\end{fact}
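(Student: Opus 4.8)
The plan is to observe first that, for any $c>0$, the scaled function $c\Phi$ is again a Young function (the conditions $\Phi(0)=0$, convexity, monotonicity and $\Delta_2$ are all preserved by a positive scalar), and that as sets of measurable functions $L_{\alpha\Phi}(\Omega)=L_{\beta\Phi}(\Omega)=L_\Phi(\Omega)$, since finiteness of $\int_\Omega\Phi(|f|)\,d\mu$ is unaffected by a positive multiplicative constant. Hence $Id$ is a genuine linear bijection and the entire statement reduces to comparing the two Luxemburg norms. I would rewrite both through the single modular $\rho(g):=\int_\Omega\Phi(|g|)\,d\mu$, using that the constant can be pulled out of the integral:
\[
\|f\|_{\alpha\Phi}=\inf\{r>0\setsep \rho(f/r)\le \tfrac1\alpha\},\qquad \|f\|_{\beta\Phi}=\inf\{r>0\setsep \rho(f/r)\le \tfrac1\beta\}.
\]

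First I would record the easy inequality. Since $\alpha<\beta$ gives $\tfrac1\alpha>\tfrac1\beta$, every $r$ admissible in the $\beta$-problem is admissible in the $\alpha$-problem, so the $\alpha$-infimum is taken over a larger set of $r$'s; therefore $\|f\|_{\alpha\Phi}\le\|f\|_{\beta\Phi}$, which already yields $\|Id^{-1}\|\le 1$. The real content is the reverse estimate $\|f\|_{\beta\Phi}\le\tfrac{\beta}{\alpha}\|f\|_{\alpha\Phi}$. Writing $N:=\|f\|_{\alpha\Phi}$, I would note that $\rho(f/N)\le\tfrac1\alpha$: as $r\downarrow N$ the functions $\Phi(|f|/r)$ increase pointwise to $\Phi(|f|/N)$ (because $\Phi$ is increasing and continuous), so monotone convergence passes the constraint $\rho(f/r)\le\tfrac1\alpha$ to the limit. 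Then I would set $s:=\tfrac{\beta}{\alpha}N$ and invoke the elementary convexity estimate $\Phi(\lambda t)\le\lambda\Phi(t)$ valid for $0\le\lambda\le 1$ (which holds for any convex $\Phi$ with $\Phi(0)=0$). Applying it with $\lambda=\alpha/\beta$ gives
\[
\rho(f/s)=\int_\Omega\Phi\big(\tfrac{\alpha}{\beta}\,\tfrac{|f|}{N}\big)\,d\mu\le\tfrac{\alpha}{\beta}\,\rho(f/N)\le\tfrac{\alpha}{\beta}\cdot\tfrac1\alpha=\tfrac1\beta,
\]
so $s$ is admissible in the $\beta$-problem and $\|f\|_{\beta\Phi}\le s=\tfrac{\beta}{\alpha}N$, as required.

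Combining the two bounds, $\|f\|_{\alpha\Phi}\le\|f\|_{\beta\Phi}\le\tfrac{\beta}{\alpha}\|f\|_{\alpha\Phi}$ for every $f$, whence $\|Id\|\le\beta/\alpha$ and $\|Id^{-1}\|\le 1$, giving the claimed distortion bound $\|Id\|\,\|Id^{-1}\|\le\beta/\alpha$. The only mildly delicate point is the justification of $\rho(f/N)\le\tfrac1\alpha$ at the infimum, which is exactly the kind of attainment statement handled by monotone convergence; everything else is the one-line convexity inequality $\Phi(\lambda t)\le\lambda\Phi(t)$. It is worth emphasizing that this is where convexity (condition~(\ref{cond:two})) is used, and that the $\Delta_2$-condition plays no role in this particular fact.
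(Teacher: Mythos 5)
Your proof is correct. The paper offers no proof of its own for this fact (it is explicitly omitted as ``easy and well known''), so there is nothing to compare against; your argument --- expressing both Luxemburg norms through the single modular $\rho(g)=\int_\Omega\Phi(|g|)\,d\mu$, using monotone convergence to see that the constraint survives at the infimum, and the convexity inequality $\Phi(\lambda t)\le\lambda\Phi(t)$ for $0\le\lambda\le1$ --- is the standard one and yields exactly the claimed bounds $\|Id^{-1}\|\le 1$ and $\|Id\|\le\beta/\alpha$.
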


\begin{remark}By applying \cite[Theorem 17.41]{Kechris} and by using some standard measure theory arguments, it is easy to check that if $(\Omega,\Sigma)$ is a standard Borel space and $\mu$ is non-atomic $\sigma$-finite measure, then $L_\Phi(\Omega,\mu)$ is isomorphic to either $L_\Phi[0,1]$ or  $L_\Phi[0,\infty)$ depending if $\mu$ is finite or infinite respectively, therefore, in what follows we will restrict our investigations to $L_\Phi[0,1]$ and $L_\Phi[0,\infty)$.
\end{remark}

The main result of this section is as follows. We note that the `in particular' part can be derived already from Theorem \ref{thm:scriptLp}. Further, let us emphasize that by $\ell_{\alpha_n\Phi}$ below we denote the Orlicz space associated to the function $\alpha_n\Phi$.

\begin{thm}\label{thm:OrliczLip}If $(\alpha_n)_n$ is a sequence of strictly positive real numbers with $\displaystyle\lim_{n\to \infty}\alpha_n=0$, then 
\[\Lip_0(L_\Phi[0,\infty))\simeq \bigoplus_{\ell_{\infty}}\Lip_0(\ell_{\alpha_n\Phi}).\] 
In particular, $\Lip_0(\ell_p)\simeq \Lip_0(L_p)$ for $1\leq p <\infty$. 
\end{thm}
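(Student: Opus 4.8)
The plan is to invoke the Pe\l czy\'nski decomposition method (Theorem~\ref{t:pel}) for $X:=\Lip_0(L_\Phi[0,\infty))$ and $Y:=\bigoplus_{\ell_\infty}\Lip_0(\ell_{\alpha_n\Phi})$. Since $L_\Phi[0,\infty)$ is a Banach space, Theorem~\ref{thm:kaufman} already gives $X\simeq\bigoplus_{\ell_\infty}X$, so the whole statement reduces to proving the two complementations $Y\complemented X$ and $X\complemented Y$.

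Everything hinges on a single elementary observation. For $\ell>0$ consider the step functions $f=\sum_k x_k\mathbf 1_{[k\ell,(k+1)\ell)}$ in $L_\Phi[0,\infty)$; then $\int_0^\infty\Phi(|f|/r)\,dt=\sum_k \ell\,\Phi(|x_k|/r)$, and since $\ell\Phi$ is again a Young function satisfying the $\Delta_2$-condition, this subspace is isometric to the Orlicz sequence space $\ell_{\ell\Phi}$. Choosing $\ell=\alpha_n$ produces, for each $n$, an isometric embedding $i_n\colon\ell_{\alpha_n\Phi}\to L_\Phi[0,\infty)$ with $i_n(0)=0$ whose image is exactly the step functions with mesh $\alpha_n$. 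The conditional expectation $E_n$ (averaging over the intervals of length $\alpha_n$) is a linear retraction onto this image, and Jensen's inequality for the convex $\Phi$ gives $\int_0^\infty\Phi(|E_nf|/r)\,dt\le\int_0^\infty\Phi(|f|/r)\,dt$, so $\|E_n\|\le 1$.

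To get $X\complemented Y$ I would apply the Key Lemma (Lemma~\ref{lem:key}) with $M=L_\Phi[0,\infty)$, $M_n=\ell_{\alpha_n\Phi}$, the maps $i_n$ above, and $D_1=D_2=1$, $\varepsilon_n=0$. The only substantial point is the density hypothesis: for each $f\in L_\Phi[0,\infty)$ one needs $x^{(n)}\in\ell_{\alpha_n\Phi}$ with $i_n(x^{(n)})\to f$. I would take $x^{(n)}$ to be the averages of $f$, so that $i_n(x^{(n)})=E_nf$, and verify $\|E_nf-f\|_\Phi\to 0$. Because the $E_n$ are uniform contractions and, by the $\Delta_2$-condition, compactly supported continuous functions are dense in $L_\Phi[0,\infty)$, it suffices to check this for such $g$, where $E_ng\to g$ follows from uniform continuity of $g$ together with $\alpha_n\to 0$. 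I expect this $L_\Phi$-convergence of conditional expectations along the (non-nested) partitions of mesh $\alpha_n$ to be the main point requiring care; the Key Lemma then yields $X\complemented\bigoplus_{\ell_\infty}\Lip_0(\ell_{\alpha_n\Phi})=Y$.

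For $Y\complemented X$ I would use the retractions $E_n$: by Fact~\ref{fact:retractionImpliesComplementedLipSpaces} each $\Lip_0(\ell_{\alpha_n\Phi})$ is isometric to a $1$-complemented subspace of $X$, the associated norm-one projections $P_n(f):=(f|_{i_n(\ell_{\alpha_n\Phi})})\circ E_n$ being uniformly bounded. The diagonal operator $(f_n)\mapsto(P_nf_n)$ is then a norm-one projection of $\bigoplus_{\ell_\infty}X$ onto an isometric copy of $Y$, and $\bigoplus_{\ell_\infty}X\simeq X$ by Theorem~\ref{thm:kaufman}, so $Y\complemented X$. With both complementations in hand and $X\simeq\bigoplus_{\ell_\infty}X$, Theorem~\ref{t:pel} gives $X\simeq Y$. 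For the `in particular' part I set $\Phi(x)=x^p$; then $\|\cdot\|_{\alpha\Phi}=\alpha^{1/p}\|\cdot\|_{p}$, so each $\ell_{\alpha_n\Phi}$ is isometrically isomorphic to $\ell_p$ and hence $Y=\bigoplus_{\ell_\infty}\Lip_0(\ell_p)\simeq\Lip_0(\ell_p)$ (again Theorem~\ref{thm:kaufman}), while $L_\Phi[0,\infty)=L_p[0,\infty)\simeq L_p$, yielding $\Lip_0(\ell_p)\simeq\Lip_0(L_p)$.
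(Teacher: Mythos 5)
Your proposal is correct and follows essentially the same route as the paper: isometric embeddings of $\ell_{\alpha_n\Phi}$ as mesh-$\alpha_n$ step functions plus the Key Lemma give $\Lip_0(L_\Phi[0,\infty))\complemented\bigoplus_{\ell_\infty}\Lip_0(\ell_{\alpha_n\Phi})$, the norm-one conditional expectations give the reverse complementation via Fact~\ref{fact:retractionImpliesComplementedLipSpaces} and Theorem~\ref{thm:kaufman}, and Pe\l czy\'nski finishes. The only (immaterial) difference is in verifying the density hypothesis of the Key Lemma: you reduce to compactly supported continuous functions using the uniform contractivity of the $E_n$, whereas the paper truncates a given $f$ and invokes the Lebesgue differentiation and dominated convergence theorems.
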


From the previous theorem and an application of a classical result we obtain another consequence. 

\begin{thm}\label{thm:OrliczL[0,1]}
If $\Phi$ is a $N$-function and $L_\Phi[0,1]$ is reflexive, then there exists a sequence $(X_n)_n$ of Banach spaces such that 
$X_n\simeq \ell_2$ for each $n \in \mathbb{N}$ and
\[\Lip_0(L_\Phi[0,1])\simeq \bigoplus_{\ell_\infty}\Lip_0(X_n).\]
In particular, each $\Lip_0(L_p)$ for $1< p <\infty$ admits a decomposition as above.
\end{thm}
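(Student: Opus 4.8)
The plan is to reduce the statement to Theorem~\ref{thm:OrliczLip} by replacing $\Phi$ with a Young function that is \emph{quadratic near the origin}, so that the Orlicz sequence spaces produced by that theorem collapse to $\ell_2$. First I would record that reflexivity of $L_\Phi[0,1]$ together with the $N$-function hypothesis forces $\Phi$ to satisfy both the $\Delta_2$- and the $\nabla_2$-condition for large arguments. Since $L_\Phi[0,1]$ lives on a space of finite measure, only the behaviour of $\Phi$ for large arguments is relevant: if $\widetilde\Phi$ is any $N$-function with $\widetilde\Phi(t)=t^2$ for $t$ small and $\widetilde\Phi(t)=\Phi(t)$ for $t$ large, then $f\in L_\Phi[0,1]$ iff $f\in L_{\widetilde\Phi}[0,1]$, and by the $\Delta_2$-condition the two norms are equivalent (on the finite measure space the part $\{|f|\le 1\}$ is controlled by $\widetilde\Phi(1)$, exactly in the spirit of Fact~\ref{fact: LphisubsetL1}). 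Thus $L_\Phi[0,1]\simeq L_{\widetilde\Phi}[0,1]$. By construction $\widetilde\Phi\in\Delta_2\cap\nabla_2$ globally, so $L_{\widetilde\Phi}[0,\infty)$ is again a reflexive Orlicz space over an $N$-function, and Theorem~\ref{thm:OrliczLip} applies to it.

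The heart of the argument, and the step I expect to be the main obstacle, is the classical isomorphism $L_{\widetilde\Phi}[0,1]\simeq L_{\widetilde\Phi}[0,\infty)$. The heuristic is that on the infinite measure space the region $\{|f|>1\}$ automatically has finite measure and carries a reflexive ``$L_\Phi[0,1]$-part'', while the region $\{|f|\le 1\}$ is governed by $\widetilde\Phi(t)\approx t^2$ and so contributes a copy of $L_2[0,\infty)\simeq \ell_2$; this suggests a decomposition $L_{\widetilde\Phi}[0,\infty)\simeq L_{\widetilde\Phi}[0,1]\oplus \ell_2$, which one has to make precise. On the other hand, because $L_{\widetilde\Phi}[0,1]$ is a reflexive Orlicz space, the closed span of a sequence of independent Gaussian random variables is isometric to $\ell_2$ and is complemented (this is the classical input in which reflexivity is genuinely used); hence $L_{\widetilde\Phi}[0,1]\simeq L_{\widetilde\Phi}[0,1]\oplus\ell_2$, since a complemented $\ell_2$ can be absorbed. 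Combining the two isomorphisms yields $L_{\widetilde\Phi}[0,1]\simeq L_{\widetilde\Phi}[0,\infty)$. The delicate point to be verified carefully is the first decomposition, where the finite- and infinite-measure structures must be matched; the $N$-function hypothesis and global $\Delta_2\cap\nabla_2$ are precisely what keep all the implicit constants under control.

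With this in hand the conclusion is immediate. Applying Theorem~\ref{thm:OrliczLip} to $\widetilde\Phi$ together with any sequence $(\alpha_n)_n$ of positive reals tending to $0$ gives
\[
\Lip_0(L_\Phi[0,1])\simeq \Lip_0(L_{\widetilde\Phi}[0,\infty))\simeq \bigoplus_{\ell_\infty}\Lip_0(\ell_{\alpha_n\widetilde\Phi}).
\]
It then remains to set $X_n:=\ell_{\alpha_n\widetilde\Phi}$ and to check that each $X_n\simeq\ell_2$: by the sequence-space analogue of Fact~\ref{Fact:Orlicz} one has $\ell_{\alpha_n\widetilde\Phi}\simeq \ell_{\widetilde\Phi}$ for every $n$, and since $\widetilde\Phi(t)\approx t^2$ near $0$ the Orlicz sequence space $\ell_{\widetilde\Phi}$ is isomorphic to $\ell_2$ by the standard theory of Orlicz sequence spaces. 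Finally, the ``in particular'' statement follows by taking $\Phi(t)=t^p$, which is a reflexive $N$-function for every $1<p<\infty$.
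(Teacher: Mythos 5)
Your proposal follows essentially the same route as the paper: replace $\Phi$ by a Young function that is quadratic near $0$ and equivalent to $\Phi$ at infinity (the paper takes $\Psi(t)=t^2\chi_{[0,1]}+(2\Phi(t)-1)\chi_{[1,\infty)}$ after normalizing $\Phi(1)=1$), identify $L_\Phi[0,1]$ with $L_\Psi[0,\infty)$, apply Theorem~\ref{thm:OrliczLip}, and use \cite[Proposition 4.a.5]{LiTz77} to see that the resulting Orlicz sequence spaces are isomorphic to $\ell_2$. The one step you rightly flag as delicate --- the isomorphism $L_\Phi[0,1]\simeq L_\Psi[0,\infty)$, which you sketch via a decomposition and absorption of a complemented $\ell_2$ --- is exactly the point the paper does not prove by hand but outsources to \cite[Theorem 8.6]{JoMaSchTz}, using reflexivity and the $N$-function hypothesis (via the $\Delta_2$- and $\nabla_2$-conditions) to verify its hypotheses.
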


In order to give a proof of Theorem \ref{thm:OrliczLip} we will first establish a couple of lemmas.

\begin{lemma}\label{lem:Orliczgoodpair} Let $(\alpha_n)_n$ be a sequence of strictly positive real numbers. If $\displaystyle\lim_{n\to \infty}\alpha_n=0$, then \[\Lip_0(L_\Phi[0,\infty))\complemented\bigoplus_{\ell_{\infty}}\Lip_0(\ell_{\alpha_n\Phi}).\]
\end{lemma}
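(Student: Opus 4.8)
The plan is to realize the lemma as a direct application of the Key Lemma~\ref{lem:key}, taking $M=L_\Phi[0,\infty)$ and $M_n=\ell_{\alpha_n\Phi}$. The maps $i_n\colon\ell_{\alpha_n\Phi}\to L_\Phi[0,\infty)$ I would use are the natural \emph{step-function} embeddings: fix for each $n$ the partition of $[0,\infty)$ into the consecutive intervals $I^{(n)}_k:=[(k-1)\alpha_n,k\alpha_n)$, $k\in\Nat$, each of length $\alpha_n$, and set
\[
i_n\big((x_k)_k\big):=\sum_{k=1}^\infty x_k\,\mathbf 1_{I^{(n)}_k}.
\]
Each $i_n$ is linear with $i_n(0)=0$, so to fit the hypotheses of the Key Lemma it suffices to check that it is an isometry onto its image; one may then take $D_1=D_2=1$ and $\varepsilon_n=0$.

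The isometry is an immediate consequence of the definitions of the two Orlicz (pseudo)norms. Indeed, for any $r>0$,
\[
\int_0^\infty\Phi\Big(\tfrac{|i_n((x_k))|}{r}\Big)\,d\mu=\sum_{k=1}^\infty\alpha_n\,\Phi\Big(\tfrac{|x_k|}{r}\Big)=\sum_{k=1}^\infty(\alpha_n\Phi)\Big(\tfrac{|x_k|}{r}\Big),
\]
so the set of admissible $r$ defining $\|i_n((x_k))\|_\Phi$ in $L_\Phi$ coincides with the one defining $\|(x_k)\|_{\ell_{\alpha_n\Phi}}$; hence $\|i_n((x_k))\|_\Phi=\|(x_k)\|_{\ell_{\alpha_n\Phi}}$. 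By linearity $i_n$ is thus a $1$-Lipschitz map whose inverse on $i_n(\ell_{\alpha_n\Phi})$ is $1$-Lipschitz, in particular $(1,0)$-Coarse Lipschitz.

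It remains to verify the density requirement of the Key Lemma: for every $f\in L_\Phi[0,\infty)$ there should be a sequence $x^{(n)}\in\ell_{\alpha_n\Phi}$ with $i_n(x^{(n)})\to f$. This is precisely where the hypotheses $\alpha_n\to0$ and the standing $\Delta_2$-condition on $\Phi$ enter. I would take $x^{(n)}_k:=\tfrac1{\alpha_n}\int_{I^{(n)}_k}f\,d\mu$, so that $i_n(x^{(n)})=E_nf$ is the conditional expectation of $f$ onto the $\sigma$-algebra generated by $\{I^{(n)}_k\}_k$. A conditional Jensen inequality gives $\Phi(|E_nf|)\le E_n\Phi(|f|)$ pointwise, and integrating shows that $E_n$ is a contraction on $L_\Phi$; in particular $x^{(n)}\in\ell_{\alpha_n\Phi}$.

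The main obstacle is showing that $E_nf\to f$ in $L_\Phi$. Since the partitions for distinct $n$ need not be nested, I would not invoke martingale convergence but instead argue by density: under the $\Delta_2$-condition the compactly supported uniformly continuous functions are dense in $L_\Phi[0,\infty)$, and for such a $g$ one has $E_ng\to g$ uniformly and with supports contained in a fixed bounded set (because the mesh $\alpha_n\to0$), whence $E_ng\to g$ in $L_\Phi$. Combining this with the uniform bound $\|E_n\|\le1$ through the standard three-$\varepsilon$ estimate $\|E_nf-f\|_\Phi\le2\|f-g\|_\Phi+\|E_ng-g\|_\Phi$ yields $E_nf\to f$ for arbitrary $f$. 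With the density condition established, the Key Lemma~\ref{lem:key} applies and gives $\Lip_0(L_\Phi[0,\infty))\complemented\bigoplus_{\ell_\infty}\Lip_0(\ell_{\alpha_n\Phi})$, as desired.
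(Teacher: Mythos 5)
Your proposal is correct and follows the same strategy as the paper's proof: the identical step-function isometric embeddings $i_n\colon\ell_{\alpha_n\Phi}\to L_\Phi[0,\infty)$, the identical modular computation showing they are isometries, and then an application of the Key Lemma. The only divergence is in verifying the density condition: the paper truncates $f$ to a bounded interval and invokes the Lebesgue Differentiation Theorem together with dominated convergence, whereas you use contractivity of the conditional expectations $E_n$ on $L_\Phi$ (via Jensen) combined with density of compactly supported uniformly continuous functions and a three-$\varepsilon$ estimate --- an equally valid, and arguably cleaner, route for that single technical step.
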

\begin{proof}
For each $m \in \Nat$, consider the operator $T_m:\ell_{\alpha_m\Phi}\to L_\Phi[0,\infty)$ given by 
\[T_m((x_n)_n)=\sum_{n=1}^{\infty}x_n\cdot \chi_{[\alpha_m(n-1),\alpha_m n)}.\]
If $T_m((x_n)_n)=f$, then for each $r>0$ we have
\begin{align*}\int_0^\infty\Phi(\frac{|f|}{r}) d\mu&=\sum_{n=1}^\infty \int_{\alpha_m(n-1)}^{\alpha_m n}\Phi(\frac{|f|}{r}) d\mu\\
&=\sum_{n=1}^\infty \int_{\alpha_m(n-1)}^{\alpha_m n}\Phi(\frac{|x_n|}{r}) d\mu= \sum_{n=1}^\infty\alpha_m\Phi(\frac{|x_n|}{r}).
\end{align*}
It follows that $\|T_m((x_n)_n)\|_\Phi=\|(x_n)_n\|_{\alpha_m\Phi}$ and we deduce that $T_m$ is an isometric embedding of $\ell_{\alpha_m\Phi}$ into $L_{\Phi}[0,\infty)$.

Next, we will prove that for any given $f \in L_{\Phi}[0,\infty)$ and $\varepsilon>0$ there is $m_0\in \Nat$ such that for each $m\geq m_0$ there is $h\in T_m[\ell_{\alpha_m\Phi}]$ with \[\int_0^\infty\Phi(\frac{|f-h|}{\varepsilon})d\mu\leq 1\] 
and it will follow that $\|f-h\|_\Phi\leq \varepsilon$.

Because $f\in L_\Phi[0,\infty)$ and $\Phi$ satisfies the $\Delta_2$-condition on the whole interval $\left[0,\infty\right)$, we deduce that $f/\varepsilon\in L_\Phi[0,\infty)$. Then  $\int_0^\infty\Phi(\frac{|f|}{\varepsilon})d\mu<\infty$ and we may fix $N\in \mathbb{N}$ such that $\int_N^\infty\Phi(\frac{|f|}{\varepsilon})d\mu<\frac{1}{2}$.

From Fact \ref{fact: LphisubsetL1} we have that $f|_{[0,N+1]}$ belongs to $L_1[0,N+1]$. Because $\alpha_n>0$ for each $n$ and $\lim_{n \to \infty}\alpha_n=0$, if $\floor{x}=\max\{m\in \Nat:m\leq x\}$, then $\lim_{n \to \infty}\alpha_n\floor{\alpha_n^{-1}(N+1)}=N+1$ and it is standard to check, by using the Lebesgue Differentiation Theorem, that the sequence of step functions $(h_n)_n$ given by the expression 
\[h_n=\sum_{j=1}^{\floor{\alpha_n^{-1}(N+1)}}a_j\cdot\chi_{[\alpha_n(j-1),\alpha_n j)}+\chi_{[\alpha_n\floor{\alpha_n^{-1}(N+1)},N+1]}\]
where $\displaystyle a_j=\alpha_n\int_{\alpha_n(j-1)}^{\alpha_n j} f d \mu $ for each $j$, converges pointwise to $f$ $\mu$-almost everywhere in $[0,N+1]$.

Since $\Phi$ is continuous at $0$, the sequence $(\Phi\circ \frac{|f - h_n|}{\varepsilon})_{n=1}^\infty$, seen as elements of $L_1[0,N+1]$, converge pointwise to the zero function $\mu$-almost everywhere. Then, by the Lebesgue's Dominated Convergence Theorem, there is $m_0$ such that if $m\geq m_0$, then $\int_0^{N+1}\Phi(\frac{|f-h_m|}{\varepsilon})\leq \frac{1}{2}.$ We may also assume $m_0$ sufficiently large
so that $N\leq \alpha_m\floor{\alpha_m^{-1}(N+1)}$ whenever $m\geq m_0$. Then, if $m\geq m_0$ we consider
\[h=\sum_{j=1}^{\infty}x_j\cdot \chi_{[\alpha_m(j-1),\alpha_m j)}\] 
where $x_j=h_m(\alpha_m(j-1))$ if $j\leq \floor{\alpha_m^{-1}(N+1)}$ and $x_j=0$ for otherwise. Clearly, $h$ is an element 
of $T_m[\ell_{\alpha_m\Phi}]$. Finally,
\begin{align*}\int_0^\infty\Phi(\frac{|f-h|}{\varepsilon})&= \int_0^N\Phi(\frac{|f-h|}{\varepsilon})+\int_N^\infty\Phi(\frac{|f-h|}{\varepsilon})\\
&\leq\int_0^{\floor{\alpha_m^{-1}(N+1)}}\Phi(\frac{|f-h_m|}{\varepsilon})+\int_N^\infty\Phi(\frac{|f|}{\varepsilon})\leq \frac{1}{2}+\frac{1}{2}=1.
\end{align*}
The thesis follow by a direct application of the Key Lemma \ref{lem:key}.
\end{proof}

\begin{remark}
From Fact \ref{Fact:Orlicz}, the spaces $\ell_{\alpha_n \Phi}$ in the previous Lemma are pairwise isomorphic. In the particular case $\Phi(t)=t^p$, for some $1\leq p <\infty$, they are all isometric to $\ell_p$ and the mappings $T_m$ above constitute a sequence of linear embeddings of $\ell_p$ into $L_p[0,\infty)$ with distortion $\|T_m\|\|T_m^{-1}\|$ uniformly bounded by $1$. This fact together with the subsequent argument leads to the conclusion that $(\ell_p,L_p)$ is a good pair.
\end{remark}

\begin{lemma}\label{lem:phicomplemented} For any $\alpha>0$,
$\ell_{\alpha\Phi}$ is isometric to a $1$-complemented subspace of $L_\Phi[0,\infty)$.
\end{lemma}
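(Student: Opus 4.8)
The plan is to exhibit explicit operators $T$ and $S$ between $\ell_{\alpha\Phi}$ and $L_\Phi[0,\infty)$ satisfying $S\circ T=\Id$ with $\|T\|\leq 1$ and $\|S\|\leq 1$, and then to invoke Fact~\ref{fact:findingProjection}. For the embedding I would reuse exactly the operator from the proof of Lemma~\ref{lem:Orliczgoodpair}: setting $A_n:=[\alpha(n-1),\alpha n)$ for $n\in\Nat$, define $T\colon\ell_{\alpha\Phi}\to L_\Phi[0,\infty)$ by $T((x_n)_n)=\sum_{n=1}^\infty x_n\chi_{A_n}$. The same computation as there gives $\int_0^\infty\Phi(|T(x)|/r)\,d\mu=\sum_n\alpha\Phi(|x_n|/r)$, so that $\|T(x)\|_\Phi=\|x\|_{\alpha\Phi}$ and $T$ is an isometry.

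For the projection I would use the averaging (conditional expectation) operator associated with the partition $\{A_n\}$. Since each $A_n$ has finite measure $\alpha$, Fact~\ref{fact: LphisubsetL1} guarantees that every $f\in L_\Phi[0,\infty)$ restricts to an $L_1$ function on $A_n$, so the averages $c_n:=\tfrac1\alpha\int_{A_n}f\,d\mu$ are well defined; I then set $S(f):=(c_n)_n$. It is immediate that $S\circ T=\Id$, because $T(x)$ equals the constant $x_n$ on $A_n$ and averaging returns $x_n$.

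The key step is the norm estimate $\|S\|\leq 1$, which is where convexity of $\Phi$ enters through Jensen's inequality. Fix $r>0$ with $\int_0^\infty\Phi(|f|/r)\,d\mu\leq 1$. On each $A_n$, using that $\Phi$ is increasing together with Jensen's inequality for the convex function $\Phi$ against the probability measure $\tfrac1\alpha\,d\mu$ on $A_n$, I obtain
\[
\Phi\!\left(\frac{|c_n|}{r}\right)\leq\Phi\!\left(\frac1\alpha\int_{A_n}\frac{|f|}{r}\,d\mu\right)\leq\frac1\alpha\int_{A_n}\Phi\!\left(\frac{|f|}{r}\right)d\mu.
\]
Multiplying by $\alpha$ and summing over $n$ yields $\sum_n\alpha\,\Phi(|c_n|/r)\leq\int_0^\infty\Phi(|f|/r)\,d\mu\leq 1$, which in particular shows $(c_n)_n\in\ell_{\alpha\Phi}$ and gives $\|S(f)\|_{\alpha\Phi}\leq r$. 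Taking the infimum over admissible $r$ yields $\|S(f)\|_{\alpha\Phi}\leq\|f\|_\Phi$, so $\|S\|\leq 1$.

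With $\|T\|\,\|S\|\leq 1$ and $S\circ T=\Id$, Fact~\ref{fact:findingProjection} shows that $\ell_{\alpha\Phi}$ is isometric to a $1$-complemented subspace of $L_\Phi[0,\infty)$, as desired. The only genuine subtlety, and the point I would be most careful about, is ensuring the averages $c_n$ are well defined for an arbitrary $f\in L_\Phi$, which is precisely the role of Fact~\ref{fact: LphisubsetL1}; everything else is the standard contractivity of conditional expectation on Orlicz spaces.
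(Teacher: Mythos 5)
Your proof is correct and follows essentially the same route as the paper, which simply asserts that the averaging operator $P(f)=\sum_{n}\bigl(\tfrac1\alpha\int_{\alpha(n-1)}^{\alpha n}f\,d\mu\bigr)\chi_{[\alpha(n-1),\alpha n)}$ (i.e.\ your $T\circ S$) is a norm-one projection onto an isometric copy of $\ell_{\alpha\Phi}$, leaving the verification as ``well-known and straightforward.'' You have merely supplied the details the paper omits, namely the Jensen-inequality estimate for $\|S\|\leq 1$ and the integrability of $f$ on each cell via Fact~\ref{fact: LphisubsetL1}.
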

\begin{proof}
It is well-known and straightforward to check that the expression
\[P(f)=\sum_{n=1}^{\infty}(\frac{1}{\alpha}\int_{\alpha(n-1)}^{\alpha n}fd\mu)\cdot \chi_{[\alpha(n-1),\alpha n)}.\]
defines a $1$-projection from $ L_\Phi$ onto an isometric copy of $\ell_{\alpha \Phi}$.
\end{proof}

We are now in position of proving the main results of this section:

\begin{proof}[Proof of the Theorem \ref{thm:OrliczLip}]
It follows from Lemma \ref{lem:phicomplemented} and Fact~\ref{fact:retractionImpliesComplementedLipSpaces} that $\Lip_0(\ell_{\alpha_n\Phi})$ is $1$-complemented in $\Lip_0(L_\Phi[0,\infty))$ for each $n \in \Nat$. Then, by applying Theorem~\ref{thm:kaufman}, we get $\displaystyle\bigoplus_{\ell_{\infty}}\Lip_0(\ell_{\alpha_n})\complemented \Lip_0(L_\Phi[0,\infty))$. By Lemma~\ref{lem:Orliczgoodpair} and an application of the Pe\l czy\' nski decomposition method we obtain $\displaystyle\bigoplus_{\ell_{\infty}}\Lip_0(\ell_{\alpha_n})\simeq \Lip_0(L_\Phi[0,\infty))$. The ``In particular'' part follows from the easily verifiable fact that for $\Phi(t) = t^p$ we have $\ell_{\alpha_n\Phi}$ isometric to $\ell_p$ for each $n\in\Nat$.
\end{proof}

\begin{remark}\label{Rem:aux}For the next proof, we recall that if $\Phi$ is an $N$-function, according to  \cite[Corollary IV.4.12]{Orliczbook}, $L_\Phi[0,1]$ is reflexive if and only if $\Phi$ satisfies both $\Delta_2$ and $\nabla_2$-condition (i.e., there is some 
$K>1$ so that $\Phi(x)\leq \frac{\Phi(Kx)}{2K}$ for all $x$), which is equivalent, see \cite[Corollary II.2.3]{Orliczbook}, to the fact that both $\Phi$ and its complementary Young function $\Psi$ ($\Psi(t)=\sup_{s>0}\{st-\Phi(s)\}$, $t\geq 0$) satisfy the $\Delta_2$-condition. These conditions imply nontrivial Boyd indexes for $L_\Phi[0,1]$ (see e.g. \cite[Proposition 2.b.2]{LiTzII}), for which \cite[Theorem 8.6]{JoMaSchTz} applies, see also \cite[Theorem 2.f.1]{LiTzII}.
\end{remark}

\begin{proof}[Proof of the Theorem \ref{thm:OrliczL[0,1]}]
Without loss of generality we may assume that $\Phi(1)=1$ (for otherwise we may replace $\Phi$ by $\widehat{\Phi}=(\Phi(1))^{-1}\Phi)$; from Fact \ref{Fact:Orlicz} $L_\Phi[0,1]\simeq L_{\widehat{\Phi}}[0,1]$ and consequently $\Lip_0(L_\Phi[0,1])\simeq \Lip_0(L_{\widehat{\Phi}}[0,1])$).
Because $\Phi$ is convex, $\Phi(0)=0$ and $\Phi(1)=1$, it follows that $\Psi:[0,\infty)\to [0,\infty)$ given by $\Psi(t)=t^2\chi_{[0,1]}+(2\Phi(t)-1)\chi_{[1,\infty)}$ 
is a Young function equivalent to $t^2$ at $0$ and equivalent to $\Phi$ at $\infty$. Here we recall that two Young functions $\Phi,\Psi:[0,\infty)\to [0,\infty)$ are said to be equivalent at $\infty$ (resp. $0$) if there exists $0<t_0<\infty$ and $A,B,a,b>0$ such that $A\Psi(at)\leq \Phi(t)\leq B\Psi(bt)$ whenever $t\geq t_0$ (resp. $t \leq t_0$), see \cite[\S 7]{JoMaSchTz}. Since $\Phi$ is a $N$-function and $L_\Phi[0,1]$ is reflexive (see Remark \ref{Rem:aux}), by \cite[Theorem 8.6 and the second remark after its statement]{JoMaSchTz}, we have $L_\Phi[0,1]\simeq L_\Psi[0,\infty)$. Then, according to Theorem \ref{thm:OrliczLip}:
\[\Lip_0(L_\Phi[0,1])\simeq\Lip_0(L_\Psi[0,\infty))\simeq\bigoplus_{\ell_{\infty}}\Lip_0(\ell_{(1/n)\Psi}).\]
Since $(1/n)\Psi$ is equivalent to $t^2$ at $0$, from \cite[Proposition 4.a.5.]{LiTz77} we deduce that $\ell_{(1/n)\Psi}\simeq \ell_2$ for each $n \in \Nat$.
\end{proof}

It seems to be an interesting problem to find classes of Orlicz functions $\Phi$ for which we have $\Lip_0(\ell_\Phi)\simeq \Lip_0(L_\Phi)$ e.g. by observing that $(\ell_\Phi, L_\Phi)$ is a good pair (it seems that our proof would work only for functions $\Phi$ satisfying $\Phi(2x)=K\Phi(x)$).

\section{Questions}

In this section we show that our results open quite a lot of space for further investigations. We suggest several natural open problems which we consider interesting.
\subsection{Question related to finite-dimensional spaces}
In this paper, we have considered finitely generated and Lie groups as a natural class of pointed metric spaces for which it is interesting to investigate the corresponding spaces of Lipschitz functions and their preduals, Lipschitz-free spaces. It is of interest to demonstrate that `different' groups have non-isomorphic $\Lip_0$-spaces and Lipschitz-free spaces. One must be careful with the notion of being different and with the class of groups in consideration. For example, all non-abelian free groups are bi-Lipschitz equivalent to each other. Moreover, it follows from the result of Godard \cite{Godard} that for every countable free group $F$ we have $\F(F)\simeq \ell_1$ and  $\Lip_0(F)\simeq \ell_\infty$ since $F$ isometrically embeds into an $\Rea$-tree. This includes $F=\zet$ which is not even quasi-isometric to any non-abelian free group. So we face some restrictions. However, it leads us to pose the following question for hyperbolic groups which is motivated by the fact that asymptotic cones of finitely generated hyperbolic groups are (non-separable) $\Rea$-trees (see e.g. \cite[Proposition 11.167]{DrutuKapovich}.
\begin{question}
Is $\F(\Gamma)\simeq \ell_1$ (and $\Lip_0(\Gamma)\simeq \ell_\infty$) for every finitely generated hyperbolic group $\Gamma$?
\end{question}
Both positive and negative answer would be intriguing. Negative answer would show that the functor $\F(\cdot)$ can distinguish even some hyperbolic groups, while positive answer would imply that there exists a group $\Gamma$ with property (T) with $\F(\Gamma)\simeq \ell_1$ - this follows since there are hyperbolic groups with property (T) (see e.g. \cite[Chapter 19]{DrutuKapovich}). It is known that every action by affine isometries of a group with property (T) on $\ell_1$ with its standard norm has a fixed point (see \cite[Corollary D]{BGM}). On the other hand, since the action of $\Gamma$ on itself by left multiplication induces a proper action by affine isometries on $\F(\Gamma)$, it would imply that there is a renorming of $\ell_1$ on which $\Gamma$ has a proper action by isometries. Note that it is not clear which groups with property (T) have fixed points for isometric actions on all spaces isomorphic to a Hilbert space (see e.g. \cite{Koi} where this is discussed). In particular, we emphasize there is a conjecture of Shalom that there is a hyperbolic group with property (T) with a proper isometric action on a space isomorphic to a Hilbert space (see again \cite{Koi}). This last observation suggests that in general for a group $\Gamma$ with property (T), it is likely that $\F(\Gamma)\not\simeq \ell_1$, or at least the Banach-Mazur distance from $\ell_1$ should be `large'.
\medskip

On the other hand, one of the main open problems in the area of Lipschitz-free spaces and spaces of Lipschitz functions is to distinguish $\Lip_0(\Rea^m)$ and $\Lip_0(\Rea^n)$, or equivalently according to our results, $\Lip_0(\zet^m)$ and $\Lip_0(\zet^n)$, for $m\neq n$. The same for the spaces $\F(\Rea^m)$ and $\F(\Rea^n)$, or $\F(\zet^m)$ and $\F(\zet^n)$. This research project could be brought to a wider context of Carnot groups and their discrete counterparts, finitely generated nilpotent torsion-free group, as considered in this paper. 
\begin{question}
Are the functors $\Lip_0(\cdot)$ and $\F(\cdot)$ complete invariants of quasi-isometry and/or isomorphism for the classes of simply connected nilpotent Lie groups, resp. finitely generated torsion-free nilpotent groups? 
\end{question}
The only results that we are aware of are the following. Naor and Schechtman prove (see \cite{NaorSchecht}) that $\Lip_0(\Rea)\not\simeq \Lip_0(\Rea^2)$ and $\Lip_0(\zet)\not\simeq \Lip_0(\zet^2)$, and the same for the functor $\F(\cdot)$. It immediately follows by the main result from \cite{ChKl} that $\F(\zet)\not\simeq\F(H_3(\zet))$ and $\F(\Rea)\not\simeq\F(H_3(\Rea))$.

To put the previous question into a wider context, we note that it is still one of the main open problems in the metric geometry of Lie groups whether two quasi-isometric simply connected nilpotent Lie groups are isomorphic (see e.g. \cite[Problem 25.44]{DrutuKapovich}). Also, if $\Gamma$ and $\Delta$ are two quasi-isometric finitely generated torsion-free nilpotent groups, then their associated Carnot groups are isomorphic (see \cite{Pansu2} or \cite[Theorem 25.43]{DrutuKapovich}). On the other hand, it is still open whether their Mal'cev closures are isomorphic, we refer to \cite{Sauer} for a discussion about this last problem.
\subsection{Question related to infinite-dimensional spaces}
The result that $\Lip_0(\zet^d)\simeq\Lip_0(\Rea^d)$, which motivated our research, can be considered for generalization into two natural directions. Either to drop the assumption of commutativity and keep the assumption of finite-dimensionality, which is what we did when we considered the Carnot groups as finite-dimensional generalization of $\Rea^d$. Or the other way round, to keep the assumption of commutativity and drop the assumption of finite-dimensionality, which leads to the following question.
\begin{question}\label{q:netsInBanach}
Let $\Net_X$ be a net in a separable Banach space $X$. Is $\Lip_0(\Net_X)\simeq \Lip_0(X)$?
\end{question}

We do not know the answer to Question~\ref{q:netsInBanach} even for classical spaces such as $X = \ell_1$ or $X=c_0$. Note that if it was true that $\Lip_0(X)\simeq \Lip_0(\mathcal{N}_X)$ for every separable Banach space $X$, then uniformly homeomorphic Banach spaces would have isomorphic spaces of Lipschitz functions.

\begin{question}Let $\Net_X$ be a net in a separable Banach space $X$. Do we have $\Lip_0(\Net_X)\simeq \bigoplus_{\ell_\infty} \Lip_0(\Net_X)$?
\end{question}

We have been able to prove that $\Lip_0(\ell_p)\simeq \Lip_0(L_p)$ either using the `global geometric fact' that $(\ell_p,L_p)$ is a good pair (see Theorem~\ref{thm:OrliczLip}), or using the `local geometric fact' that locally $\ell_p$ and $L_p$ are the same (see Theorem~\ref{thm:scriptLp}). Both locally and globally, $L_p$ and $L_q$ for $1\leq p<q<\infty$ are geometrically quite different, therefore the following is a natural conjecture.
\begin{question}
Is $\Lip_0(L^p)\not\simeq\Lip_0(L^q)$, for $1\leq p<q<\infty$?
\end{question}
Another natural question concerns the preduals of $\Lip_0(L_p)$. For the good pairs of the form $(\Gamma, G_\infty)$, we can distinguish their free spaces, however these tools are not available for the pair $(\ell_p,L_p)$.

\begin{question}
Is $\F(\ell_p)\simeq \F(L_p)$ for every (some) $1\leq p\neq 2<\infty$?
\end{question}

\begin{question}\label{q:roleOfEll2}Let $X$ be a separable Banach space. Is it true that $\Lip_0(\ell_2)\complemented \Lip_0(X)$?
\end{question}

We do not know the answer to Question~\ref{q:roleOfEll2} even for $X=c_0$ and $X=\ell_1$.

\begin{question}\label{q:approximableByFinite}Is $\F(\ell_p)\simeq \bigoplus_{\ell_1} \F(\ell_p^n)$ for some (or even every) $1\leq p < \infty$?
\end{question}

Note that if the answer to Question~\ref{q:approximableByFinite} is positive for some $p$ then, using the argument from Remark~\ref{rem:freeIzo}, we would have $c_0\not \hookrightarrow \F(\ell_p)$, which would in the case of $p=1$ or $p=2$ answer a question raised already in \cite{CDW}.
\medskip

Our last question is related to our poor knowledge of $\F(\Rea^d)$ spaces.
Since we have proved that $\Lip_0(\Rea^d)\simeq\Lip_0(\zet^d)$, it seems to be natural to investigate the properties of Banach spaces $\Lip_0(\zet^d)$. By the already known results, it seems that those spaces share many properties of $\ell_\infty$ and so it is natural to wonder how far one can go with this. The first natural and interesting property could be the Grothendieck property. Note that every von Neumann algebra is a Grothendieck space, see \cite{P94} or \cite{PoPe10}, so a positive answer would support the intuition that Lipschitz spaces behave in many ways as von Neumann algebras.

\begin{question}\label{q:grot}Is $\Lip_0(\mathbb Z^d)$ a Grothendieck space for each $d\in\Nat$?
\end{question}

\noindent{\bf Acknowledgements.}
We would like to thank to Enrico Le Donne and Yves de Cornulier for responding our questions about Carnot groups and to Lubo\v{s} Pick and William B. Johnson for discussing with us the topic of Orlicz spaces. Moreover, we thank to the anonymous referee for many helpful remarks.

\end{document}